\documentclass{amsart}
\usepackage[utf8]{inputenc}\usepackage{amssymb}
\usepackage{amsmath}
\usepackage{graphicx}
\usepackage{manfnt}
\usepackage{amsthm}
\usepackage[mathscr]{euscript}
\usepackage{mathtools}
\usepackage{epstopdf}
\usepackage{bigints}
\usepackage{color}
\usepackage{dsfont}
\usepackage{mathabx}
\usepackage{todonotes}
\usepackage{comment}

\usepackage{tikz}
\usepackage{tikz-cd}
\usepackage{bm}
\usepackage{bbm}
\usetikzlibrary{arrows}
\usetikzlibrary{positioning}
\usepackage{mathrsfs}
\numberwithin{equation}{section}
\usepackage{hyperref}
\usepackage{enumitem}

\newcommand{\cO}{\mc{O}}

\newcommand{\N}{\mathbb{N}}
\newcommand{\C}{\mathbb{C}}
\newcommand{\Z}{\mathbb{Z}}

\newcommand{\A}{\mathbb{A}}
\newcommand{\mf}{\mathfrak}
\newcommand{\mc}{\mathcal}
\newcommand{\bp}{\begin{pmatrix}}
\newcommand{\ep}{\end{pmatrix}}

\newcommand{\g}{\mf{g}}

\newcommand{\n}{\mf{n}}
\newcommand{\h}{\mf{h}}
\newcommand{\levi}{\mf{l}}
\newcommand{\bimod}[2]{#1\mbox{-mod-}#2}
\newcommand{\gmodT}{\bimod{\mf{g}}{T}}

\DeclareMathOperator{\id}{id}

\DeclareMathOperator{\ch}{ch}
\DeclareMathOperator{\End}{End}
\DeclareMathOperator{\Hom}{Hom}
\DeclareMathOperator{\Ann}{Ann}

\DeclareMathOperator{\stab}{stab}

\DeclareMathOperator{\MT}{MT}
\DeclareMathOperator{\MTM}{MTM}
\newcommand{\realmtm}{\Upsilon}
\DeclareMathOperator{\Lie}{Lie}

\DeclareMathOperator{\im}{im}

\DeclareMathOperator{\Int}{Int}
\DeclareMathOperator{\gr}{gr}
\DeclareMathOperator{\Wh}{Wh}

\DeclareMathOperator{\coker}{coker}
\DeclareMathOperator{\can}{can}

\theoremstyle{plain}
\newtheorem{theorem}{Theorem}[subsection]
\theoremstyle{definition}
\newtheorem{definition}[theorem]{Definition}
\theoremstyle{plain}
\newtheorem{lemma}[theorem]{Lemma}
\theoremstyle{plain}
\newtheorem{proposition}[theorem]{Proposition}
\theoremstyle{remark}

\theoremstyle{plain}

\theoremstyle{remark}
\newtheorem{remark}[theorem]{Remark}
\theoremstyle{plain}

\theoremstyle{definition}



\title{Two proofs of a Jantzen Conjecture for Whittaker Modules}
\author{Jens Niklas Eberhardt, Anna Romanov}

\begin{document}

\maketitle

\begin{abstract}
We define a filtration of a standard Whittaker module over a complex semisimple Lie algebra and and establish its fundamental properties. Our filtration specialises to the Jantzen filtration of a Verma module for a certain choice of parameter. We prove that embeddings of standard Whittaker modules are strict with respect to our filtration, and that the filtration layers are semisimple. This provides a generalisation of the Jantzen conjectures to Whittaker modules. We prove these statements in two ways. First, we give an algebraic proof which compares Whittaker modules to Verma modules using a functor introduced by Backelin. Second, we give a  geometric proof using mixed twistor $\mc{D}$-modules.
\end{abstract}

\tableofcontents

\section{Introduction}

\subsection{Overview} 
In this paper, we state and prove a Jantzen conjecture for Whittaker modules, generalising a famous theorem of Beilinson--Bernstein. We provide two proofs --- one algebraic (relating Whittaker modules to Verma modules in category $\mc{O}$), and one geometric (using mixed twistor $\mc{D}$-modules). 

The Jantzen conjectures for Verma modules are a classical result in the representation theory of semisimple Lie algebras. They establish that a certain algorithmically-defined filtration of a Verma module satisfies remarkable functoriality and semisimplicity properties. The conjectures were proven by Beilinson--Bernstein in \cite{BBJantzen} using mixed structures on categories of holonomic $\mc{D}$-modules. 

We define an analogous filtration on a standard Whittaker module. Our main result (Theorem \ref{thm: Jantzen conjecture Whittaker}) is that our filtration satisfies the same functoriality and semisimplicity properties as the Jantzen filtration of a Verma module. Our theorem contains the Jantzen conjectures for Verma modules as a special case. 

 This paper completes a program set out in \cite{Romanov, BrownRomanov, BRpairings} to establish which results from category $\mc{O}$ can be lifted to the Whittaker category $\mc{N}$. Previous work on Whittaker modules is based on two main approaches. The first approach is algebraic, relating Whittaker modules to category $\mc{O}$, then using known results about highest weight modules to deduce properties of Whittaker modules. The second approach is geometric, using Beilinson--Bernstein localisation to relate Whittaker modules to twisted equivariant $\mc{D}$-modules on the associated flag variety. In this paper we use both approaches, developing significant extensions of existing techniques in each setting. One notable aspect of our geometric proof is that it relies on the machinery of mixed twistor $\mc{D}$-modules developed in  \cite{sabbahPolarizableTwistorMathcal2005, sabbahWildTwistorMathcal2009,mochizukiWildHarmonicBundles2011,mochizukiMixedTwistorDmodules2015}. Mixed twistor $\mc D$-modules can be seen as an instance of Simpson's meta theorem \cite{simpsonMixedTwistorStructures1997} applied to Saito's mixed Hodge modules \cite{Saito1,Saito2}. Our work provides one of the first applications of this technology to geometric representation theory.

In the remainder of the introduction, we explain our results in more detail.

\subsection{Whittaker modules}
\label{sec: Whittaker modules intro}
Our theorem deals with a class of representations called standard Whittaker modules. In this section, we describe their construction and basic properties. 

Let $\mf{g}$ be a complex semisimple Lie algebra, and $\mf{h} \subset \mf{b} \subset \mf{g}$ fixed Cartan and Borel subalgebras. Let $\mf{n}=[\mf{b},\mf{b}]$ be the nilpotent radical of $\mf{b}$. An {\em $\eta$-Whittaker module}\footnote{We drop the $\eta$ and use the term {\em Whittaker module} when the character $\eta$ is clear from context.} is a $\mf{g}$-module which is cyclically generated by a vector on which $\mf{n}$ acts by a character $\eta:\mf{n} \rightarrow \C$.

When $\eta$ is the zero character, Verma modules, highest weight modules, and simple finite-dimensional $\mf{g}$-modules satisfy this condition, providing examples of $0$-Whittaker modules. At the other extreme, when $\eta$ is nondegenerate (meaning that it does not vanish on any simple root space in $\mf{n}$) Kostant showed that $\eta$-Whittaker modules have an elementary description: simple nondegenerate Whittaker modules are uniquely determined by their infinitesimal character, and equivalence classes of nondegenerate Whittaker modules are in bijection with ideals in the center of the universal enveloping algebra of $\g$ \cite{Kostant}.

For partially degenerate $\eta$, \cite{McDowell} provided a classification of simple $\eta$-Whittaker modules as unique irreducible quotients of {\em standard Whittaker modules}, which are defined as follows. Let $\Pi_\eta$ be the subset of simple roots for which $\eta$ does not vanish on the corresponding root space. The set $\Pi_\eta$ determines a Levi subalgebra and parabolic subalgebra\footnote{See \S \ref{sec: definitions}-\ref{sec: standard/costandard modules and contravariant pairings} for precise definitions and $\rho$-shift conventions.} $\mf{l}_\eta\subset \mf{p}_\eta \subset \g$. Denote by $Z(\mf{l}_\eta)\subset U(\mf{l}_\eta)$ the center of the universal enveloping algebra of $\mf{l}_\eta$ and $\mf{n}_\eta =  \mf{n} \cap \mf{l}_\eta$. Let $\rho, \rho_\eta \in \h^*$ be the half-sum of positive roots of $\g$ and $\mf{l}_\eta$, respectively. For $\lambda \in \h^*$, the corresponding standard Whittaker module is 
\begin{equation}
    \label{eq: standard Whittaker}
M(\lambda, \eta):= U(\g) \otimes_{U(\mf{p}_\eta)} U(\mf{l}_\eta) \otimes_{Z(\mf{l}_\eta) \otimes U(\mf{n}_\eta)} \C_{\lambda - \rho + \rho_\eta, \eta}.
\end{equation}
Here $\C_{\lambda - \rho + \rho_\eta}$ is the one-dimensional $Z(\mf{l}_\eta) \otimes U(\mf{n}_\eta)$-module on which $Z(\mf{l}_\eta)$ acts by the infinitesimal character corresponding to $\lambda - \rho + \rho_\eta$, and $U(\mf{n}_\eta)$ acts by $\eta$. 

When $\eta=0$, the standard Whittaker module $M(\lambda, 0)$ in \eqref{eq: standard Whittaker} is the Verma module of highest weight $\lambda - \rho$, and the corresponding simple Whittaker module is its unique irreducible quotient. When $\eta$ is non-degenerate, the standard Whittaker module in \eqref{eq: standard Whittaker} is irreducible for all choices of $\lambda$. For partially degenerate $\eta$, the standard Whittaker module interpolates between these two extremes. 

McDowell's classification scheme bears a strong resemblance to the classification of simple highest weight modules in category $\mc{O}$, and even includes it as a special case. Motivated by this analogy, Mili{\v{c}}i{\'c}--Soergel initiated a categorical approach to Whittaker modules in \cite{CompositionSeries}, defining a category $\mc{N}$ whose simple objects are precisely the simple Whittaker modules. Working within this category, Mili{\v{c}}i{\'c}--Soergel and Backelin \cite{Backelin} provided a description of composition series multiplicities of standard Whittaker modules in terms of parabolic Kazhdan--Lusztig polynomials, further demonstrating the similarities between the categories $\mc{O}$ and $\mc{N}$.  

In \cite{TwistedSheaves}, Mili{\v{c}}i{\'c}--Soergel proposed a geometric approach to the study of category $\mc{N}$, using Beilinson--Bernstein localisation to relate Whittaker modules to twisted equivariant $\mc{D}$-modules on the flag variety. This approach allowed results about $\mc{N}$ to be proven directly, instead of by reduction to known results in $\mc{O}$, which was the proof strategy in \cite{CompositionSeries, Backelin}. The geometric approach was further developed in \cite{Romanov} and \cite{Zhao} where a geometric algorithm for computing composition multiplicities was established that provides a direct $\mc{D}$-module proof of the results in \cite{CompositionSeries, Backelin}. 

The parallel algebraic and geometric approaches to studying category $\mc{N}$ have demonstrated many similarities with category $\mc{O}$. However, there are two major departures from highest weight theory which provide obstacles to furthering the study of Whittaker modules:
\begin{enumerate}
    \item Unlike Verma modules, standard Whittaker modules admit many distinct contravariant forms \cite{BrownRomanov}. This makes it difficult to generalise arguments about Verma modules which rely on the uniqueness of the Shapovalov form \cite{Shapovalov}. 
    \item The $\mc{D}$-modules corresponding to Whittaker modules have irregular singularities \cite{TwistedSheaves}. This prevents the use of the Riemann--Hilbert correspondence to compare Whittaker modules to categories of perverse sheaves, and, in particular, it removes the possibility of studying them using the mixed $\ell$-adic sheaves of \cite{Deligne} or the mixed Hodge modules of \cite{Saito1, Saito2}. 
\end{enumerate}

The obstacles above made one fundamental result in category $\mc{O}$ seem especially out of reach - the Jantzen conjectures. The Jantzen conjectures describe the properties of the Jantzen filtration of a Verma module, which is defined using the unique Shapovalov form. By obstacle (1) above, there is no clear generalisation of this filtration to standard Whittaker modules since there is no unique contravariant form. Moreover, the proof of the Jantzen conjectures by Beilinson--Bernstein in \cite{BBJantzen} relies on passage to categories of mixed $\ell$-adic sheaves or mixed Hodge modules, so obstacle (2) eliminates the possibility of a direct generalisation. 

In this paper, we explain how one can circumvent obstacles (1) and (2) to both state and prove a Jantzen conjecture for Whittaker modules. We use two key tools. The first is the unique contravariant pairing between a standard Whittaker module and a Verma module introduced in \cite{BRpairings}, which gives us an appropriate substitute for the Shapovalov form. This allows us to overcome obstacle (1). The second is the theory of mixed twistor $\mc{D}$-modules \cite{mochizukiMixedTwistorDmodules2015}, which encapsulates holonomic $\mc{D}$-modules with irregular singularities and serves as a replacement for mixed Hodge modules. This allows us to overcome obstacle (2). 

We describe our generalisation of the Jantzen conjectures in more detail in the following section.

\subsection{A Jantzen conjecture}
\label{sec: A Jantzen conjecture intro}

A key result in \cite{BRpairings} is that blocks of category $\mc{N}$ have the structure of highest weight categories. In particular, for each $\lambda \in \h^*$ and $\mf{n}$-character $\eta$, there exists a standard Whittaker module $M(\lambda, \eta)$, a costandard module $M^\vee(\lambda, \eta)$, and a canonical map 
\[
\psi: M(\lambda, \eta) \rightarrow M^\vee(\lambda, \eta)
\]
in category $\mc{N}$. The canonical map has an explicit realisation in terms of the unique contravariant pairing between $M(\lambda, \eta)$ and the Verma module $M(\lambda)$ (Proposition \ref{prop: properties of costandards}). 

To define the Jantzen filtration, we need a deformation of the above set-up in a specified direction $\gamma \in \h^*$. Let $T=\mc{O}(\C \gamma)$ be the ring of regular functions on the line $\C \gamma \subset \h^*$. Identify $T$ with the polynomial ring $\C[s]$, and set $A = T_{(s)}$ to be the localisation of $T$ with respect to the prime ideal $(s)$. In \S\ref{sec: deformed modules}, we define deformed standard and costandard Whittaker modules, which are $(\g,A)$-bimodules admitting a canonical $(\g, A)$-bimodule homomorphism 
\[
\psi_A: M_A(\lambda, \eta) \rightarrow M_A^\vee(\lambda, \eta).
\]
Setting $s=0$ recovers the non-deformed set-up above, and setting $\eta = 0$ recovers the canonical map from deformed Verma modules to deformed dual Verma modules studied in \cite{Soergel}. 

The deformed module $M^\vee_A(\lambda, \eta)$ has a $(\g,A)$-module filtration given by powers of $s$, and the {\em Jantzen filtration} of $M_A(\lambda, \eta)$ is defined to be the filtration obtained by pulling back this ``powers of $s$'' filtration along the canonical map $\psi_A$:
\[
M_A(\lambda, \eta)_{-i}=\{ m \in M_A(\lambda, \eta) \mid \psi_A(m) \in M_A^\vee(\lambda, \eta)s^i\}.
\]
Setting $s=0$, we recover a filtration of $M(\lambda, \eta)$ which we call the Jantzen filtration. When $\eta=0$, this specialises to the classical Jantzen filtration of a Verma module defined in \cite{Jantzen} using the Shapovalov form.

\begin{remark} (Computability of Jantzen filtration) Because the Jantzen filtration is defined in terms of an explicitly defined contravariant pairing (see (5.1) in \cite{BRpairings}), the Jantzen filtration is directly computable. In contrast, other representation-theoretic filtrations such as the composition series are known to exist, but are difficult to compute algorithmically.
\end{remark}

Despite being computable, it is unclear from the definition what properties the Jantzen filtration should satisfy.  Jantzen conjectured that for Verma modules, his filtration satisfies strong functoriality and semisimplicity properties. In particular, the {\em Jantzen conjectures} are the statements (posed as questions in \cite[\S5.17]{Jantzen}) that for $\gamma = \rho$, embeddings of Verma modules are strict\footnote{By {\em strict} we mean that if $\iota: M(\mu) \hookrightarrow M(\lambda)$ is an embedding, the  filtration on $\iota(M(\mu))$ induced from the Jantzen filtration on $M(\lambda)$ agrees with the Jantzen filtration on $M(\mu)$ up to a shift.} for Jantzen filtrations, and the Jantzen filtration coincides with the socle filtration. In particular, the filtration layers of the Jantzen filtration of a Verma module are semisimple. 

In the years following Jantzen's conjectures, it became clear that the Jantzen filtration carries fundamental information about Verma modules and category $\mc{O}$. Work of Barbasch \cite{Barbasch}, Gabber-Joseph \cite{GJ}, and others showed that the Jantzen conjectures imply the Kazhdan--Lusztig conjectures for Verma modules. In fact, they imply a stronger statement, relating multiplicities of simple modules in layers of the Jantzen filtration to coefficients of Kazhdan--Lusztig polynomials \cite{GJ}. 

The Jantzen conjectures for Verma modules were proven by Beilinson--Bernstein in \cite{BBJantzen} using weight theory and mixed $\ell$-adic sheaves. Twenty years later, Williamson provided an algebraic proof using Soergel bimodules \cite{Williamson}, building on earlier work of K\"ubel and Soergel \cite{Kubel1, Kubel2, Soergel}. Williamson's proof also demonstrated that the Jantzen filtration depends on the deformation parameter $\gamma \in \h^*$, answering a longstanding question of Deodhar.  

The main theorem of this paper (which appears as Theorem \ref{thm: Jantzen conjecture Whittaker} in the body of the text) is a generalisation of Jantzen's conjectures to standard Whittaker modules, using the Jantzen filtration of $M(\lambda, \eta)$ defined above. 

\begin{theorem}
\label{thm: Jantzen conjecture introduction} 
   {\em  (The Jantzen conjectures for Whittaker modules) }
    \begin{enumerate}[label=(\roman*)]
        \item (Hereditary property) Embeddings of standard Whittaker modules are strict for Jantzen filtrations. 
        \item (Semisimplicity property) The filtration layers of the Jantzen filtration of $M(\lambda, \eta)$ are semisimple. 
    \end{enumerate}
\end{theorem}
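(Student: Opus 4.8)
The plan is to deduce both statements from the already-known Jantzen conjectures for Verma modules (Beilinson--Bernstein \cite{BBJantzen}, or Williamson \cite{Williamson}) by transporting everything through Backelin's functor $\Gamma_\eta\colon \mc{O}\to\mc{N}$. Recall that $\Gamma_\eta$ is exact and additive, sends the Verma module $M(\lambda)$ to the standard Whittaker module $M(\lambda,\eta)$, and sends each simple $L(\lambda)$ either to a simple Whittaker module or to zero; in particular it preserves semisimplicity. The first step is to upgrade $\Gamma_\eta$ to the deformed setting: I would show that it extends to a functor on $(\mf g,A)$-bimodules that commutes with the base change $-\otimes_A A/sA$, carries the deformed Verma module $M_A(\lambda)$ to $M_A(\lambda,\eta)$ and the deformed dual Verma $M_A^\vee(\lambda)$ to $M_A^\vee(\lambda,\eta)$, and identifies the canonical map $\psi_A\colon M_A(\lambda)\to M_A^\vee(\lambda)$ with $\psi_A\colon M_A(\lambda,\eta)\to M_A^\vee(\lambda,\eta)$. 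This is the technical heart of the algebraic argument; the crucial point is that the contravariant pairing of \cite{BRpairings} between $M_A(\lambda,\eta)$ and $M_A(\lambda)$ is compatible with $\Gamma_\eta$, so that $\Gamma_\eta$ carries the ``powers of $s$'' filtration on $M_A^\vee(\lambda)$ onto the one on $M_A^\vee(\lambda,\eta)$.

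Granting this compatibility, the rest is formal. Since $\Gamma_\eta$ is exact it commutes with the operation of pulling back a submodule along $\psi_A$, so it takes the Jantzen filtration of $M_A(\lambda)$ term-by-term to the Jantzen filtration of $M_A(\lambda,\eta)$, and hence (after setting $s=0$) the Jantzen filtration of $M(\lambda)$ to that of $M(\lambda,\eta)$. Part (ii) then follows immediately: the layers $\gr_i M(\lambda,\eta)=\Gamma_\eta(\gr_i M(\lambda))$ are semisimple because the layers of the Verma Jantzen filtration are semisimple and $\Gamma_\eta$ preserves semisimplicity. For part (i) I would use that $\Gamma_\eta$ induces an isomorphism $\Hom_{\mc O}(M(\mu),M(\lambda))\xrightarrow{\ \sim\ }\Hom_{\mc N}(M(\mu,\eta),M(\lambda,\eta))$, so every embedding of standard Whittaker modules is $\Gamma_\eta$ of an embedding of Vermas, and then apply exactness of $\Gamma_\eta$ to the strictness statement for Vermas. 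One point needing care is the reduction to a regular integral block, where the Verma-module input is cleanest: one should check that the Jantzen filtration and $\Gamma_\eta$ are both compatible with translation functors and parabolic induction so that the case of general $\lambda$ follows.

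For the second, geometric proof advertised in the abstract, I would argue as follows. Via the localisation of Mili\v{c}i\'c--Soergel \cite{TwistedSheaves}, $M(\lambda,\eta)$ becomes a standard object in a category of $\eta$-twisted $N$-equivariant $\mc D$-modules on the flag variety --- the direct image of a rank-one connection from an affine $N$-orbit --- and these are holonomic but have irregular singularities. Following Beilinson--Bernstein's strategy, one realises the Jantzen filtration via a one-parameter deformation (nearby cycles), and wants to identify it with the weight filtration of a mixed structure. Since the objects are irregular, mixed Hodge modules are unavailable, and this is precisely where mixed twistor $\mc D$-modules \cite{mochizukiMixedTwistorDmodules2015} enter: one must show that the standard and costandard Whittaker $\mc D$-modules underlie mixed twistor $\mc D$-modules, that the canonical map is a morphism of such, and that the resulting weight filtration agrees with the Jantzen filtration up to shift. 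Given this, parts (i) and (ii) are immediate from the weight formalism, since morphisms of mixed twistor $\mc D$-modules are strict for the weight filtration and pure twistor $\mc D$-modules are semisimple.

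I expect the main obstacle, in both proofs, to be the same: controlling the interaction of the Whittaker twist $\eta$ with the deformation machinery. Algebraically, this means proving that $\Gamma_\eta$ intertwines the deformed canonical maps and the contravariant pairings (rather than merely the underlying modules). Geometrically, it means showing that the exponential twist by $\eta$ can be incorporated into the mixed-twistor formalism and that the operations needed (direct image from the orbit, and the one-parameter deformation) preserve mixed twistor $\mc D$-modules, together with the identification of the weight filtration with the algebraically-defined Jantzen filtration. Once either of these compatibility statements is in hand, the functoriality and semisimplicity conclusions follow cheaply from exactness of $\Gamma_\eta$, respectively from strictness of weight filtrations.
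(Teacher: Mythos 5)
Your proposal matches the paper's strategy essentially verbatim: the algebraic argument via a deformed Backelin functor carrying $M_A(\lambda)\mapsto M_A(\lambda,\eta)$, $M_A^\vee(\lambda)\mapsto M_A^\vee(\lambda,\eta)$ and intertwining the canonical maps (hence the Jantzen filtrations), with part (ii) from exactness plus preservation of semisimplicity and part (i) from the quotient-functor property of $\overline{\Gamma}_\eta$, is exactly the paper's first proof, and your sketch of the mixed twistor $\mc{D}$-module route is exactly its second. The only minor divergence is that the paper avoids your translation-functor reduction by instead choosing a representative $\lambda$ in its $W_\eta$-orbit for which the $\mf{l}_\eta$-Verma module is simple.
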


When $\eta=0$, Theorem \ref{thm: Jantzen conjecture introduction} recovers the Jantzen conjectures for Verma modules. We provide two proofs of Theorem \ref{thm: Jantzen conjecture introduction}. In the following sections, we outline the strategies of each proof.

\subsection{An algebraic approach}
\label{sec: An algebraic approach intro}

Our first approach to Theorem \ref{thm: Jantzen conjecture introduction} uses a functor defined by Backelin to relate Whittaker modules to category $\mc{O}$. The functor is constructed as follows. For any $\mf{g}$-module $M$, denote by 
\[
\Gamma_\eta(M) = \{m \in M \mid (X - \eta(X))^k m = 0 \text{ for all }X \in \n \text{ and } k\gg0\}
\]
the set of generalised $\eta$-Whittaker vectors in $M$. This set is $\g$-stable \cite[Lem. 6.1]{BRpairings}. If $M = \bigoplus_{\mu \in \h^*} M_\mu$ is a weight module, then define the completion of $M$ to be the direct product of weight spaces:
\[
\overline{M} = \prod_{\mu \in \h^*} M_\mu.
\]
In \cite{Backelin}, Backelin defines a functor 
\[
\overline{\Gamma}_\eta: \mc{O} \rightarrow \mc{N}; M \mapsto \Gamma_\eta(\overline{M}). 
\]
Backelin's functor is exact and sends Verma modules to standard Whittaker modules \cite[Prop. 6.9]{Backelin}, so it is a useful tool in using relating properties of standard Whittaker modules to properties of Verma modules. 

The approach of our proof is to show that $\overline{\Gamma}_\eta$ sends the Jantzen filtration of a Verma module to the Jantzen filtration of a standard Whittaker module. This allows us to use the Jantzen conjecture for Verma modules to deduce the Jantzen conjecture for Whittaker modules. In particular, the semisimplicity in Theorem \ref{thm: Jantzen conjecture introduction}.(ii) follows immediately from the semisimplicity of filtration layers in category $\mc{O}$, and the hereditary property in Theorem \ref{thm: Jantzen conjecture introduction}.(i) follows from the fact that $\overline{\Gamma}_\eta$ is a quotient functor  when restricted to blocks of $\mc{O}$ \cite[Cor. 2]{AriasBackelin}. 

To show that $\overline{\Gamma}_\eta$ matches Jantzen filtrations, we need to define a deformed version of $\overline{\Gamma}_\eta$ and establish its properties. We do this in \S\ref{sec: deformed Backelin}. The key result is Theorem \ref{thm: deformed Vermas go to standard Whittakers}, which demonstrates that the deformed Backelin functor sends certain deformed Verma modules (those for which the corresponding $\mf{l}_\eta$-Verma module is simple) to deformed Whittaker modules. Our proof relies on a factorisation of the functor $\overline{\Gamma}_\eta$ into a non-degenerate piece and a zero piece. Combining this with a careful analysis of partial weight spaces in deformed Verma modules, we are able to deduce the theorem.

\subsection{A geometric approach}
\label{sec: a geometric approach intro}
Our second proof of Theorem \ref{thm: Jantzen conjecture introduction} is a direct proof using $\mc{D}$-modules. This proof generalises Beilinson--Bernstein's original approach in \cite{BBJantzen} by replacing mixed $\ell$-adic sheaves with mixed twistor $\mc{D}$-modules. We explain the strategy below.

An essential ingredient in the construction of Whittaker $\mc{D}$-modules is the exponential $\mc{D}$-module (see Remark \ref{rem: the exponential D-module}), which is holonomic but not regular. This prevents us from using the Riemann--Hilbert correspondence to compare Whittaker $\mc{D}$-modules to constructible sheaves and their mixed upgrades. Instead, we use the mixed twistor $\mc{D}$-modules introduced by Sabbah and Mochizuki, which encapsulate $\mc{D}$-modules with irregular singularities.

Like their mixed Hodge module counterparts, mixed twistor $\mc{D}$-modules carry weight filtrations with strong functoriality and semisimplicity properties, similar to those in the Jantzen conjectures. Moreover, for mixed twistor $\mc{D}$-modules constructed using maximal extension functors, the weight filtration agrees with the monodromy filtration, which is explicitly computable. The strategy of our proof is to show that the $\mc{D}$-modules corresponding to standard Whittaker modules fit into this framework, and thus the Jantzen filtration inherits the properties of weight filtrations.

Our first step is to upgrade the geometric results of \cite{TwistedSheaves, Romanov}, which use twisted sheaves of differential operators on the flag variety, to $H$-monodromic $\mc{D}$-modules on base affine space. We do this in \S \ref{sec: Whittaker D-modules}. Once in the setting of base affine space, we are able to realise the $\mc{D}$-modules corresponding to standard Whittaker modules as subsheaves of maximal extension functors. This endows them with a monodromy filtration.

Next, we argue that the global sections of the monodromy filtration align with the Jantzen filtration. A key component is recognising the geometric incarnation of the deformed standard Whittaker module which arises in the construction of the maximal extension functor. This is Theorem \ref{thm: deformed Vermas go to standard Whittakers}. Because this critical step is given minimal justification in \cite{BBJantzen}, we have included a careful argument that reduces to Verma modules when $\eta=0$. 

Finally, we show that Whittaker $\mc{D}$-modules admit a mixed twistor structure, and hence carry a weight filtration. This allows us to use results in \cite{mochizukiMixedTwistorDmodules2015} to establish that the weight filtration agrees with the monodromy filtration on Whittaker $\mc{D}$-modules. This completes the proof: the functoriality of the weight filtration implies the hereditary property of the Jantzen filtration (Theorem \ref{thm: Jantzen conjecture introduction}.(i)) and the semisimplicity of filtration layers of the weight filtration implies Theorem \ref{thm: Jantzen conjecture introduction}.(ii).

\subsection{Structure of the paper}
\label{sec: Structure of the paper intro}

This paper is structured as follows. 

\S\ref{sec: definitions}: We define Whittaker modules and Mili{\v{c}}i{\'c}--Soergel's category $\mc{N}$. 

\S\ref{sec: standard/costandard modules and contravariant pairings}: We define standard and costandard Whittaker modules and record their basic properties.

\S\ref{sec: deformed modules}: We define deformed category $\mc{O}$, deformed Verma/dual Verma modules, and deformed standard/costandard Whittaker modules. 

\S\ref{sec: two algebraic Jantzen filtrations}: We define the Jantzen filtration of Verma modules and standard Whittaker modules. First, it is necessary to extend the results in \cite{BRpairings} about contravariant pairings to the deformed setting. We state the Jantzen conjecture for Whittaker modules. 

\S\ref{sec: deformed Backelin}: We define a deformed version of Backelin's functor and establish its basic properties. This involves a careful analysis of the set of Whittaker vectors in completed deformed Whittaker modules. The main result is Theorem \ref{thm: deformed Vermas go to standard Whittakers}, which establishes that the deformed Backelin functor sends deformed Verma modules to deformed standard Whittaker modules. 

\S\ref{sec: Jantzen conjecture via category O}: We prove that the Backelin functor sends the Jantzen filtration of a Verma module to the Jantzen filtration of the corresponding standard Whittaker module. This allows us to prove the Jantzen conjecture algebraically.

\S\ref{sec: preliminaries}: We set our $\mc{D}$-module conventions and record  basic facts about functors. We define a map from the extended universal enveloping algebra $\widetilde{U}$ to global differential operators on base affine space $\widetilde{X}$ which gives the global sections of $\mc{D}_{\widetilde{X}}$-modules the structure of $\widetilde{U}$-modules. We introduce monodromic $\mc{D}$-modules, and explain their role in our story. 

\S\ref{sec: Whittaker D-modules}: We briefly recall the geometric approach to Whittaker modules set out in \cite{TwistedSheaves, Romanov}, then explain how this can be formulated in terms of monodromic $\mc{D}$-modules. 

\S\ref{sec: a geometric Jantzen filtration via monodromy}: We define the monodromy filtration and explain how it can be used to provide a filtration on $\mc{D}$-modules obtained via maximal extension functors. We explain how Whittaker $\mc{D}$-modules can be made to fit into this framework. 

\S\ref{sec: relationship between algebraic and geometric Janten filtrations}: We show that the global sections functor sends the monodromy filtration on a Whittaker $\mc{D}$-module to the Jantzen filtration on the standard Whittaker module. 

\S\ref{sec: background on mixed twistor modules}: We establish background and conventions for mixed twistor $\mc{D}$-modules. 

\S\ref{sec: proof of the Jantzen conjecture}: We provide our geometric proof of the Jantzen conjecture. 

\subsection{Acknowledgements}
This paper completes a project which began with the papers \cite{BrownRomanov} and \cite{BRpairings}. The second author would like to thank Adam Brown for the many hours spent together contemplating forms, duality, and Jantzen filtrations. It finally makes sense! In the process of writing this paper, three new humans entered the world. We would like to thank Juniper, Tommi, and baby sister for keeping us grounded in reality and giving us time at 2am to think about math.  The first author was supported by Deutsche Forschungsgemeinschaft (DFG), project number 45744154, Equivariant $K$-motives and Koszul duality.

\section{Whittaker modules} \label{sec: Whittaker modules}
\subsection{Definitions} \label{sec: definitions}

Let $\mf{g}$ be a semisimple complex Lie algebra and $\mf{b}$ a fixed Borel subalgebra of $\mf{g}$. Denote by $\mf{n} = [\mf{b}, \mf{b}]$ the nilpotent radical of $\mf{b}$, and $\mf{h}$ a Cartan subalgebra such that $\mf{b} = \mf{h} \oplus \mf{n}$. Set $\bar{\n}$ to be the nilpotent radical of the opposite Borel subalgebra $\bar{\mf{b}}$. 

Let $\Pi \subseteq \Sigma^+ \subseteq \Sigma \subseteq \mf{h}^*$ be the sets of simple and positive roots in the root system of $\mf{g}$ determined by our choice of $\mf{b}$, and let $(W, S)$ be the corresponding Coxeter system. Denote by $\mf{g}_\alpha$ the root space corresponding to $\alpha \in \Sigma$, so $\mf{n} = \bigoplus_{\alpha \in \Sigma^+} \mf{g}_\alpha$\footnote{Our convention that root spaces in $\mf{n}$ correspond to positive roots is the opposite to that in \cite{BBJantzen}, where $\mf{n}$ is comprised  of negative root spaces. Due to this choice, results in \cite{BBJantzen} which are stated for dominant $\lambda \in \mf{h}^*$ hold for antidominant $\lambda \in \mf{h}^*$ in this paper. This becomes relevant in Section \ref{sec: D-modules}.} Set $\rho \in \h^*$ to be the half-sum of positive roots. 

For $\alpha \in \Sigma$, denote by $\alpha^\vee \in \Sigma^\vee$ the corresponding coroot in the dual root system. We say that a weight $\lambda \in \mf{h}^*$ is {\em regular} if $\alpha^\vee(\lambda) \neq 0$ for any $\alpha \in \Sigma$, {\em integral} if $\alpha^\vee(\lambda) \in \Z$ for all $\alpha \in \Sigma$, and {\em antidominant} if $\alpha^\vee(\lambda) \not \in \mathbb{Z}_{>0}$.

Let $\ch\mf{n}$ be the set of Lie algebra homomorphisms $\eta: \mf{n} \rightarrow \C$. For any Lie algebra $\mf{a}$, denote by $U(\mf{a})$ its universal enveloping algebra and $Z(\mf{a})$ the center of $U(\mf{a})$. Any character $\eta \in \ch \mf{n}$ can be extended uniquely to an algebra homomorphism $\eta: U(\mf{n}) \rightarrow \C$ which we call by the same name. 

\begin{definition}
\label{def: Whittaker module}
Let $\eta \in \ch \mf{n}$. An {\em $\eta$-Whittaker vector} in a $\mf{g}$-module $M$ is a vector $m \in M$ such that $X \cdot m = \eta(X) m$ for all $X \in \mf{n}$. A {\em Whittaker module} is a $\mf{g}$-module which is cyclically generated by an $\eta$-Whittaker vector for some $\eta \in \ch{\mf{n}}$. 
\end{definition}

In \cite{CompositionSeries}, Mili\v{c}i\'{c}--Soergel introduced a category whose simple objects are the simple Whittaker modules. 

\begin{definition}
\label{def: Whittaker category}
The {\em Whittaker category} $\mc{N}$ is the category of finitely generated $U(\mf{g})$-modules which are $U(\mf{n})$-finite and $Z(\mf{g})$-finite. 
\end{definition}

The Whittaker category has a block decomposition given by the finiteness conditions in Definition \ref{def: Whittaker category}. For $\lambda \in \mf{h}^*$, set
\begin{equation}
\label{eq: inf char for g}
\chi_\lambda: Z(\mf{g}) \rightarrow \C; z \mapsto (\lambda - \rho)p(z),
\end{equation}
where 
\begin{equation}
\label{eq: HC hom}
p:Z(\mf{g}) \rightarrow U(\mf{h})
\end{equation}
is the Harish-Chandra homomorphism, defined as projection onto the first coordinate in the direct sum decomposition 
\begin{equation}
U(\mf{g}) = U(\mf{h}) \oplus(\bar{\n} U(\g) + U(\g) \n).     
\end{equation}
Under these conventions, $\chi_\lambda = \chi_\mu$ if and only if $\mu=w\lambda$ for some $w \in W$, where $w\lambda$ denotes the usual (not dot) action of $W$ on $\h^*$. Set $\theta = W \lambda$ to be the Weyl group orbit of $\lambda$ in $\mf{h}^*$. Then $J_\theta := \ker \chi_\lambda$ is a maximal ideal in $Z(\mf{g})$. Set $\mc{N}(\theta)$ and $\mc{N}(\hat{\theta})$ to be the full subcategories of $\mc{N}$ consisting of modules annihilated by $J_\theta$ and some power of $J_\theta$, respectively. The objects in  $\mc{N}(\theta)$  (resp. $\mc{N}(\hat{\theta})$) are the modules in $\mc{N}$ with infinitesimal character (resp. generalized infinitesimal character) $\chi_\lambda$. 

For a character $\eta \in \ch \mf{n}$, set $\mc{N}(\eta)$ to be the full subcategory of $\mc{N}$ consisting of modules $V$ satisfying 
\[
V=\{v \in V \mid (X - \eta(X))^k v = 0, X \in \mf{n}, \text{ for some }k \in \mathbb{N}\}. 
\]
Set $\mc{N}(\hat{\theta}, \eta) = \mc{N}(\hat{\theta}) \cap \mc{N}(\eta)$ and $\mc{N}(\theta, \eta) = \mc{N}(\theta) \cap \mc{N}(\eta)$. Clearly every simple Whittaker module is in some $\mc{N}(\theta, \eta)$. 

\begin{proposition}
\label{prop: block decomposition}
(\cite{CompositionSeries, TwistedSheaves}) The category $\mc{N}$ decomposes 
\[
\mc{N} = \bigoplus_{\theta \in W \backslash \mf{h}^*} \bigoplus_{\eta \in \ch{\mf{n}}} \mc{N}(\hat{\theta}, \eta).
\]
\end{proposition}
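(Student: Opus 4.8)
The plan is to prove Proposition~\ref{prop: block decomposition} by combining two independent decompositions: the one coming from the $Z(\mf{g})$-action and the one coming from the $\mf{n}$-action. Since every object of $\mc{N}$ is by definition $Z(\mf{g})$-finite and $U(\mf{n})$-finite, the strategy is to show that each object decomposes canonically as a direct sum of submodules indexed by a generalized central character $\hat\theta$ and by a character $\eta \in \ch\mf{n}$, and that there are no morphisms between summands with distinct indices.

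First I would handle the central character decomposition. For $V \in \mc{N}$, the image of $Z(\mf{g})$ in $\End_\C(V)$ is a finite-dimensional commutative algebra (here one uses that $V$ is finitely generated over $U(\mf{g})$ together with $Z(\mf{g})$-finiteness to see that $Z(\mf{g})$ acts through a finite-dimensional quotient), so $V$ breaks up as the direct sum of its generalized eigenspaces $V_{\hat\theta}$ for the maximal ideals $J_\theta \subset Z(\mf{g})$ appearing in its support. Each $V_{\hat\theta}$ is a $\mf{g}$-submodule because $Z(\mf{g})$ is central, and it lies in $\mc{N}(\hat\theta)$. Morphisms in $\mc{N}$ are $U(\mf{g})$-linear, hence $Z(\mf{g})$-linear, so they preserve generalized eigenspaces; this gives $\mc{N} = \bigoplus_{\theta} \mc{N}(\hat\theta)$ as categories.

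Next I would run the parallel argument for $\mf{n}$. Here the subtlety is that $\mf{n}$ is not central, so one cannot literally decompose $V$ into generalized eigenspaces for the $\mf{n}$-action and expect $\mf{g}$-stability. Instead, I would invoke (or reprove) the fact that $\Gamma_\eta(V) = \{v \in V \mid (X-\eta(X))^k v = 0 \ \forall X \in \mf{n}, k \gg 0\}$ is a $\mf{g}$-submodule of $V$ --- this is exactly \cite[Lem.~6.1]{BRpairings}, cited in the excerpt. Given this, for $V \in \mc{N}$ one shows $V = \bigoplus_{\eta} \Gamma_\eta(V)$: the $U(\mf{n})$-finiteness of $V$ forces the set of $\eta$ occurring to be finite, and a $U(\mf{n})$-module which is locally finite decomposes as the direct sum of its generalized $\eta$-weight spaces (the characters $\eta$ of the nilpotent $\mf{n}$ playing the role of weights, using that $[\mf{n},\mf{n}]$ acts nilpotently so only characters trivial on $[\mf{n},\mf{n}]$ --- i.e.\ elements of $\ch\mf{n}$ --- can occur, and distinct such characters give a direct sum by the usual generalized-eigenspace argument for the commuting action of $\mf{n}/[\mf{n},\mf{n}]$). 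Again morphisms in $\mc{N}$ are $U(\mf{n})$-linear, hence respect the $\Gamma_\eta$-summands, giving $\mc{N} = \bigoplus_\eta \mc{N}(\eta)$.

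Finally I would combine the two: the two decompositions are by commuting idempotent-valued projections (the $Z(\mf{g})$-projections are $U(\mf{g})$-linear, hence commute with the $\Gamma_\eta$-projections, and vice versa), so applying both simultaneously yields $V = \bigoplus_{\theta,\eta} V_{\hat\theta} \cap \Gamma_\eta(V)$ with each summand in $\mc{N}(\hat\theta,\eta)$, and no morphisms between distinct summands. It remains to check the summands are genuinely objects of $\mc{N}$ (finitely generated --- true since they are direct summands of a finitely generated module over the Noetherian ring $U(\mf{g})$, and they inherit $Z(\mf{g})$- and $U(\mf{n})$-finiteness), which completes the proof. I expect the main obstacle to be the $\mf{n}$-side: carefully justifying that the generalized-$\eta$ decomposition of a locally $U(\mf{n})$-finite module is $\mf{g}$-stable and exhausts $V$, since $\mf{n}$ being non-abelian means this is not the naive weight-space decomposition --- though as noted this is essentially supplied by \cite[Lem.~6.1]{BRpairings} and the finiteness hypotheses built into the definition of $\mc{N}$, so the proof is ultimately a matter of assembling known pieces rather than proving something genuinely new (the proposition is attributed to \cite{CompositionSeries, TwistedSheaves}).
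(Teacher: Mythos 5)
Your argument is correct and is essentially the standard one from the cited sources (the paper itself gives no proof, deferring to \cite{CompositionSeries, TwistedSheaves}): decompose by generalized central characters using that $Z(\mf{g})$ acts through a finite-dimensional quotient, decompose by generalized $\mf{n}$-characters using the primary decomposition for locally finite modules over the nilpotent Lie algebra $\mf{n}$ together with the $\mf{g}$-stability of $\Gamma_\eta$, and observe that the two decompositions commute and are preserved by morphisms. The only phrase worth tightening is the appeal to ``the commuting action of $\mf{n}/[\mf{n},\mf{n}]$'' --- that quotient does not act on $V$; what you actually need is the Fitting/primary decomposition of a finite-dimensional module over a nilpotent Lie algebra into generalized weight spaces (Dixmier, \emph{Enveloping Algebras}, 1.3.19), which also supplies the fact that only characters vanishing on $[\mf{n},\mf{n}]$ occur.
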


Let $N$ be the unipotent subgroup of $\Int \mf{g}$ such that $\Lie N = \mf{n}$. Given a module $V \in \mc{N}(\eta)$, we can twist by $-\eta$ to obtain a module $V \otimes \C_{-\eta}\in \mc{N}(0)$ on which the action of $\mf{n}$ is locally nilpotent, and hence integrates to an algebraic action of $N$. The natural isomorphism $V \xrightarrow{\sim} V \otimes \C_{-\eta}, v \mapsto v \otimes 1$ induces an algebraic action of $N$ on $V$ whose differential differs from the $\mf{n}$-action by $\eta$. This establishes that the categories $\mc{N}(\theta, \eta)$ can be viewed as ``twisted Harish-Chandra modules''. 
\begin{proposition}
\label{prop: twisted HC modules} (\cite[Lem. 2.3]{TwistedSheaves})
There is an equivalence of categories 
\[
\mc{N}(\theta, \eta) = \mc{M}_{f.g.}(U_\theta, N, \eta),
\]
where $U_\theta$ is the quotient of $U(\mf{g})$ by the ideal generated by $J_\theta$ and $\mc{M}_{f.g.}(U_\theta,N, \eta)$ is the category of finitely generated $U_\theta$-modules admitting an algebraic action of $N$ whose differential differs from the $\mf{n}$-action obtained by restriction exactly by $\eta$. 
\end{proposition}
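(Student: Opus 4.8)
The plan is to exhibit the equivalence as a pair of mutually inverse functors, the forgetful functor and its quasi-inverse, with essentially all the content concentrated in the classical correspondence between locally nilpotent representations of the nilpotent Lie algebra $\mf n$ and algebraic representations of the unipotent group $N$ (integration via $\exp$, and its uniqueness).

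First I would construct the functor $\mc M_{f.g.}(U_\theta, N, \eta) \to \mc N(\theta,\eta)$ by forgetting the $N$-action. Given a finitely generated $U_\theta$-module $M$ together with an algebraic $N$-action whose differential is $X \mapsto X_M - \eta(X)$ on $\mf n = \Lie N$, one checks the underlying $U(\g)$-module lies in $\mc N(\theta,\eta)$: it is finitely generated over $U(\g)$ since it is so over the quotient $U_\theta$; it is $Z(\g)$-finite because $J_\theta$ annihilates it; and, since $N$ is unipotent, every algebraic $N$-module is a union of finite-dimensional subrepresentations on which $N$ acts by unipotent operators (Lie--Kolchin and Engel), so the differential is locally finite and locally nilpotent. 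Translating by $\eta$, the operators $X \mapsto X_M$ act locally finitely (so $M$ is $U(\mf n)$-finite) and $X_M - \eta(X)$ acts locally nilpotently (so $M \in \mc N(\eta)$). Morphisms go to morphisms.

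For the quasi-inverse $\mc N(\theta,\eta) \to \mc M_{f.g.}(U_\theta, N, \eta)$: given $V \in \mc N(\theta,\eta)$, it is a $U_\theta$-module because $J_\theta V = 0$. For $X \in \mf n$ set $e_X := X_V - \eta(X)$; by the $\mc N(\eta)$-condition each $e_X$ is locally nilpotent, and by $U(\mf n)$-finiteness each vector lies in a finite-dimensional subspace stable under all $e_X$, on which (Engel) the $e_X$ act nilpotently. Hence $\exp(X) \mapsto \exp(e_X)$ defines an algebraic $N$-action on that subspace; these glue to an algebraic $N$-action on $V$ with differential $X \mapsto e_X = X_V - \eta(X)$. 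The required compatibility between the $N$- and $\g$-actions (the Harish-Chandra pair axiom $n\cdot(Y\cdot v) = (\Ad(n)Y)\cdot(n\cdot v)$) follows from the operator identity $\exp(X)\,Y_V\,\exp(-X) = \exp(\ad X)(Y)_V$, valid on $\mf n$-locally-nilpotent modules, combined with the fact that $\eta$ is $\Ad(N)$-invariant: it vanishes on $[\mf n,\mf n]$ and $\Ad(\exp X)$ is trivial modulo $[\mf n,\mf n]$, so the $\eta$-twist is consistent. Finite generation is unaffected since the underlying $U(\g)$-module is unchanged.

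Finally I would verify the two composites are the identity. One direction is immediate. In the other, starting from an object with algebraic $N$-action, forgetting and re-integrating recovers the same action by the uniqueness part of the unipotent integration lemma (an algebraic representation of a unipotent group is determined by its differential through $\exp$). The forgetful functor is then fully faithful, since a $U_\theta$-module map between objects of $\mc N(\theta,\eta)$ automatically intertwines the $N$-actions, these being determined by the $\mf n$-action it already intertwines, and it is essentially surjective by the construction above; hence it is an equivalence. I expect the main obstacle to be bookkeeping rather than substance: pinning down the precise $\Ad$-compatibility in the definition of $\mc M_{f.g.}(U_\theta,N,\eta)$ and checking that the $\exp$-conjugation identity genuinely holds at the level of operators on the infinite-dimensional (but locally finite, locally nilpotent) module $V$. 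This reproduces and slightly expands the argument of \cite[Lem. 2.3]{TwistedSheaves}.
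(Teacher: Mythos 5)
Your argument is correct and is essentially the one the paper has in mind: it gives no proof of its own, citing \cite[Lem.~2.3]{TwistedSheaves}, but the paragraph immediately preceding the proposition sketches exactly your mechanism — twist by $\C_{-\eta}$ so that $e_X = X_V - \eta(X)$ is a locally nilpotent $\mf n$-action, integrate it to an algebraic $N$-action using $U(\n)$-finiteness and Engel, and use uniqueness of unipotent integration plus $\Ad(N)$-invariance of $\eta$ for the quasi-inverse and full faithfulness. Nothing further is needed.
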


\subsection{Standard/Costandard modules and contravariant pairings} \label{sec: standard/costandard modules and contravariant pairings}

The categories $\mc{N}(\theta, \eta)$ have a natural set of standard, costandard, and simple objects, and with respect to these objects, they are highest weight categories \cite[Cor. 7.4]{BRpairings}. In this section, we review the construction of standard, costandard, and simple objects, and recall the classification of contravariant pairings between standard Whittaker modules and Verma modules introduced in \cite{BRpairings}. 

 A character $\eta \in \ch \mf{n}$ determines a subset of simple roots 
\begin{equation}
    \label{eq: pi_eta}
    \Pi_\eta:= \{ \alpha \in \Pi \mid \eta|_{\mf{g}_\alpha} \neq 0 \}.
\end{equation}
Let $\Sigma_\eta$ be the root system generated by $\Pi_\eta$ and $W_\eta$ the Weyl group of $\Sigma_\eta$. Denote by
\begin{equation}
    \label{eq: all the eta subalgebras}
    \mf{l}_\eta = \mf{h} \oplus \bigoplus_{\alpha \in \Sigma_\eta} \mf{g}_\alpha, \hspace{2mm} \mf{n}_\eta = \mf{l}_\eta \cap \mf{n}, \hspace{2mm} \bar{\mf{n}}_\eta = \mf{l}_\eta \cap \bar{\mf{n}}, \hspace{2mm} \mf{n}^\eta = \bigoplus_{\alpha \in \Sigma^+ \backslash \Sigma_\eta^+}\mf{g}_\alpha, \hspace{2mm} \mf{p}_\eta = \mf{l}_\eta \oplus \mf{n}^\eta. 
\end{equation}
The Levi subalgebra $\mf{l}_\eta$ is reductive with center 
\begin{equation}
    \label{eq: center of levi}
    \mf{h}^\eta = \{H \in \mf{h} \mid \alpha(H) = 0, \alpha \in \Pi_\eta\}. 
\end{equation}
Let 
\begin{equation}
    \label{eq: HC hom of l}
p_\eta: Z(\mf{l}_\eta) \rightarrow U(\mf{h})
\end{equation}
be the Harish-Chandra homomorphism of $U(\mf{l}_\eta)$, defined as projection onto the first coordinate in the direct sum decomposition 
\begin{equation}
    U(\mf{l}_\eta) = U(\h) \oplus (\bar{\mf{n}}_\eta U(\mf{l}_\eta) + U(\mf{l}_\eta) \mf{n}_\eta).
\end{equation} 
For $\lambda \in \mf{h}^*$, let 
\begin{equation}
    \label{eq: lambda inf character}
    \chi_\eta^\lambda : Z(\mf{l}_\eta) \rightarrow \C; z \mapsto (\lambda - \rho_\eta) p_\eta(z)
    \end{equation}
    be the corresponding infinitesimal character. Here $\rho_\eta = \frac{1}{2} \sum_{\alpha \in \Sigma_\eta^+} \alpha$ is the half-sum of positive roots in $\Sigma_\eta$. We have $\chi_\eta^\lambda = \chi_\eta^\mu$ if and only if $\mu = w \lambda$ for some $w \in W_\eta$. 

From the data $(\lambda, \eta) \in \mf{h}^* \times \ch \mf{n}$, we construct a $U(\mf{l}_\eta)$-module 
\begin{equation}
    \label{eq: nondegenerate standard}
    Y(\lambda, \eta) := U(\mf{l}_\eta) \otimes_{Z(\mf{l}_\eta) \otimes U(\mf{n}_\eta)} \C_{\chi_\eta^\lambda, \eta}.
\end{equation}
Here $\C_{\chi_\eta^\lambda}$ is the one-dimensional $Z(\mf{l}_\eta) \otimes U(\mf{n}_\eta)$-module with action 
\begin{equation}
    \label{eq: 1d module in standard}
    z \otimes u \cdot v = \chi_\eta^\lambda(z)\eta(u)v
\end{equation}
for $z \in Z(\mf{l}_\eta), u \in U(\mf{n}_\eta), v \in \C$. By construction, $Y(\lambda, \eta) \simeq Y(\mu, \eta)$ if and only if $\mu = w \lambda$ for some $w \in W_\eta$. For any $\lambda \in \mf{h}^*$, $Y(\lambda, \eta)$ is an irreducible $U(\mf{l}_\eta)$-module \cite[Prop. 2.3]{McDowell}. 
\begin{definition}
\label{def: standard Whittaker module}
The {\em standard Whittaker module} associated to $(\lambda, \eta) \in \mf{h}^* \times \ch \mf{n}$ is the $U(\mf{g})$-module
\[
M(\lambda, \eta):= U(\mf{g}) \otimes_{U(\mf{p}_\eta)} Y(\lambda - \rho + \rho_\eta, \eta).
\]
Here we view $Y(\lambda - \rho + \rho_\eta)$ as a $U(\mf{p}_\eta)$-module by letting $\mf{n}^\eta$ act trivially. 
\end{definition}
The properties of standard Whittaker modules were established in \cite{McDowell}. We collect them in the following proposition for future reference. 
\begin{proposition}
\label{prop: properties of standard Whittakers}
\begin{enumerate}[label=(\roman*)]
    \item If $\theta=W\lambda$, then $M(\lambda, \eta) \in \mc{N}(\theta, \eta)$.
    \item Two standard modules $M(\lambda, \eta)$ and $M(\mu, \eta)$ are isomorphic if and only if $\mu \in W_\eta \lambda$. 
    \item The standard module $M(\lambda, \eta)$ is a Whittaker module generated by the $\eta$-Whittaker vector $1 \otimes 1 \otimes 1$. 
    \item For $\lambda \in \mf{h}^*$, denote by $\lambda_\eta$ the restriction of $\lambda$ to $\mf{h}^\eta$ (\ref{eq: center of levi}). There is a natural partial order on $\mf{h}^{\eta*} = \Hom_\C(\mf{h}^\eta, \C)$ obtained from the partial order on $\mf{h}^*$. The Lie algebra $\mf{h}^\eta$ acts semisimply on $M(\lambda, \eta)$, and with respect to this action, $M(\lambda, \eta)$ decomposes into $\mf{h}^\eta$-weight spaces: 
    \[
    M(\lambda, \eta) = \bigoplus_{\nu_\eta \leq \lambda_\eta-\rho_\eta} M(\lambda, \eta)_{\nu_\eta}.
    \]
    Each $\mf{h}^\eta$-weight space $M(\lambda, \eta)_{\nu_\eta}$ is $U(\mf{l}_\eta)$-stable, and as $U(\mf{l}_\eta)$-modules, 
    \[
    M(\lambda, \eta)_{\nu_\eta} = U(\bar{\mf{n}}^\eta)_{\mu_\eta} \otimes_\C Y(\lambda - \rho + \rho_\eta, \eta)
    \]
    for some $\mu_\eta \leq 0$ in $\mf{h}^{\eta *}$. Here $\bar{\mf{n}}^\eta = \bigoplus_{\alpha \in - (\Sigma^+ \backslash \Sigma_\eta^+)} \mf{g}_\alpha$.
    \item The standard module $M(\lambda, \eta)$ has a unique irreducible quotient $L(\lambda, \eta)$, which is also an $\eta$-Whittaker module. All simple Whittaker modules are isomorphic to $L(\lambda, \eta)$ for some $(\lambda, \eta) \in \mf{h}^* \times \ch \mf{n}$, and the collection of $L(\lambda, \eta)$ forms a complete set of simple modules in $\mc{N}$. 
    \item If $\eta = 0$, then $M(\lambda, 0)$ is a Verma module of highest weight $\lambda - \rho$. 
\end{enumerate}
\end{proposition}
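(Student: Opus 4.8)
Since Proposition \ref{prop: properties of standard Whittakers} collects results of \cite{McDowell}, the plan is not to reinvent the wheel but to organise the deductions around the PBW decomposition $\g = \bar{\mf{n}}^\eta \oplus \mf{l}_\eta \oplus \mf{n}^\eta$ and the resulting vector space isomorphism
\[
M(\lambda,\eta) \;\cong\; U(\bar{\mf{n}}^\eta) \otimes_\C Y(\lambda - \rho + \rho_\eta, \eta).
\]
First I would record that $\eta$ kills $[\mf{n},\mf{n}]$, hence vanishes on every non-simple root space, and vanishes on the simple root spaces $\mf{g}_\alpha$ with $\alpha \in \Pi \setminus \Pi_\eta$ by definition of $\Pi_\eta$; since $\mf{n}^\eta$ is built from exactly those root spaces, $\eta|_{\mf{n}^\eta} = 0$. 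As $\mf{n}_\eta$ acts by $\eta$ on the generating line of $Y(\lambda - \rho + \rho_\eta, \eta)$, the vector $1\otimes 1\otimes 1$ is an $\eta$-Whittaker vector, and since $Y(\lambda - \rho + \rho_\eta, \eta)$ is cyclic over $U(\mf{l}_\eta)$ the displayed PBW isomorphism shows it generates $M(\lambda,\eta)$; this is (iii). Part (vi) then falls out by unwinding the definitions: for $\eta = 0$ one has $\Pi_\eta = \emptyset$, $\mf{l}_\eta = \h$, $\mf{p}_\eta = \mf{b}$, $\rho_\eta = 0$ and $Y(\lambda - \rho, 0) = \C_{\lambda - \rho}$, so $M(\lambda,0) = U(\g)\otimes_{U(\mf{b})}\C_{\lambda - \rho}$.

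Next I would prove (iv), from which (v) and (ii) follow formally. The centre of $\mf{l}_\eta$ is $\mf{h}^\eta$, so $U(\mf{h}^\eta) \subseteq Z(\mf{l}_\eta)$ acts by a scalar on $Y(\lambda - \rho + \rho_\eta, \eta)$, and since $\bar{\mf{n}}^\eta$ is $\ad\mf{l}_\eta$-stable (it is the nilradical of the parabolic opposite to $\mf{p}_\eta$), under the PBW isomorphism $\mf{l}_\eta$ acts on $U(\bar{\mf{n}}^\eta) \otimes_\C Y(\lambda - \rho + \rho_\eta,\eta)$ as the tensor product of the adjoint action on $U(\bar{\mf{n}}^\eta)$ and the given action on $Y(\lambda - \rho + \rho_\eta, \eta)$. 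Decomposing $U(\bar{\mf{n}}^\eta)$ into $\ad\mf{h}^\eta$-weight spaces — whose weights are $\leq 0$, being sums of restrictions to $\mf{h}^\eta$ of negatives of roots in $\Sigma^+ \setminus \Sigma_\eta^+$ — yields the asserted $\mf{h}^\eta$-weight decomposition and the identification of each weight space as a $U(\mf{l}_\eta)$-module, with top piece $U(\bar{\mf{n}}^\eta)_0 \otimes_\C Y(\lambda - \rho + \rho_\eta, \eta) \cong Y(\lambda - \rho + \rho_\eta, \eta)$, which is $\mf{l}_\eta$-irreducible. Then (v) is the standard argument: any submodule is $\mf{h}^\eta$-stable, hence meets the top weight space in an $\mf{l}_\eta$-submodule, so in $0$ or in everything; in the latter case it contains the generator and equals $M(\lambda,\eta)$. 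Thus the sum of all proper submodules is proper, giving the unique maximal submodule and the unique simple quotient $L(\lambda,\eta)$, an $\eta$-Whittaker module as a quotient of one; that the $L(\lambda,\eta)$ exhaust the simple objects of $\mc{N}$ is then \cite{McDowell} together with Proposition \ref{prop: block decomposition} and part (ii). For (ii): if $\mu \in W_\eta\lambda$ then $Y(\mu - \rho + \rho_\eta, \eta) \cong Y(\lambda - \rho + \rho_\eta, \eta)$ (recorded before Definition \ref{def: standard Whittaker module}), and applying $U(\g)\otimes_{U(\mf{p}_\eta)}(-)$ gives $M(\mu,\eta) \cong M(\lambda,\eta)$; conversely, such an isomorphism carries top $\mf{h}^\eta$-weight space to top $\mf{h}^\eta$-weight space, hence restricts to an $\mf{l}_\eta$-isomorphism $Y(\mu - \rho + \rho_\eta, \eta) \cong Y(\lambda - \rho + \rho_\eta, \eta)$, so $\mu - \rho + \rho_\eta \in W_\eta(\lambda - \rho + \rho_\eta)$, and since every $w \in W_\eta$ fixes $\rho - \rho_\eta$ (because $W_\eta$ permutes $\Sigma^+ \setminus \Sigma_\eta^+$) this simplifies to $\mu \in W_\eta\lambda$.

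It remains to handle (i). Finite generation is (iii). Membership in $\mc{N}(\eta)$ — hence $U(\mf{n})$-finiteness — follows because the space of vectors killed by a power of $X - \eta(X)$ for every $X \in \mf{n}$ is $\g$-stable by \cite[Lem.~6.1]{BRpairings} and contains the generator, so it is all of $M(\lambda,\eta)$. For the central character: $Z(\g)$ commutes with $\mf{h}^\eta$, hence preserves the top $\mf{h}^\eta$-weight space, and acts there by a scalar character $\chi$ since that space is $\mf{l}_\eta$-irreducible; as $1\otimes 1\otimes 1$ generates $M(\lambda,\eta)$, the whole module then has infinitesimal character $\chi$, so $M(\lambda,\eta)$ lies in $\mc{N}(\theta)$ for the corresponding $\theta$. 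To evaluate $\chi$ I would act by $z \in Z(\g)$ on the generator: modulo $U(\g)\mf{n}^\eta$ (which kills it) and the $\ad\bar{\mf{n}}^\eta$-lowering terms (which land in strictly lower weight spaces and therefore cancel), $z$ acts through its parabolic Harish-Chandra projection into $Z(\mf{l}_\eta)$, which acts by $\chi_\eta^{\lambda - \rho + \rho_\eta}$, and then invoke the standard compatibility of the Harish-Chandra homomorphisms $p$ and $p_\eta$ with the parabolic projection to conclude $\chi = \chi_\lambda$; this is exactly the place where the $\rho - \rho_\eta$ shift built into the definition of $M(\lambda,\eta)$ is consumed. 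Finally, $M(\lambda,\eta) \in \mc{N}(\theta)\cap\mc{N}(\eta) = \mc{N}(\theta,\eta)$.

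The one genuinely computational point, and the step I expect to require the most care, is this last shift bookkeeping: matching the composite of the parabolic Harish-Chandra projection $Z(\g)\to Z(\mf{l}_\eta)$ with $\chi_\eta^{\lambda - \rho + \rho_\eta}$ against $\chi_\lambda$, keeping the $\rho$- and $\rho_\eta$-conventions straight. Everything else is PBW together with the structure theory of the parabolic $\mf{p}_\eta$ and the irreducibility of $Y(\lambda-\rho+\rho_\eta,\eta)$.
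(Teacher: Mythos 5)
Your proposal is correct, and it reconstructs essentially the argument behind the cited source: the paper itself gives no proof of Proposition \ref{prop: properties of standard Whittakers}, stating it as a collection of McDowell's results, and McDowell's proofs run exactly along the lines you describe --- the PBW isomorphism $M(\lambda,\eta)\cong U(\bar{\mf{n}}^\eta)\otimes_\C Y(\lambda-\rho+\rho_\eta,\eta)$, the $\mf{h}^\eta$-weight decomposition with irreducible top layer, and the parabolic Harish--Chandra projection for the infinitesimal character. The one step worth writing out fully if this were expanded is the shift bookkeeping you flag at the end (that $\chi_\eta^{\lambda-\rho+\rho_\eta}\circ p_{\mf{l}_\eta}=\chi_\lambda$ via $p=p_\eta\circ p_{\mf{l}_\eta}$), but your outline of it is sound.
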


In \cite{BRpairings}, costandard modules in category $\mc{N}$ were constructed. For a $U(\mf{g})$-module $V$, denote by 
\begin{equation}
    \label{eq: twisted U(n) finite vectors}
    (V)_\eta: = \{ v \in V \mid (X - \eta(X))^kv = 0, X \in \mf{n} \text{ for some } k \in \mathbb{N}\}
\end{equation}
the set of $\eta$-twisted $\mf{n}$-finite vectors in $V$. For any $U(\mf{g})$-module, $(V)_\eta$ is $U(\mf{g})$-stable \cite[Lem. 6.1]{BRpairings}. For a $U(\mf{g})$-module $V$, we denote by $V^* = \Hom_\C(V, \C)$ the full linear dual of $V$, which we consider as a $U(\mf{g})$-module via the action $u \cdot f(-) = f(\tau(u) \cdot - )$, where $\tau$ is the transpose antiautomorphism of $\mf{g}$. 
\begin{definition}
\label{def: costandard modules}
The {\em costandard module} in $\mc{N}$ associated to $(\lambda, \eta) \in \mf{h}^* \times \ch \mf{n}$ is
\[
M^\vee(\lambda, \eta) := (M(\lambda)^*)_\eta,
\]
where $M(\lambda)$ is the Verma module of highest weight $\lambda - \rho$. 
\end{definition}

Unlike standard modules, costandard modules are not Whittaker modules, though they are in category $\mc{N}$. Standard and costandard modules in $\mc{N}$ are related via contravariant pairings. 
\begin{definition}
\label{def: contravariant pairing}
A {\em contravariant pairing} between the standard Whittaker module $M(\lambda, \eta)$ and the Verma module $M(\mu)$ is a bilinear pairing 
\[
( \cdot, \cdot ): M(\lambda, \eta) \times M(\mu) \rightarrow \C
\]
such that $( ux, y ) = ( x, \tau(u)y )$ for all $u \in U(\mf{g})$ and $y \in M(\mu)$.
\end{definition}
\begin{theorem}
\label{thm: contravariant pairings}
\cite[Thm. 5.2]{BRpairings} If $\lambda \in \mf{h}^*$ is regular, there exists a contravariant pairing between $M(\lambda, \eta)$ and $M(\mu)$ if and only if $\mu \in W_\eta \lambda$. If a contravariant pairing between $M(\lambda, \eta)$ and $M(\mu)$ exists, it is unique up to scaling.
\end{theorem}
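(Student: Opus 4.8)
The plan is to prove both directions separately, treating existence (the ``if'' direction) and the rigidity statement (uniqueness, and the ``only if'' direction) as a single package controlled by the $\mathfrak{h}^\eta$-weight decomposition of $M(\lambda,\eta)$ recorded in Proposition \ref{prop: properties of standard Whittakers}.(iv).

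First I would analyze where a contravariant pairing $(\cdot,\cdot)\colon M(\lambda,\eta)\times M(\mu)\to\C$ can be nonzero. The Verma module $M(\mu)$ has weight-space decomposition $M(\mu)=\bigoplus_{\nu\le\mu-\rho}M(\mu)_\nu$ for the full Cartan $\mathfrak{h}$; in particular $\mathfrak{h}^\eta\subset\mathfrak{h}$ acts semisimply on it. Contravariance with respect to $\tau$ (which fixes $\mathfrak{h}$ pointwise) forces the pairing to vanish on $M(\lambda,\eta)_{\nu_\eta}\times M(\mu)_{\nu'}$ unless $\nu'|_{\mathfrak{h}^\eta}=\nu_\eta$ — here I am using that $M(\lambda,\eta)$ is $\mathfrak{h}^\eta$-semisimple (Proposition \ref{prop: properties of standard Whittakers}.(iv)), so the pairing decomposes into $\mathfrak{h}^\eta$-weight components. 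Restricting to the top $\mathfrak{h}^\eta$-weight space $M(\lambda,\eta)_{\lambda_\eta-\rho_\eta}\cong Y(\lambda-\rho+\rho_\eta,\eta)$ as an $\mathfrak{l}_\eta$-module, one obtains an induced $\mathfrak{l}_\eta$-contravariant pairing between $Y(\lambda-\rho+\rho_\eta,\eta)$ and the corresponding weight subspace of $M(\mu)$, which is (a piece of) the $\mathfrak{l}_\eta$-Verma module of highest weight $\mu-\rho$. A contravariant pairing between an irreducible $\mathfrak{l}_\eta$-module and an $\mathfrak{l}_\eta$-Verma module can only be nonzero when the two have the same $Z(\mathfrak{l}_\eta)$-infinitesimal character, i.e.\ when $\mu\in W_\eta\lambda$; this is the ``only if'' half. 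Conversely, when $\mu\in W_\eta\lambda$, I would construct the pairing explicitly by pulling back the unique (up to scalar) $\mathfrak{l}_\eta$-contravariant pairing between $Y(\lambda-\rho+\rho_\eta,\eta)$ and the $\mathfrak{l}_\eta$-Verma module $M_{\mathfrak{l}_\eta}(\mu-\rho)$ — here one uses that $Y$ is simple and is the image of the canonical map from that $\mathfrak{l}_\eta$-Verma module, so the Shapovalov-type form descends — and then extend it to all of $M(\lambda,\eta)\times M(\mu)$ using the tensor-induction presentations $M(\lambda,\eta)=U(\mathfrak{g})\otimes_{U(\mathfrak{p}_\eta)}Y(\lambda-\rho+\rho_\eta,\eta)$ and $M(\mu)=U(\mathfrak{g})\otimes_{U(\mathfrak{b})}\C_{\mu-\rho}$, together with the triangular decomposition $\mathfrak{g}=\bar{\mathfrak{n}}^\eta\oplus\mathfrak{l}_\eta\oplus\mathfrak{n}^\eta$ and the fact that $\tau$ swaps $\mathfrak{n}^\eta$ and $\bar{\mathfrak{n}}^\eta$. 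Regularity of $\lambda$ is used to guarantee the $\mathfrak{l}_\eta$-Verma module has the expected behaviour (e.g.\ its unique contravariant form lands correctly and $W_\eta\lambda$ has the expected size) — essentially it is what makes the passage between ``same $W_\eta$-orbit'' and ``same infinitesimal character'' clean.

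For uniqueness, the argument is the same rigidity: any contravariant pairing is determined by its restriction to the top $\mathfrak{h}^\eta$-weight space (since $M(\lambda,\eta)$ is cyclically generated over $U(\mathfrak{g})$ by a vector in that space, by Proposition \ref{prop: properties of standard Whittakers}.(iii), and contravariance propagates the values from there), and on the top weight space it is a contravariant pairing between the irreducible $\mathfrak{l}_\eta$-module $Y(\lambda-\rho+\rho_\eta,\eta)$ and a cyclic $\mathfrak{l}_\eta$-module, hence unique up to a scalar by Schur-type reasoning.

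I expect the main obstacle to be the bookkeeping in extending the $\mathfrak{l}_\eta$-pairing from the top weight space to the whole module while verifying contravariance for \emph{all} of $U(\mathfrak{g})$ — one must check that the naive formula $(u\cdot(1\otimes v), \,1\otimes v')=(1\otimes v,\,\tau(u)\cdot(1\otimes v'))$ is well-defined (independent of the chosen PBW-type representatives) and actually lands in $\C$, which requires controlling how $\tau(u)$ acts across the three summands of the triangular decomposition and matching $\mathfrak{h}^\eta$-weights. The potential subtlety is that, unlike the classical Verma/Verma or Verma/dual-Verma case, here the two modules are built by induction from \emph{different} parabolics, so one has to be careful that the ``overlap'' on which the pairing is supported — roughly $U(\bar{\mathfrak{n}}^\eta)\otimes Y$ paired against the corresponding graded piece of $M(\mu)$ — is exactly matched up by $\tau$. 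Since the paper cites \cite[Thm. 5.2]{BRpairings} for this statement, I would expect the actual proof there to organize this via the contravariant-pairing/duality formalism already set up in that paper, but the weight-space reduction above is the conceptual core.
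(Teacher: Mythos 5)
First, a point of order: the paper does not prove this statement — it is imported verbatim from \cite[Thm.~5.2]{BRpairings} — so there is no in-text proof to compare yours against. Judged on its own terms, your sketch has the right overall shape (reduce to the Levi $\mf{l}_\eta$, compare $Z(\mf{l}_\eta)$-infinitesimal characters, use cyclicity of the Whittaker generator for uniqueness), but the ``only if'' direction as written has a genuine gap.

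The problem is the reduction step. Restricting the pairing to the top $\mf{h}^\eta$-weight space $Y(\lambda-\rho+\rho_\eta,\eta)$ of $M(\lambda,\eta)$ pairs it against the $\mf{h}^\eta$-weight space of $M(\mu)$ of weight $(\lambda-\rho)|_{\mf{h}^\eta}$. When $\mu|_{\mf{h}^\eta}=\lambda|_{\mf{h}^\eta}$ this weight space is indeed $U(\bar{\mf{n}}_\eta)v_\mu\cong M^{\mf{l}_\eta}(\mu)$, but for general $\mu$ (which is exactly the case you must handle to prove ``only if'') it is $\bigoplus_{\gamma}U(\bar{\mf{n}}^\eta)_{-\gamma}\otimes M^{\mf{l}_\eta}(\mu)$ over all sums $\gamma$ of roots in $\Sigma^+\setminus\Sigma_\eta^+$ with $\gamma|_{\mf{h}^\eta}=(\mu-\lambda)|_{\mf{h}^\eta}$. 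As an $\mf{l}_\eta$-module this is a finite-dimensional module tensored with a Verma, so its generalized $Z(\mf{l}_\eta)$-characters are the $\chi_\eta^{\mu-\rho+\rho_\eta-\gamma'}$ for weights $\gamma'$ of $U(\bar{\mf{n}}^\eta)$; the character comparison then only yields $\lambda\in W_\eta(\mu-\gamma')$ for \emph{some} such $\gamma'$, not $\mu\in W_\eta\lambda$. The missing idea is to use contravariance with respect to $\mf{n}^\eta$, on which $\eta$ vanishes and which $\tau$ carries to $\bar{\mf{n}}^\eta$: for $X\in\mf{n}^\eta$ one has $(w,\tau(X)y)=(Xw,y)=\eta(X)(w,y)=0$, so the functional $(w,\cdot)$ kills $\bar{\mf{n}}^\eta M(\mu)$ and factors through the coinvariants $M(\mu)/\bar{\mf{n}}^\eta M(\mu)\cong M^{\mf{l}_\eta}(\mu)$, on which $Z(\mf{l}_\eta)$ genuinely acts by $\chi_\eta^{\mu-\rho+\rho_\eta}$. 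Since $\tau$ preserves the Harish-Chandra projection $p_\eta$ and $\rho-\rho_\eta$ is $W_\eta$-invariant, nonvanishing then forces $\mu\in W_\eta\lambda$ cleanly.

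With that correction the rest of your plan is sound, and in fact simplifies: since $w$ generates $M(\lambda,\eta)$, the entire pairing is determined by the single functional $(w,\cdot)$, which by the above is an $(\bar{\mf{n}}_\eta,\eta\circ\tau^{-1})$-Whittaker functional on $M^{\mf{l}_\eta}(\mu)$; as $M^{\mf{l}_\eta}(\mu)$ is free of rank one over $U(\bar{\mf{n}}_\eta)$, the space of such functionals is one-dimensional, which gives uniqueness without any appeal to irreducibility of $Y$ or ``Schur-type reasoning'' (the latter is not a valid shortcut here: a pairing between an irreducible and a cyclic module need not be unique up to scalar in general). For existence you correctly identify the crux: extending the functional to a well-defined pairing via $(uw,y):=(w,\tau(u)y)$ requires an exact description of $\Ann_{U(\mf{g})}(w)$, which is the content of \cite[Lem.~4.1]{BRpairings} (building on McDowell) and is a real input rather than bookkeeping.
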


Contravariant pairings on standard Whittaker modules are a generalization of contravariant forms on Verma modules, and they provide the canonical map between standard and costandard objects in category $\mc{N}$. The properties of costandard Whittaker modules were developed in \cite{BRpairings}, and we list them in the following proposition for future reference. 
\begin{proposition}
\label{prop: properties of costandards}
\begin{enumerate}[label=(\roman*)]
    \item Two costandard modules $M^\vee(\lambda, \eta)$ and $M^\vee(\mu, \eta)$ are isomorphic if and only if $\mu \in W_\eta \lambda$.
    \item The costandard module $M^\vee(\lambda, \eta)$ is an object in the category $\mc{N}(\theta, \eta)$, where $\theta = W \lambda$.
    \item The unique contravariant pairing $\langle \cdot, \cdot \rangle: M(\lambda, \eta) \times M(\lambda) \rightarrow \C$ (Theorem \ref{thm: contravariant pairings}, \eqref{eq: unique pairing}) induces a canonical $\mf{g}$-module homomorphism $M(\lambda, \eta) \rightarrow M^\vee(\lambda, \eta)$. 
    \item In the Grothendieck group $K\mc{N}(\theta, \eta)$,  $[M(\lambda, \eta)] = [M^\vee(\lambda, \eta)]$, so the standard and costandard module corresponding to a given parameter have the same composition factors.
    \item The costandard module $M^\vee(\lambda, \eta)$ has a unique irreducible submodule which is isomorphic to $L(\lambda, \eta)$. 
\end{enumerate}
\end{proposition}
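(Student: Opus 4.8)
The plan is to establish part (iii) first and then to deduce the remaining parts from it, from the basic properties of Backelin's functor $\overline{\Gamma}_\eta$, and from the structure theory recorded above. For (iii): the canonical contravariant pairing $\langle\cdot,\cdot\rangle\colon M(\lambda,\eta)\times M(\lambda)\to\C$ of Theorem~\ref{thm: contravariant pairings} is the same datum as a linear map $\psi\colon M(\lambda,\eta)\to M(\lambda)^*$, $m\mapsto\langle m,\cdot\rangle$, and the defining identity $\langle um,y\rangle=\langle m,\tau(u)y\rangle$ says precisely that $\psi$ intertwines the $\mf{g}$-action on $M(\lambda,\eta)$ with the $\tau$-twisted action on $M(\lambda)^*$. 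Since $M(\lambda,\eta)\in\mc{N}(\theta,\eta)\subseteq\mc{N}(\eta)$ by Proposition~\ref{prop: properties of standard Whittakers}(i), every vector of $M(\lambda,\eta)$ is $\eta$-twisted $\mf{n}$-finite, and a $\mf{g}$-equivariant image of such a module again consists of $\eta$-twisted $\mf{n}$-finite vectors; hence $\psi$ factors through $(M(\lambda)^*)_\eta=M^\vee(\lambda,\eta)$, producing the canonical homomorphism, and composing it with the inclusion into $M(\lambda)^*$ recovers the pairing.

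For (ii) and (iv) the key observation is that $M^\vee(\lambda,\eta)=\overline{\Gamma}_\eta\bigl(M^\vee(\lambda)\bigr)$, where $M^\vee(\lambda)$ denotes the dual Verma module in $\mc{O}$: its completion is $\overline{M^\vee(\lambda)}=\prod_\mu(M(\lambda)_\mu)^*=M(\lambda)^*$, so $\overline{\Gamma}_\eta(M^\vee(\lambda))=\Gamma_\eta(M(\lambda)^*)=(M(\lambda)^*)_\eta$. Because $\overline{\Gamma}_\eta$ carries $\mc{O}$ into $\mc{N}$ and respects the $Z(\mf{g})$-action, (ii) follows: $M^\vee(\lambda,\eta)$ is finitely generated, it lies in $\mc{N}(\eta)$ by the construction of $(-)_\eta$, and its infinitesimal character $\chi_\lambda$ is inherited from $M(\lambda)$ since $\tau$ fixes $Z(\mf{g})$ pointwise. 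For (iv), apply the exact functor $\overline{\Gamma}_\eta$ to the identity $[M(\lambda)]=[M^\vee(\lambda)]$ in the Grothendieck group of $\mc{O}$; combined with $\overline{\Gamma}_\eta M(\lambda)=M(\lambda,\eta)$ (\cite[Prop.~6.9]{Backelin}) and the identification above, this yields $[M(\lambda,\eta)]=[M^\vee(\lambda,\eta)]$ in $K\mc{N}(\theta,\eta)$.

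For (v) and (i): $\mc{N}$ is closed under submodules (as $U(\mf{g})$ is Noetherian), so via the tautological nondegenerate pairing $M^\vee(\lambda,\eta)\times M(\lambda)\to\C$, a simple submodule of $M^\vee(\lambda,\eta)=(M(\lambda)^*)_\eta$ is a simple Whittaker module in $\mc{N}(\theta,\eta)$ pairing nontrivially with $M(\lambda)$; using the structure of $M(\lambda)$ as an $\mf{l}_\eta$-module under the central subalgebra $\mf{h}^\eta$ (the picture analogous to Proposition~\ref{prop: properties of standard Whittakers}(iv)) together with the irreducibility of $Y(\lambda-\rho+\rho_\eta,\eta)$ forces such a submodule to be isomorphic to $L(\lambda,\eta)$ and to exhaust the socle. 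This is the argument of \cite{McDowell, BRpairings}, which I would follow. Part (i) then follows from (v): the ``only if'' direction is immediate, as an isomorphism $M^\vee(\lambda,\eta)\cong M^\vee(\mu,\eta)$ identifies socles and so forces $L(\lambda,\eta)\cong L(\mu,\eta)$, hence $\mu\in W_\eta\lambda$; for the ``if'' direction one transports Proposition~\ref{prop: properties of standard Whittakers}(ii) along the duality functor on $\mc{N}(\theta,\eta)$ that interchanges standard and costandard modules.

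The main obstacle is part (v): everything else is either a formal manipulation of the contravariant pairing or a transport of structure along the exact functor $\overline{\Gamma}_\eta$, whereas pinning down the socle of $M^\vee(\lambda,\eta)$ genuinely requires the combinatorics of $\mf{h}^\eta$-weights and the fine structure of standard modules, and cannot be obtained by pure category theory. A secondary point requiring care is the consistent matching of the parameter $\lambda$, together with its $\rho$-shift conventions, across the identifications $\overline{\Gamma}_\eta M(\lambda)=M(\lambda,\eta)$ and $\overline{\Gamma}_\eta M^\vee(\lambda)=M^\vee(\lambda,\eta)$.
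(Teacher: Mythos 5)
The paper itself gives no proof of this proposition: it is introduced with ``we list them in the following proposition for future reference'' and every part is imported from \cite{BRpairings}, so there is no in-paper argument to compare yours against line by line. Judged on its own terms, your treatment of (iii) is correct and is the standard one: contravariance of $\langle\cdot,\cdot\rangle$ is exactly the statement that $m\mapsto\langle m,\cdot\rangle$ is $\mf{g}$-equivariant for the $\tau$-twisted action on $M(\lambda)^*$, and since every vector of $M(\lambda,\eta)$ is $\eta$-twisted $\mf{n}$-finite the image lands in $(M(\lambda)^*)_\eta$. Your route to (ii) and (iv) through the identity $M^\vee(\lambda,\eta)=\overline{\Gamma}_\eta(M^\vee(\lambda))$ (valid because $\overline{M^\vee(\lambda)}=\prod_\mu M(\lambda)_\mu^*=M(\lambda)^*$) is also sound and is genuinely different from \cite{BRpairings}, which argues directly with pairings and duality rather than via Backelin's functor; what it buys you is that (iv) becomes a one-line consequence of exactness of $\overline{\Gamma}_\eta$ and $[M(\lambda)]=[M^\vee(\lambda)]$ in $K\mc{O}$. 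Two caveats: you need $\tau|_{Z(\mf{g})}=\id$ for the infinitesimal character claim (true, but worth saying why), and the identification $\overline{\Gamma}_\eta M(\lambda)=M(\lambda,\eta)$ from \cite{Backelin} holds for a suitable representative of the $W_\eta$-orbit of $\lambda$, which you do flag.

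The genuine gaps are in (v) and the ``if'' direction of (i), and you are right that (v) is the substantive part. For (v) you only gesture at the mechanism: the key point is that a nonzero submodule of $(M(\lambda)^*)_\eta$ must pair nontrivially with $M(\lambda)$, hence its annihilator in $M(\lambda)$ is a proper submodule; one then has to convert this into the statement that the socle is simple and isomorphic to $L(\lambda,\eta)$, which requires the $\mf{h}^\eta$-weight analysis of Proposition \ref{prop: properties of standard Whittakers}(iv) and the irreducibility of $Y(\lambda-\rho+\rho_\eta,\eta)$ --- this step is not a formality and is where \cite{BRpairings} does real work. For (i), the ``only if'' direction via socles is fine, but the ``if'' direction as you state it is circular as written: the simple-preserving duality on $\mc{N}(\theta,\eta)$ interchanging standards and costandards is itself one of the main constructions of \cite{BRpairings}, not something you may invoke for free. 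A self-contained alternative is to deduce ``if'' from (iii) and (v) together with (iv): for $\mu\in W_\eta\lambda$ one has $M(\mu,\eta)\cong M(\lambda,\eta)$, and the canonical maps of (iii) realise both $M^\vee(\lambda,\eta)$ and $M^\vee(\mu,\eta)$ as objects with the same composition factors and simple socle $L(\lambda,\eta)$ receiving a map from the same standard object; one must still argue that this characterises the costandard object up to isomorphism (e.g.\ via the highest weight structure), so some appeal to \cite{BRpairings} remains unavoidable here.
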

\subsection{Deformed modules} \label{sec: deformed modules}

To define the Jantzen filtrations algebraically, we need to introduce deformed versions of the objects above. To begin, we recall the construction of deformed category $\mc{O}$, following \cite{Soergel}.

\subsubsection{Deformed category $\mc{O}$}
\label{sec: deformed category O}

Let $T$ be a commutative unital $\C$-algebra with a distinguished algebra homomorphism  
\begin{equation}
    \label{eq: symmetric algebra to T}
    U(\h)=S(\h) = \mathcal{O}(\h^*) \xrightarrow{\varphi} T.
\end{equation}
Let $\bimod{\mf{g}}{T}$ denote the category of bimodules over $\g$ and $T.$ For $\lambda \in \mf{h}^*$ and $M \in \gmodT$, define the {\em $\lambda$-deformed weight space} to be
\begin{equation}
    \label{eq: deformed weight space}
    M^\lambda := \{m \in M \mid (H - \lambda(H))m = m \varphi(H) \text{ for all } H \in \mf{h}\}.
\end{equation}

\begin{definition}
\label{def: deformed category O}
{\em Deformed category $\mc{O}$} is the full subcategory $\mc{O}_T \subseteq \gmodT$ of locally $\mf{n}$-finite modules in $\gmodT$ with the property that $M = \bigoplus_\lambda M^\lambda$. 
\end{definition}

We are especially interested in two classes of objects in $\mc{O}_T$: deformed Verma modules and deformed dual Verma modules. 

\begin{definition}
\label{def: deformed Verma module}
For $\lambda \in \mf{h}^*$, the corresponding {\em deformed Verma module} is the $(\mf{g},T)$-bimodule 
\[
    M_T(\lambda):= U(\mf{g}) \otimes_{U(\mf{b})}T_{\lambda - \rho},
\]
where $\g$ acts by left multiplication on the first tensor factor and $T$ acts on the final tensor factor by right multiplication. The $U(\mf{b})$-module structure on $T_{\lambda - \rho}$ is given by extending the $\mf{h}$-action 
\[
H \cdot t = (\lambda - \rho + \varphi)(H) t 
\]
for $H \in \mf{h}$ and $t \in T$ trivially to $\mf{b}$. 
\end{definition}

\begin{definition}
\label{def: deformed dual Verma}
The {\em deformed dual Verma module} $M^\vee_T(\lambda)$ corresponding to $\lambda \in \mf{h}^*$ is the sum of the deformed weight spaces in 
\[
\mbox{ind}_{\bar{\mf{b}}}^\mf{g}(T_{\lambda - \rho}) = \Hom_{U(\bar{\mf{b}})}(U(\mf{g}), T_{\lambda - \rho}).
\]
Here $\Hom_{U(\bar{\mf{b}})}(U(\mf{g}), T_{\lambda - \rho})$ refers to homomorphisms of left $U(\overline{\mf{b}})$-modules. It is given the structure of a left $U(\mf{g})$-module   via the right action of $U(\mf{g})$ on itself. 
\end{definition}

\begin{remark}
\label{rem: different definitions of dual Verma modules}
As $(\mf{g},T)$-bimodules, 
\begin{equation}
    \label{eq: two dual vermas}
    \Hom_{U(\bar{\mf{b}})}(U(\mf{g}), T_{\lambda - \rho}) \simeq \Hom_T(M_T(\lambda), T)
\end{equation}
where $\Hom_T(M_T(\lambda), T)$ denotes the set of $T$-module homomorphisms from $M_T(\lambda)$ to $T$. The set $\Hom_T(M_T(\lambda), T)$ is given the structure of a left $U(\mf{g})$-module via the action
\[
u \cdot f(v \otimes t) = f(\tau(u)v \otimes t),
\]
for $u,v \in U(\mf{g})$, $f \in \Hom_T(M_T(\lambda), T)$, and $t \in T_{\lambda - \rho}$. The isomorphism \eqref{eq: two dual vermas} comes from the left $U(\mf{g})$-module isomorphism \footnote{Here superscripts on $\Hom$ indicate whether we are considering left or right modules, and the left $U(\mf{g})$-module structure on $\Hom^r_{U(\mf{b})}(U(\mf{g}), T_{\lambda - \rho})$ comes from the right action of $U(\mf{g})$ on itself via left multiplication by the transpose.} 
\[
\Hom_{U(\bar{\mf{b}})}^l(U(\mf{g}), T_{\lambda - \rho}) \simeq \Hom^r_{U(\mf{b})}(U(\mf{g}), T_{\lambda - \rho})
\]
and an application of tensor-hom adjunction for right modules. 
\end{remark}
 The bimodules $M_T(\lambda)$ and $M^\vee_T(\lambda)$ are objects in $\mc{O}_T$. The deformed weight spaces of $M_T(\lambda)$ and $M^\vee_T(\lambda)$ are free finite-rank $T$-modules, and in both modules, for $T \neq 0$, the rank of the deformed $(\lambda - \rho - \nu)$-weight space is $\dim_\C U(\mf{n})^\nu$. 

\begin{remark}
\label{rem: recovering category O}
If we specialize to the case $T=\C$ and let $\varphi$ be evaluation at zero, then $\mc{O}_T$ is a version of Bernstein--Gelfand--Gelfand's category $\mc{O}$ which is missing some of the usual finiteness conditions. In this case, the $M_T(\lambda)$ and $M_T^\vee(\lambda)$ are the Verma module and dual Verma module of highest weight $\lambda - \rho$. 
\end{remark}

\subsubsection{Deformed Whittaker modules}

Within the category of $(\mf{g},T)$-bimodules, we can also define deformed standard and costandard Whittaker modules. 

\begin{definition}
\label{def: deformed standard Whittaker}
For a pair $(\lambda, \eta) \in \mf{h}^* \times \ch{\mf{n}}$, we define a {\em deformed standard Whittaker module} to be the $(\mf{g},T)$-bimodule
\[
M_T(\lambda, \eta) = U(\mf{g}) \otimes_{U(\mf{p}_\eta)} U(\mf{l}_\eta) \otimes_{Z(\mf{l}_\eta) \otimes U(\mf{n}_\eta)} T_{\lambda - \rho+\rho_\eta,\eta}
\]
where $\g$ acts by left multiplication on the first tensor factor and $T$ acts by right multiplication on the final tensor factor. The $Z(\mf{l}_\eta) \otimes U(\mf{n}_\eta)$-module structure on $T_{\lambda - \rho+\rho_\eta}$ is given as follows: for $z \otimes u \in Z(\mf{l}_\eta) \otimes U(\mf{n}_\eta)$ and $ t \in T_{\lambda - \rho+\rho_\eta}$, 
\[
z \otimes u \cdot t = (\chi^{\lambda-\rho+\rho_\eta}_\eta + \varphi \circ p_\eta)(z) \eta(u) t,
\]
where $p_\eta$ is the Harish-Chandra homomorphism of $\mf{l}_\eta$ \eqref{eq: HC hom of l}, $\chi_\eta^{\lambda-\rho+\rho_\eta}$ is the infinitesimal character of $Z(\mf{l}_\eta)$ corresponding to $\lambda-\rho+\rho_\eta$ \eqref{eq: lambda inf character}, and $\varphi$ is as in \eqref{eq: symmetric algebra to T}. 

\end{definition}
Because $\chi_\eta^{\lambda + \rho - \rho_\eta}  = \chi_\eta^{w \lambda + \rho - \rho_\eta}$ for all $w \in W_\eta$, we see immediately from the definition that 
\begin{equation}
    \label{eq: standard Whittakers in an orbit are isomorphic}
    M_T(\lambda, \eta) \simeq M_T(w \lambda, \eta)
\end{equation}
for all $w \in W_\eta$. 

Recall that for a $U(\mf{g})$-module $V$, $(V)_\eta$ denotes the $U(\mf{g})$-submodule of $\eta$-twisted $\mf{n}$-finite vectors in $V$ \eqref{eq: twisted U(n) finite vectors}. 
\begin{definition}
\label{def: deformed costandard Whittaker} For a pair $(\lambda, \eta) \in \mf{h}^* \times \ch{\mf{n}}$, the corresponding {\em deformed costandard Whittaker module} is the $(\g, T)$-bimodule
\[
M^\vee_T(\lambda, \eta) = \left( \Hom_{U(\bar{\mf{b}})}(U(\mf{g}),  T_{\lambda - \rho}) \right)_\eta.
\]
Here the $U(\mf{g})$-module structure on $\Hom_{U(\bar{\mf{b}})}(U(\mf{g}), T_{\lambda - \rho})$ is given by right multiplication of $U(\mf{g})$ on itself. 
\end{definition}

\subsection{Two algebraic Jantzen filtrations}
\label{sec: two algebraic Jantzen filtrations}

\subsubsection{The Jantzen filtration of a Verma module}
\label{sec: the Jantzen filtration of a Verma module}
There is an isomorphism
\label{sec: The Jantzen filtration of a Verma module}
\begin{equation}
    \label{eq: homs between vermas and dual}
    \Hom_{\mc{O}_T}(M_T(\lambda), M_T^\vee(\lambda)) \xrightarrow{\sim} T
\end{equation}
induced from restriction to the deformed $(\lambda - \rho)$-weight space together with the canonical identifications $M_T(\lambda)^{\lambda - \rho} \xrightarrow{\sim} T$ and $M^\vee_T(\lambda)^{\lambda - \rho} \xrightarrow{\sim} T$ \cite[Prop. 2.12]{Soergel}. Under this isomorphism, $1 \in T$ distinguishes a canonical $(\mf{g}, T)$-bimodule homomorphism 
\begin{equation}
    \label{eq: canonical map of vermas}
\psi: M_T(\lambda) \rightarrow M_T^\vee(\lambda).
\end{equation}

To define the Jantzen filtration of a deformed Verma module, we must specialize slightly. Let $A$ be a local $\C$-algebra and $\varphi: U(\mf{h}) = \mc{O}(\mf{h}^*) \rightarrow A$ a distinguished ring homomorphism such that $\varphi (\mf{h}) \subseteq \mf{m}$, where $\mf{m}$ is the unique maximal ideal of $A$. 

 For any right $A$-module $M$, there is a ascending $A$-module filtration $M_{-i} = M \mf{m}^i$ with associated graded $\mathrm{gr}_iM = M_i/M_{i-1}$. In particular, there is a reduction map 
 \begin{equation}
     \label{eq: reduction map}
     \mathrm{red}: M \rightarrow \mathrm{gr}_0 M =  M/M \mf{m}, m \mapsto \overline{m}.
 \end{equation}
 Applying this to the $A$-module $M_A(\lambda)$, we see that the filtration layers are $\mf{g}$-stable, so we obtain
 a surjective $\mf{g}$-module homomorphism 
 \begin{equation}
     \label{eq: reduction map verma}
     \mathrm{red}: M_A(\lambda) \rightarrow \mathrm{gr}_0M_A(\lambda) = M(\lambda).
 \end{equation}
Similarly, the reduction map for $M_A^\vee(\lambda)$ gives a surjective $\mf{g}$-module homomorphism from $M^\vee_A(\lambda)$ to the dual Verma module $M^\vee(\lambda)$. 
Pulling back the filtration above along the canonical map $\psi$ \eqref{eq: canonical map of vermas}, we obtain a $(\mf{g},A)$-bimodule filtration of $M_A(\lambda)$.
\begin{definition}
\label{def: Jantzen filtration of Verma} The {\em Jantzen filtration of $M_A(\lambda)$} is the $(\mf{g},A)$-bimodule filtration
\[
M_A(\lambda)_{-i} : = \{ m \in M_A(\lambda) \mid \psi(m) \in M^\vee_A(\lambda) \mf{m}^i \},
\]
where $\psi$ is the canonical map in \eqref{eq: canonical map of vermas}. By applying the map $\mathrm{red}$ \eqref{eq: reduction map verma} to the filtration layers, we obtain in this way a filtration $M(\lambda)_\bullet$ of $M(\lambda)$, which we refer to as the {\em Jantzen filtration of $M(\lambda)$}. 
\end{definition}

\begin{remark}
    It is convenient for us to view the Jantzen filtration as an ascending filtration, as this aligns more naturally with the geometric filtrations occurring in Section \ref{sec: D-modules}. Because of this, our definition of the Jantzen filtration differs slightly from that in \cite{Jantzen}, where it is defined as a descending filtration. 
\end{remark}

This filtration was introduced using contravariant forms in work of Jantzen \cite{Jantzen}. In the case where the deformation direction is $\rho$, the filtration satisfies surprising functoriality and semisimplicity properties. More precisely, we further specialize to the following setting.

Set $T = \mc{O}(\C \rho)$ to be the ring of regular functions on the line $\C \rho \subset \mf{h}^*$. A choice of a linear functional $v:\C\rho \rightarrow \C$ gives an isomorphism $T \cong \C[v]$. We fix such an identification. Set 
\begin{equation}
    \label{eq: completion of T}
    A = T_{(v)}
\end{equation}
to be the local $\C$-algebra obtained from $T$ by inverting all polynomials not divisible by  $v$, and 
\begin{equation}
    \label{eq: phi}
    \varphi: \mc{O}(\mf{h}^*) \rightarrow A
\end{equation}
to be the composition of the restriction map $\mc{O}(\mf{h}^*) \rightarrow T$ with inclusion $T \hookrightarrow A$. 

\begin{theorem}
\label{thm: Jantzen conjecture}
 \cite[Cor. 5.3.4, Cor. 5.3.1]{BBJantzen} {\em (The Jantzen Conjectures for Verma modules)} Let $A$ and $T$ be as in \eqref{eq: completion of T}.
 \begin{enumerate}
     \item If $\iota: M(\mu) \hookrightarrow M(\lambda)$ is an embedding of Verma modules, then the filtration on $\iota(M(\mu))$ induced from the Jantzen filtration on $M(\lambda)$ agrees with the Jantzen filtration on $M(\mu)$, up to a shift. 
     \item The filtration layers $M(\lambda)_i/M(\lambda)_{i-1}$ of the Jantzen filtration of a Verma module $M(\lambda)$ are semisimple for all $i$.
 \end{enumerate} 
\end{theorem}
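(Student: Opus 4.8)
The plan is to reproduce the geometric strategy of Beilinson--Bernstein (an algebraic proof via deformed Soergel bimodules, due to K\"ubel--Soergel and Williamson, is also available): transport the problem to geometry, realise the deformed Verma module together with its canonical self-pairing via a maximal extension functor, and then invoke the theory of weights for mixed Hodge modules (or mixed $\ell$-adic perverse sheaves).

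First I would reduce to the case that $\lambda$ is regular, integral and antidominant. Non-integrality is absorbed by passing to the integral root subsystem $\Sigma_{[\lambda]}$ and working in the corresponding block; singular weights are reduced to regular ones via translation functors, using that translation off a wall is exact and intertwines Jantzen filtrations up to relabelling — this is tedious but purely categorical bookkeeping. With $\lambda$ regular integral antidominant, Beilinson--Bernstein localisation identifies the block $\cO_\lambda$ with regular holonomic twisted $\cD$-modules on the flag variety $X$, hence with $B$-equivariant perverse sheaves constructible along the Bruhat stratification; under this equivalence $M(w\lambda)$ is the standard extension $j_{w!}$ of the constant sheaf on the cell $X_w$, the dual Verma module is $j_{w*}$, and the canonical map $\psi$ is the natural arrow $j_{w!}\to j_{w*}$.

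Next I would give the deformation over $A = T_{(v)}$, $T = \cO(\C\rho)$, a geometric model. The direction $\rho$ determines a rank-one local system degenerating to the trivial one; Beilinson's maximal extension (equivalently the unipotent nearby-cycle construction) applied to the corresponding family produces a mixed object carrying a nilpotent endomorphism $N$, and its $N$-adic monodromy filtration, after applying the global sections/de Rham functor, is exactly the ``powers of $v$'' filtration defining the Jantzen filtration — both are filtrations by powers of the deformation parameter, and that parameter in $A$ is matched with $N$. One then upgrades to mixed Hodge modules: the standard object is mixed, and a theorem in that theory asserts that the weight filtration of a maximal extension of a pure object coincides with the relative monodromy filtration of $N$. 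Hence the Jantzen filtration is the global-sections image of the weight filtration. To conclude, strictness of morphisms of mixed Hodge modules for $W_\bullet$, applied to the geometric avatar $j_{v!}\to j_{w!}$ of an embedding $\iota: M(\mu)\hookrightarrow M(\lambda)$, yields the hereditary property (1), with the shift accounting for the difference in weights; and $\gr^W$ of a mixed Hodge module is a sum of pure, hence semisimple, Hodge modules, which on taking global sections gives the semisimplicity (2).

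The main obstacle is the middle step: identifying the precise geometric incarnation of the deformed Verma module $M_A(\lambda)$ and the deformed canonical pairing $\psi_A$, and proving that global sections carry the monodromy filtration to the Jantzen filtration — equivalently, that the monodromy and weight filtrations agree for the relevant maximal extensions. This requires the purity of the intersection cohomology sheaves appearing, and it is exactly the point given only minimal justification in \cite{BBJantzen}; everything else is formal once the dictionary is set up.
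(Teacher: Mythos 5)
Your proposal is correct and is essentially the argument the paper relies on: the paper imports this theorem from \cite{BBJantzen} and its own geometric proof of the Whittaker generalisation (Sections \ref{sec: D-modules}--\ref{sec: Proof of the Jantzen Conjecture}) specialises at $\eta=0$ to exactly your sketch --- localisation, maximal extension, matching of the monodromy/``powers of $s$'' filtration with the Jantzen filtration under global sections, and strictness plus semisimplicity from the theory of weights. You also correctly isolate the genuinely delicate step (identifying the global sections of the deformed object and aligning the two filtrations), which is the same point the paper singles out as under-justified in \cite{BBJantzen} and treats carefully in Lemma \ref{lem: global sections of deformed d-module are deformed whittaker} and Section \ref{sec: relationship between algebraic and geometric Janten filtrations}.
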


The strategy of Beilinson--Bernstein's proof of Theorem \ref{thm: Jantzen conjecture} is to relate the Jantzen filtration to the weight filtration of the corresponding $\mc D$-module. We will follow a similar course for Whittaker modules in Section \ref{sec: Proof of the Jantzen Conjecture}.

\subsubsection{The Jantzen filtration of a standard Whittaker module}
\label{sec: The Jantzen filtration of a standard Whittaker module}

We can generalize the classical construction above to Whittaker modules using the canonical map between standard and costandard Whittaker modules (Proposition \ref{prop: properties of costandards}(iii)). Our first step is to establish the existence of a canonical map between standard and costandard Whittaker modules on the deformed level. To do so, we use $A$-valued contravariant pairings. 

We return to the setting where $A$ is a local $\C$-algebra equipped with a distinguished algebra homomorphism $\varphi:U(\mf{h}) \rightarrow A$ which maps $\mf{h}$ into the maximal ideal $\mf{m}$.  

\begin{definition}
\label{def: A-contravariant pairing} An {\em $A$-contravariant pairing} between $(\mf{g},A)$-bimodules $V$ and $W$ is an $A$-valued $A$-bilinear pairing
\[
( \cdot, \cdot )_A: V \times W \rightarrow A
\]
such that for all $u \in U(\mf{g})$, $v \in V$ and $w \in W$, $( uv, w )_A = ( v, \tau(u) w )_A$, where $\tau:U(\mf{g}) \rightarrow U(\mf{g})$ is the transpose antiautomorphism. 
\end{definition}
Clearly the set $\Psi_A(V,W)$ of $A$-contravariant pairings between $V$ and $W$ is a right $A$-module. We are interested in the $A$-module
\begin{equation}
    \label{eq: psi_a}
    \Psi_A:= \Psi_A(M_A(\lambda, \eta), M_A(\lambda))
\end{equation}
of $A$-contravariant pairings between deformed standard Whittaker modules and deformed Verma modules, which is closely related to the $\C$-vector space
\begin{equation}
    \label{eq: psi}
    \Psi:= \{ \text{contravariant pairings between $M(\lambda, \eta)$ and $M(\lambda)$} \}
\end{equation}
studied in \cite{BRpairings}. By Theorem \ref{thm: contravariant pairings}, $\Psi$ is $1$-dimensional. 

Let $w = 1 \otimes 1 \otimes 1 \in M(\lambda, \eta)$ be the generating Whittaker vector and $v = 1 \otimes 1 \in M(\lambda)$ be the generating highest weight vector. Denote by 
\begin{equation}
    \label{eq: unique pairing}
    \langle \cdot, \cdot \rangle: M(\lambda, \eta) \otimes M(\lambda) \rightarrow \C
\end{equation}
the unique contravariant pairing such that $\langle w, v \rangle = 1$ (Theorem \ref{thm: contravariant pairings}). Set $\underline{w} = 1 \otimes 1 \otimes 1 \in M_A(\lambda, \eta)$ and $\underline{v} = 1 \otimes 1 \in M_A(\lambda)$. Clearly $\underline{w}$ (resp. $\underline{v}$) generates $M_A(\lambda, \eta)$ (resp. $M_A(\lambda)$) as a $(\mf{g}, A)$-bimodule. We define a $A$-bilinear pairing
\begin{equation}
    \label{eq: unique A pairing}
    \langle \cdot, \cdot \rangle_A: M_A(\lambda, \eta) \otimes M_A(\lambda) \rightarrow A
\end{equation}
by $\langle u\underline{w} a, u' \underline{v}b \rangle_A := \langle uw, u'v \rangle ab$ for $u, u' \in U(\mf{g}), a, b \in A$. 

\begin{proposition}
\label{prop: well-defined}
The map $\langle \cdot, \cdot \rangle_A$ in \eqref{eq: unique A pairing} is an $A$-contravariant pairing of $M_A(\lambda, \eta)$ and $M_A(\lambda)$. 
\end{proposition}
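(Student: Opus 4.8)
The plan is to show that the formula $\langle u\underline{w}a, u'\underline{v}b\rangle_A := \langle uw, u'v\rangle ab$ is well-defined and then verify the two required properties ($A$-bilinearity and contravariance) directly from this formula. The only genuine content is well-definedness, since once the map is known to exist, $A$-bilinearity is immediate from the placement of the scalars $a,b$, and contravariance follows from the corresponding property of the undeformed pairing $\langle\cdot,\cdot\rangle$: for $x \in U(\mf{g})$ we have $\langle x(u\underline w a), u'\underline v b\rangle_A = \langle (xu)w, u'v\rangle ab = \langle uw, \tau(x)u'v\rangle ab = \langle u\underline w a, \tau(x)(u'\underline v b)\rangle_A$, using that left multiplication by $U(\mf{g})$ commutes with the right $A$-action and that $\langle\cdot,\cdot\rangle$ is contravariant over $\C$.

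For well-definedness, the issue is that an element of $M_A(\lambda,\eta)$ may be written as $u\underline w a$ in many ways (and similarly for $M_A(\lambda)$), so I must check that the value $\langle uw, u'v\rangle ab$ depends only on the elements $u\underline w a$ and $u'\underline v b$, not on the chosen expressions. The cleanest way I would organize this is to recall that $M_A(\lambda,\eta) = U(\mf{g})\otimes_{U(\mf{p}_\eta)} U(\mf{l}_\eta)\otimes_{Z(\mf{l}_\eta)\otimes U(\mf{n}_\eta)} A_{\lambda-\rho+\rho_\eta,\eta}$ is, as a right $A$-module, free: by the deformed analogue of Proposition~\ref{prop: properties of standard Whittakers}(iv), it decomposes into $\mf{h}^\eta$-weight spaces, each a free $A$-module, and in fact $M_A(\lambda,\eta) \cong M(\lambda,\eta)\otimes_\C A$ as $(\mf{g},A)$-bimodules with $\mf{g}$ acting on the first factor and $A$ on the second; likewise $M_A(\lambda) \cong M(\lambda)\otimes_\C A$. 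Under these identifications $\underline w = w\otimes 1$, $\underline v = v\otimes 1$, and $u\underline w a = (uw)\otimes a$, $u'\underline v b = (u'v)\otimes b$. Since $w$ generates $M(\lambda,\eta)$ over $U(\mf{g})$ and $v$ generates $M(\lambda)$, every element of $M_A(\lambda,\eta)$ is an $A$-linear combination of elements of the form $(uw)\otimes 1$, and the proposed pairing is visibly the $A$-bilinear extension of $\langle\cdot,\cdot\rangle \otimes (\text{multiplication }A\times A \to A)$, i.e. $\langle m\otimes a, n\otimes b\rangle_A = \langle m,n\rangle\, ab$. This extension exists and is unique precisely because $M(\lambda,\eta)$ and $M(\lambda)$ are free (flat) $\C$-modules, so tensoring the bilinear form with $A$ over $\C$ is unproblematic; no relations are violated.

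Concretely I would carry out the steps in this order: (1) establish the bimodule isomorphisms $M_A(\lambda,\eta)\cong M(\lambda,\eta)\otimes_\C A$ and $M_A(\lambda)\cong M(\lambda)\otimes_\C A$ (this is essentially the base-change description of the induced modules, and is where the freeness over $A$ enters); (2) transport the candidate formula through these isomorphisms and observe it coincides with the $A$-bilinear extension $\langle m\otimes a, n\otimes b\rangle_A = \langle m,n\rangle ab$ of the $\C$-bilinear pairing $\langle\cdot,\cdot\rangle$ from Theorem~\ref{thm: contravariant pairings}, which is manifestly well-defined and $A$-bilinear; (3) check contravariance by the short computation above, using that the $\mf{g}$-action on $M_A(\lambda,\eta)\cong M(\lambda,\eta)\otimes_\C A$ is through the first factor and that $\langle\cdot,\cdot\rangle$ is contravariant. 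I expect the main obstacle to be purely bookkeeping in step (1): matching the various tensor-product and $\rho$-shift conventions in Definitions~\ref{def: deformed standard Whittaker} and~\ref{def: deformed Verma module} so that the generators $\underline w, \underline v$ correspond correctly to $w\otimes 1, v\otimes 1$, and confirming that the $\mf{h}^\eta$-weight-space decomposition indeed yields $A$-freeness (so that $M_A(\lambda,\eta)$ is genuinely obtained from $M(\lambda,\eta)$ by extension of scalars along $\C\to A$). Once that identification is in hand, the proposition is a formality.
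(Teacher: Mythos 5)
Your contravariance and $A$-bilinearity checks are fine, but step (1) --- the assertion that $M_A(\lambda,\eta)\cong M(\lambda,\eta)\otimes_\C A$ and $M_A(\lambda)\cong M(\lambda)\otimes_\C A$ as $(\g,A)$-bimodules with $\g$ acting through the first factor --- is false, and it is the load-bearing step of your well-definedness argument. The deformed modules are not obtained from the undeformed ones by extension of scalars along $\C\to A$: by Definition \ref{def: deformed Verma module} the Cartan acts on the generator $\underline v$ by $\lambda-\rho+\varphi$, so in $M_A(\lambda)$ one has the relation
\[
\bigl(H-(\lambda-\rho)(H)\bigr)\,\underline v \;=\; \underline v\,\varphi(H), \qquad H\in\h,
\]
with $\varphi(H)\in\mf m$ nonzero in general. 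Under your proposed identification the left-hand side would be sent to $\bigl(H-(\lambda-\rho)(H)\bigr)v\otimes 1=0$ while the right-hand side goes to $v\otimes\varphi(H)\neq 0$; so not only is there no such bimodule isomorphism, even the underlying assignment $u'\underline v\,b\mapsto (u'v)\otimes b$ is not a well-defined map. This is not convention bookkeeping: if the deformed modules were trivial base changes, the canonical map $\psi_A$ would be $\psi\otimes\id_A$ and the resulting Jantzen filtration would reduce to the trivial $\mf m$-adic filtration, which is exactly what the deformation is designed to avoid.

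What survives of your step (1) is only the right-$A$-module statement that $M_A(\lambda,\eta)$ and $M_A(\lambda)$ are free over $A$, and that is precisely how the paper uses it: freeness and $A$-bilinearity are invoked to reduce well-definedness to the case $a=b=1$, i.e.\ to showing that $u\underline w=z\underline w$ and $u'\underline v=z'\underline v$ force $\langle uw,u'v\rangle=\langle zw,z'v\rangle$. The paper settles this not by base change but by the reduction map $\mathrm{red}\colon M_A\to M_A/M_A\mf m\cong M$ of \eqref{eq: reduction map}, which is a genuine $U(\g)$-module homomorphism in the correct direction (from the deformed module to the undeformed one) and yields $\Ann_{U(\g)}\underline w\subseteq\Ann_{U(\g)}w$ and $\Ann_{U(\g)}\underline v\subseteq\Ann_{U(\g)}v$. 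To repair your argument, replace the base-change isomorphism with this annihilator comparison; your bilinearity and contravariance computations then go through unchanged.
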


\begin{proof}
By construction, $\langle \cdot, \cdot \rangle_A$ is $A$-bilinear and contravariant. It remains to show that it is well-defined. 

A vector $x \in M_A(\lambda, \eta)$ can be represented as $u \underline{w} a$ for $u \in U(\mf{g}), a \in A$ in many different ways, and similarly for $y =u'\underline{v} b \in M_A(\lambda)$. We will show that our definition is independent of such choices. Because $\langle \cdot, \cdot \rangle_A$ is defined to be $A$-bilinear and $M_A(\lambda, \eta)$ and $M_A(\lambda)$ are free as $A$-modules, it suffices to consider the case when $a=b=1$. 

Let $u, u', z, z' \in U(\mf{g})$ be such that 
\[
x = u \underline{w} = z \underline{w} \text{ and } y = u' \underline{v} = z' \underline{v}. 
\]
Then $u - z \in \Ann_{U(\mf{g})} \underline{w}$ and $u'-z' \in \Ann_{U(\mf{g})} \underline{v}$. In fact, $u - z \in \Ann_{U(\mf{g})} w$ and $u'-z' \in \Ann_{U(\mf{g})} v$, because the reduction map $\mathrm{red}: M_A \rightarrow M$ \eqref{eq: reduction map} for $M = M(\lambda, \eta)$ or $M = M(\lambda)$ is a $U(\mf{g})$-module homomorphism. This implies that 
\[
\langle uw, u'v \rangle = \langle zw, z'v \rangle,
\]
because $\langle \cdot, \cdot \rangle$ is well defined, and hence 
\[
\langle u\underline{w}, u'\underline{v} \rangle_A = \langle z\underline{w}, z'\underline{v} \rangle_A. \qedhere
\]
\end{proof}

We have constructed one element $\langle \cdot, \cdot \rangle_A \in \Psi_A$. For any $a \neq 1 \in A$, $\langle \cdot , \cdot \rangle_A a$ is another (distinct) element of $\Psi_A$, so $A \subseteq \Psi_A$. We claim that there are no other $A$-contravariant pairings. 
\begin{proposition}
\label{prop: Psi_A is rank 1}
As a right $A$-module, $\Psi_A \simeq A$. 
\end{proposition}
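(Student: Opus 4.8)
The plan is to prove $\Psi_A \simeq A$ by showing that every $A$-contravariant pairing is an $A$-multiple of the pairing $\langle\,\cdot\,,\cdot\,\rangle_A$ constructed in \eqref{eq: unique A pairing}. We already know $A \subseteq \Psi_A$ via $a \mapsto \langle\,\cdot\,,\cdot\,\rangle_A\, a$; this map is injective because evaluating on the pair of generators $(\underline{w}, \underline{v})$ returns $a \in A$. So it remains to show surjectivity, i.e. that any $(\,\cdot\,,\cdot\,)_A \in \Psi_A$ is determined by its value $(\underline{w}, \underline{v})_A \in A$, and that it equals $\langle\,\cdot\,,\cdot\,\rangle_A \cdot (\underline{w}, \underline{v})_A$.

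First I would observe that a contravariant pairing $(\,\cdot\,,\cdot\,)_A$ is completely determined by the single value $c := (\underline{w}, \underline{v})_A$. Indeed, since $\underline w$ generates $M_A(\lambda, \eta)$ and $\underline v$ generates $M_A(\lambda)$ as $(\mf g, A)$-bimodules, and the pairing is $A$-bilinear and contravariant, for any $u, u' \in U(\mf g)$ and $a, b \in A$ we have $(u\underline w a, u'\underline v b)_A = (\underline w, \tau(u)u'\underline v)_A\, ab$, which reduces the problem to computing $(\underline w, \tau(u)u'\underline v)_A$ for $\tau(u)u' \in U(\mf g)$. Decomposing $\tau(u)u'$ against the triangular decomposition $U(\mf g) = U(\bar{\mf n}) U(\mf h) U(\mf n)$ and using contravariance to move $\mf n$-factors over to the left side (where they act on $\underline w$ by the character $\eta$ plus a piece landing in lower weight spaces) and $\bar{\mf n}$-factors similarly, one sees that $(\underline w, x \underline v)_A$ for arbitrary $x$ is an $A$-linear expression in $c$ alone — exactly as in the non-deformed classification of \cite{BRpairings} (Theorem \ref{thm: contravariant pairings}), but now with all scalars living in $A$. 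Thus the natural map $\Psi_A \to A$, $(\,\cdot\,,\cdot\,)_A \mapsto (\underline w, \underline v)_A$, is injective.

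Next I would argue that this map is surjective, which together with injectivity and the already-noted inclusion $A \hookrightarrow \Psi_A$ gives $\Psi_A \cong A$. The cleanest way is to note that $\langle\,\cdot\,,\cdot\,\rangle_A$ maps to $1 \in A$ under the evaluation map, so its $A$-span already surjects onto $A$; hence the composite $A \hookrightarrow \Psi_A \to A$ is the identity, forcing both maps to be isomorphisms. Alternatively — and this is the step that actually needs the deformed structure rather than just formal nonsense — one can mimic \cite{Soergel, BRpairings} directly: use the isomorphism \eqref{eq: homs between vermas and dual} together with the fact (Remark \ref{rem: different definitions of dual Verma modules}) that $M^\vee_A(\lambda) = \Hom_A(M_A(\lambda), A)$ to identify $\Psi_A$ with $\Hom_{(\mf g, A)}(M_A(\lambda, \eta), M^\vee_A(\lambda))$, and compute the latter Hom by restricting a homomorphism to an appropriate generating weight space and invoking freeness of the deformed weight spaces over $A$.

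The main obstacle I anticipate is making the reduction "$(\underline w, x\underline v)_A$ is an $A$-multiple of $c$" genuinely rigorous rather than an appeal to the non-deformed case: one must check that the combinatorial identities used in \cite{BRpairings} to force uniqueness of the contravariant pairing — which rely on regularity of $\lambda$ and on the structure of $\mf l_\eta$-Verma modules being simple — survive base change along $\varphi: U(\mf h) \to A$ with $\varphi(\mf h) \subseteq \mf m$. Since $A$ is a local ring and the weight spaces of $M_A(\lambda,\eta)$ and $M_A(\lambda)$ are free of the expected finite rank over $A$ (as recorded after Definition \ref{def: deformed dual Verma}), the relevant linear-algebra arguments should go through verbatim with $\C$ replaced by $A$, but this is exactly where care is required, so I would spell it out there and treat the rest as formal.
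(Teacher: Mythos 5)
Your proof is correct, and it takes a more direct route than the paper's. The paper proves the proposition by reducing a given pairing $(\cdot,\cdot)_A$ modulo $\mf m$ (the map $g$ of \eqref{eq: g}), invoking the one-dimensionality of $\Psi$ from Theorem \ref{thm: contravariant pairings}, and then asserting in \eqref{eq: multiple} that every $A$-pairing is recovered, up to a factor $a\in A$, from its reduction; that assertion is labelled ``easy to check,'' and the natural way to check it is precisely your computation (note that as literally stated it is delicate for pairings all of whose values lie in $\mf m$, such as $\langle\cdot,\cdot\rangle_A\,m$ with $0\neq m\in\mf m$, whose reduction is zero). Your key step is the right one and is genuinely elementary: contravariance and $A$-bilinearity force $(u\underline w a,u'\underline v b)_A=F(\tau(u)u')\,(\underline w,\underline v)_A\,ab$ for a multiplier $F(x)\in A$ depending only on $x$ and not on the pairing --- write $x=\bar n h n$ in PBW form $U(\bar{\mf n})U(\h)U(\n)$, move $\bar n$ across the pairing so that $\tau(\bar n)\in U(\n)$ acts on the Whittaker vector $\underline w$ by the scalar $\eta(\tau(\bar n))$, let $n$ act on $\underline v$ by the augmentation and $h$ by the deformed weight $(\lambda-\rho+\varphi)(h)$. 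Since the same identity holds for $\langle\cdot,\cdot\rangle_A$ with $(\underline w,\underline v)_A$ replaced by $1$, every pairing equals $\langle\cdot,\cdot\rangle_A\cdot(\underline w,\underline v)_A$, which is exactly your injectivity-plus-splitting conclusion. Two small corrections. First, your closing worry is aimed at the wrong half of \cite{BRpairings}: regularity of $\lambda$ and the finer structure theory are needed for \emph{existence} and well-definedness of the pairing (supplied here by Proposition \ref{prop: well-defined}), whereas uniqueness given the value on the generators is the purely formal computation above and survives any base change with no extra input. Second, it is the $U(\bar{\mf n})$-factors that get moved to the left (becoming elements of $U(\n)$ under $\tau$) and they act on $\underline w$ \emph{exactly} by $\eta$; there is no correction term ``landing in lower weight spaces.''
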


\begin{proof}
Let $( \cdot, \cdot )_A $ be an $A$-contravariant pairing. From $( \cdot, \cdot )_A$, we can define a contravariant pairing
\[
( \cdot, \cdot ): M(\lambda, \eta) \times M(\lambda) \rightarrow \C
\]
by $( uw, u'v ) := \mathrm{red} ( u \overline{w}, u' \overline{v} ) _A$, where $\mathrm{red}: A \rightarrow \C$ is the reduction map \eqref{eq: reduction map} for the left $A$-module $A$. This defines a map 
\begin{equation}
    \label{eq: g}
    g: \Psi_A \rightarrow \Psi; ( \cdot, \cdot )_A \mapsto ( \cdot, \cdot ). 
\end{equation}

By the construction above Proposition \ref{prop: well-defined}, we also have a map
\begin{equation}
    \label{eq: f}
    f: \Psi \rightarrow \Psi_A,
\end{equation}
with $f(\langle \cdot, \cdot \rangle) = \langle \cdot, \cdot \rangle_A$. (Here $\langle \cdot, \cdot \rangle$ and $\langle \cdot, \cdot \rangle_A$ are the specific pairings defines in \eqref{eq: unique pairing} and \eqref{eq: unique A pairing}.) 

The maps $f$ and $g$ are not inverse to one another, but it is easy to check that for any $A$-contravariant pairing $( \cdot, \cdot )_A$, there exists $a \in A$ such that 
\begin{equation}
    \label{eq: multiple}
    f \circ g ( ( \cdot, \cdot )_A) a = ( \cdot, \cdot )_A. 
\end{equation}
Now, by Theorem \ref{thm: contravariant pairings}, 
\[
g(( \cdot, \cdot )_A) = ( \cdot, \cdot ) = c \langle \cdot, \cdot \rangle
\]
for some $c \in \C$, where $\langle \cdot, \cdot \rangle$ is the unique pairing in \eqref{eq: unique pairing}.  From this we see that for an $A$-contravariant pairing $( \cdot, \cdot )_A$, we have on one hand,
\[
f \circ g (( \cdot, \cdot )_A)a = ( \cdot, \cdot )_A
\]
for some $a \in A$, and on the other hand, 
\[
f \circ g ( (\cdot, \cdot )_A) = f (c \langle \cdot, \cdot \rangle ) = c\langle \cdot, \cdot \rangle_A. 
\]
Hence $( \cdot, \cdot )_A = c \langle \cdot, \cdot \rangle_A a$ is an $A$-multiple of $\langle \cdot , \cdot \rangle_A$. 
\end{proof}

We can use Proposition \ref{prop: Psi_A is rank 1} to construct a canonical map between deformed standard and costandard Whittaker modules. Recall that by Remark \ref{rem: different definitions of dual Verma modules}, we can view a deformed costandard Whittaker module (Definition \ref{def: deformed costandard Whittaker}) as a submodule of the full $A$-linear dual of a deformed Verma module:
\[
M^\vee_A(\lambda, \eta) \simeq \left( \Hom_A(M_A(\lambda), A) \right)_\eta \subseteq \Hom_A(M_A(\lambda), A).
\]

There is a canonical $A$-contravariant form $\langle \cdot, \cdot \rangle_A \in \Psi_A$ corresponding to $1 \in A$ with $\langle \underline w, \underline v \rangle_A=1$ (Proposition \ref{prop: Psi_A is rank 1}). This induces a $(\mf{g}, A)$-bimodule morphism \begin{equation}
    \label{eq: canonical map deformed Whittaker}
    \psi_\eta: M_A(\lambda, \eta) \rightarrow \Hom_A(M_A(\lambda), A); m \mapsto \langle m, \cdot \rangle_A. 
\end{equation}
Because $\psi_\eta$ is a $\mf{g}$-module homomorphism and $(M_A(\lambda, \eta))_\eta = M(\lambda, \eta)$, the image of $\psi_\eta$ lands in $M_A^\vee(\lambda, \eta) = \left( \Hom_A(M_A(\lambda), A) \right)_\eta$. 

Now we can define a Jantzen filtration of $M_A(\lambda, \eta)$ analogously to Definition \ref{def: Jantzen filtration of Verma} by pulling back the natural filtration of $M^\vee_A(\lambda, \eta)$ along $\psi_\eta$. 

\begin{definition}
\label{def: Jantzen filtration of a standard Whittaker}
The {\em Jantzen filtration of $M_A(\lambda, \eta)$} is the $(\mf{g}, A)$-bimodule filtration 
\[
M_A(\lambda, \eta)_{-i}:= \{ m \in M_A(\lambda, \eta) \mid \psi_\eta(m) \in M_A^\vee(\lambda, \eta) \mf{m}^i\},
\]
where $\mf{m}$ is the unique maximal ideal of $A$. By applying the reduction map \ref{eq: reduction map} to the filtration layers, we obtain a filtration $M(\lambda, \eta)_\bullet$ of $M(\lambda, \eta)$ which we refer to as the {\em Jantzen filtration of $M(\lambda, \eta)$. }
\end{definition}

The Jantzen filtration of a standard Whittaker module satisfies the same hereditary and semisimplicity properties as the Jantzen filtration of a Verma module. This is our main result. 

\begin{theorem}
\label{thm: Jantzen conjecture Whittaker} {\em (The Jantzen conjectures for Whittaker modules)} Let $A$ be the local $\C$-algebra in \eqref{eq: completion of T} determined by $\rho$. 
\begin{enumerate}
    \item If $\iota: M(\mu, \eta) \hookrightarrow M(\lambda, \eta)$ is an embedding of standard Whittaker modules, then the filtration on $\iota(M(\mu, \eta))$ induced from the Jantzen filtration on $M(\lambda, \eta)$ agrees with the Jantzen filtration on $M(\mu, 
    \eta)$, up to a shift. 
    \item Then filtration layers $M(\lambda, \eta)_i/M(\lambda, \eta)_{i-1}$ of the Jantzen filtration of a standard Whittaker module $M(\lambda, \eta)$ are semisimple for all $i$.
\end{enumerate}
\end{theorem}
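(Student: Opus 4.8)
The plan is to deduce Theorem \ref{thm: Jantzen conjecture Whittaker} from the Jantzen conjectures for Verma modules (Theorem \ref{thm: Jantzen conjecture}) by transporting everything through Backelin's functor $\overline{\Gamma}_\eta$. The key input is Theorem \ref{thm: deformed Vermas go to standard Whittakers}: the deformed Backelin functor (suitably defined on $\mc{O}_A$) is exact and sends the deformed Verma module $M_A(\mu)$ to the deformed standard Whittaker module $M_A(\mu, \eta)$ whenever the $\mf{l}_\eta$-Verma module attached to $\mu$ is simple --- and by Proposition \ref{prop: properties of standard Whittakers}(ii) we may always choose such a $\mu$ in the $W_\eta$-orbit defining a given standard Whittaker module. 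So first I would check that the deformed Backelin functor carries the canonical map $\psi: M_A(\mu) \to M_A^\vee(\mu)$ to the canonical map $\psi_\eta: M_A(\mu, \eta) \to M_A^\vee(\mu, \eta)$ of \eqref{eq: canonical map deformed Whittaker}. This is where the uniqueness statements do the work: $\overline{\Gamma}_\eta$ (or rather its deformation) is exact and sends $M_A(\mu)$ to $M_A(\mu,\eta)$, and one checks it sends the deformed dual Verma $M_A^\vee(\mu)$ to $M_A^\vee(\mu,\eta)$ using the ``twisted $\mf{n}$-finite vectors'' description of both costandard objects (Definition \ref{def: deformed costandard Whittaker}); then $\overline{\Gamma}_\eta(\psi)$ is a nonzero element of $\Hom_{\mc{O}_A}(M_A(\mu,\eta), M_A^\vee(\mu,\eta))$, and after identifying this Hom-space with $A$ via restriction to the relevant deformed weight space (as in \eqref{eq: homs between vermas and dual}, now using Proposition \ref{prop: Psi_A is rank 1}), it must be a unit times $\psi_\eta$, hence defines the same filtration up to shift.

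Granting that compatibility, the argument runs as follows. Since the deformed Backelin functor is exact and $A$-linear, it commutes with tensoring by powers of $\mf{m}$, so it sends the ``powers of $\mf{m}$'' filtration on $M_A^\vee(\mu)$ to the ``powers of $\mf{m}$'' filtration on $M_A^\vee(\mu, \eta)$; pulling back along $\psi$ resp.\ $\psi_\eta$ and using the previous paragraph, $\overline{\Gamma}_\eta$ sends the deformed Jantzen filtration $M_A(\mu)_{-i}$ to $M_A(\mu, \eta)_{-i}$ (up to an overall shift). Applying the reduction map and using exactness again, the undeformed Backelin functor $\overline{\Gamma}_\eta: \mc{O} \to \mc{N}$ sends the Jantzen filtration of the Verma module $M(\mu)$ to the Jantzen filtration of $M(\mu, \eta)$. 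Now part (ii) is immediate: $\overline{\Gamma}_\eta$ is exact and sends simple objects to simple objects or zero (by McDowell's classification and \cite[Prop.~6.9]{Backelin}, simples go to simples in the relevant blocks), so it sends the semisimple layers $M(\mu)_i/M(\mu)_{i-1}$ of Theorem \ref{thm: Jantzen conjecture}(2) to semisimple layers of $M(\mu, \eta)_\bullet$.

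For part (i), suppose $\iota: M(\nu, \eta) \hookrightarrow M(\lambda, \eta)$ is an embedding of standard Whittaker modules. One chooses $\mu, \mu'$ in the appropriate $W_\eta$-orbits with simple associated $\mf{l}_\eta$-Vermas, so that $M(\mu, \eta) \cong M(\nu, \eta)$, $M(\mu', \eta) \cong M(\lambda, \eta)$, and (using that embeddings of standard Whittaker modules correspond under $\overline{\Gamma}_\eta$ to embeddings of Verma modules --- this uses that $\overline{\Gamma}_\eta$ is a quotient functor on blocks, \cite[Cor.~2]{AriasBackelin}, so it is full on the relevant Hom-spaces and reflects monomorphisms between these objects) the embedding $\iota$ is $\overline{\Gamma}_\eta$ applied to an embedding $M(\mu) \hookrightarrow M(\mu')$ of Verma modules. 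By Theorem \ref{thm: Jantzen conjecture}(1), the induced filtration on the image of $M(\mu)$ in $M(\mu')$ agrees with the Jantzen filtration of $M(\mu)$ up to shift; applying the exact functor $\overline{\Gamma}_\eta$ and invoking the matching of Jantzen filtrations established above yields the same statement for $\iota$.

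The main obstacle is the first step --- constructing the deformed Backelin functor and proving Theorem \ref{thm: deformed Vermas go to standard Whittakers} that it sends $M_A(\mu)$ to $M_A(\mu,\eta)$ --- and the closely related verification that it intertwines the canonical maps $\psi$ and $\psi_\eta$. The subtlety is that $\overline{\Gamma}_\eta$ involves completing with respect to weight spaces and extracting generalised Whittaker vectors, and in the deformed setting one must control the interaction of these operations with the $A$-module structure and with the deformed weight space decomposition; the factorisation of $\overline{\Gamma}_\eta$ into a ``nondegenerate'' piece (over $\mf{l}_\eta$, where Kostant's theory applies) and a ``zero'' piece, together with the explicit description of partial weight spaces in $M_A(\lambda)$ paralleling Proposition \ref{prop: properties of standard Whittakers}(iv), is what makes this tractable. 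Everything downstream of Theorem \ref{thm: deformed Vermas go to standard Whittakers} is then a formal consequence of exactness, $A$-linearity, and the rank-one computations of Hom-spaces.
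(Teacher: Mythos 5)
Your proposal is correct and follows essentially the same route as the paper's algebraic proof in \S\ref{sec: Jantzen conjecture via category O}: reduce to the Verma-module Jantzen conjectures (Theorem \ref{thm: Jantzen conjecture}) via the (deformed) Backelin functor, using Theorem \ref{thm: deformed Vermas go to standard Whittakers} to match deformed standard objects, the twisted-$\mf{n}$-finite description to match costandard objects and hence the canonical maps $\psi$ and $\psi_\eta$, $A$-linearity and exactness to match the Jantzen filtrations, and \cite[Cor.~2]{AriasBackelin} to realise every embedding of standard Whittaker modules as the image of an embedding of Verma modules. Your extra care in identifying $\overline{\Gamma}_\eta(\psi)$ with a unit multiple of $\psi_\eta$ via the rank-one Hom computation, and in noting that semisimplicity of layers is preserved because simples go to simples or zero, fills in details the paper leaves implicit but does not change the argument.
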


We will prove this theorem in two ways: first, by comparing it to the Jantzen filtration of a Verma module using a functor introduced by Backelin, and second, by comparing it to the weight filtration on the corresponding mixed twistor $\mc{D}$-module. 

\subsection{Whittaker vectors in completed deformed Verma modules}
\label{sec: deformed Backelin}

In this section we lift results of Kostant \cite{Kostant} on Whittaker vectors in completed Verma modules to the deformed case.

We return to the setting where $A$ is a local $\C$-algebra with maximal ideal $\mf m$ and fix a map $\varphi:U(\h)\to A$ with $\varphi(\mf{h}) \subseteq \mf{m}$.
Recall that for an $A$-module $M$ we define an ascending filtration $M_{-i}=M\mf m^i$ with associated graded $\gr_iM=M_{i}/M_{i+1}.$ Let $\lambda\in \h^*.$ The deformed Verma module $M_A(\lambda)$ has deformed weight spaces of the form 
\[
M_A(\lambda)^{\lambda-\rho+\mu}\cong U(\bar{\mf n})_{\mu}\otimes A
\]
for a weight $\mu$ of $U(\bar{\mf{n}})$, and for each $i\leq 0$ there is a $\mf g$-equivariant isomorphism $M(\lambda)\cong \gr_i M_A(\lambda).$
We consider the $(\n,A)$-bimodule $U(\n)^*\otimes A$ where $\n$ acts on $U(\n)^*$ by the contragredient action and $A$ by right multiplication. Then we get the following deformed version  of \cite[Prop. 3.8]{Kostant}.


\begin{proposition}\label{prop:descriptionofsimpledeformedvermaasnmodule}
Let $\lambda\in \mf h^*.$ If $M(\lambda)$ is simple, then there is an isomorphism of $(\n,A)$-bimodules
$$\beta: M_A(\lambda)\to U(\n)^*\otimes A.$$ 
\end{proposition}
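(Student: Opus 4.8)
The plan is to factor the map $\beta$ through the deformed dual Verma module $M_A^\vee(\lambda)$: one shows that $M_A^\vee(\lambda)\cong U(\n)^*\otimes A$ as $(\n,A)$-bimodules for \emph{every} $\lambda$, and that the simplicity of $M(\lambda)$ is exactly what makes the canonical map $M_A(\lambda)\to M_A^\vee(\lambda)$ an isomorphism. Composing the two yields $\beta$. This is the deformed avatar of the classical picture behind \cite[Prop. 3.8]{Kostant}, where nondegeneracy of the Shapovalov form identifies a simple Verma module with its dual.

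For the first point, recall from Remark \ref{rem: different definitions of dual Verma modules} that $M_A^\vee(\lambda)$ is the sum of deformed weight spaces in $\Hom_{U(\bar{\mf b})}(U(\g),A_{\lambda-\rho})$, with left $\g$-action induced by right multiplication of $U(\g)$ on itself. The Poincaré--Birkhoff--Witt decomposition $U(\g)\cong U(\bar{\mf b})\otimes_\C U(\n)$ --- an isomorphism of left $U(\bar{\mf b})$-modules under which right multiplication by $U(\n)$ acts on the right tensor factor --- gives an isomorphism of right $A$-modules $\Hom_{U(\bar{\mf b})}(U(\g),A_{\lambda-\rho})\cong\Hom_\C(U(\n),A)$ carrying a residual left $\n$-action by $(X\cdot f)(w)=f(wX)$, the dual of the right regular representation of $U(\n)$; this $(\n,A)$-bimodule structure does not involve $\lambda$. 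A direct computation with \eqref{eq: deformed weight space} shows that the deformed $(\lambda-\rho-\nu)$-weight space corresponds to the finite-rank free summand $\Hom_\C(U(\n)_\nu,A)=U(\n)_\nu^*\otimes A$, so that $M_A^\vee(\lambda)\cong\bigoplus_\nu U(\n)_\nu^*\otimes A=U(\n)^*\otimes A$. Finally, precomposition with the principal antiautomorphism $S$ of $U(\n)$ (with $S(X)=-X$ for $X\in\n$) converts the dual of the right regular representation into the contragredient action, giving the stated isomorphism of $(\n,A)$-bimodules.

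For the second point, consider the canonical $(\g,A)$-bimodule map $\psi_A\colon M_A(\lambda)\to M_A^\vee(\lambda)$, the $A$-analogue of \eqref{eq: canonical map of vermas} obtained from \eqref{eq: homs between vermas and dual} by sending $1\in A$ to $\psi_A$; it is the unique morphism restricting to the identity on the highest deformed weight space. Since this characterisation is compatible with base change, $\psi_A$ reduces modulo $\mf m$ to the canonical map $M(\lambda)\to M^\vee(\lambda)$, which is nonzero. If $M(\lambda)$ is simple, this reduced map is injective (a nonzero homomorphism out of a simple module), and since $M^\vee(\lambda)$ has the same formal character as $M(\lambda)$ --- both have $(\lambda-\rho-\nu)$-weight space of dimension $\dim_\C U(\n)^\nu$ --- the module $M^\vee(\lambda)$ is simple too and the map is an isomorphism. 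On each deformed weight space $\psi_A$ is then a homomorphism of free $A$-modules of the same finite rank whose reduction modulo $\mf m$ is an isomorphism, hence an isomorphism by Nakayama's lemma. Therefore $\psi_A$ is an isomorphism of $(\g,A)$-bimodules.

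Setting $\beta$ to be the composite of $\psi_A$ with the identification from the second paragraph finishes the proof. The step requiring the most care is the bookkeeping of that second paragraph: one must follow the PBW decomposition through the two descriptions of $M_A^\vee(\lambda)$ in Remark \ref{rem: different definitions of dual Verma modules}, correctly match the deformed weight grading, and pin down the antiautomorphism needed to land precisely on the contragredient $(\n,A)$-action named in the statement. A secondary point to check is the compatibility of $\psi_A$ with reduction modulo $\mf m$; everything else is standard category $\mc O$ input.
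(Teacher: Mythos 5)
Your proof is correct, and it is organised differently from the one in the paper, so a comparison is worthwhile. The paper constructs $\beta$ by a direct formula: with $l:M_A(\lambda)\to A$ the coefficient-of-highest-weight-vector functional, it sets $\beta_m(u)=l(\widecheck{u}m)$, checks by hand that this is a weight-preserving $(\n,A)$-bimodule map into $U(\n)^*\otimes A$, quotes Kostant's undeformed statement \cite[Prop.~3.8]{Kostant} to see that the reduction $\overline\beta$ is an isomorphism, and finishes with Nakayama on the finite-rank free weight spaces. You instead factor through the deformed dual Verma module: the identification $M_A^\vee(\lambda)\cong U(\n)^*\otimes A$ via PBW and the antipode is valid for every $\lambda$ (and your bookkeeping of the deformed weight spaces and of the twist needed to land on the contragredient action is right), while simplicity of $M(\lambda)$ enters only to make the canonical map $\psi_A$ of \eqref{eq: canonical map of vermas} an isomorphism --- injectivity of the reduction out of a simple module plus equality of weight-space dimensions, then Nakayama. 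The two constructions produce the same map (the paper's $\beta_m$ is exactly $\langle\psi_A(m),-\rangle$ read through PBW), but your version makes transparent \emph{where} simplicity is used, reuses the $\Hom_{\mc{O}_T}(M_T(\lambda),M_T^\vee(\lambda))\cong T$ machinery of \S\ref{sec: the Jantzen filtration of a Verma module}, and in effect reproves Kostant's undeformed statement rather than citing it; the paper's version is shorter at the cost of black-boxing that input. The only point worth making explicit if you write this up is the compatibility of $\psi_A$ with reduction modulo $\mf m$, which you correctly flag: it follows because $\psi_A$ is the identity on the highest deformed weight space and the deformed weight spaces are free of finite rank, so the reduction is the (nonzero) canonical map $M(\lambda)\to M^\vee(\lambda)$.
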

\begin{proof}
Denote by $v=1\otimes 1 \in M_A(\lambda)$ the highest weight generator and by $l:M_A(\lambda)\to A$ the $A$-linear map determined by $l(v)=1$ and $l(n)=0$ for all $n\in M_A(\lambda)^{\mu}$ for $\mu\neq \lambda-\rho.$

Let $m\in M_A(\lambda).$ Define a $\C$-linear map $\beta_m\in \Hom_\C(U(\mf n), A)$ by $\beta_m(u)=l(\widecheck{u}m)$ for $u\in U(\n).$ Here $\widecheck{uw}=\widecheck{w}\widecheck{u}$ for $u,w\in U(\n)$ and $\widecheck{x}=-x$ for $x\in \mf n.$ 

For $x\in \n,$ $u\in U(\n)$ and $m\in M_A(\lambda)$, we have $$\beta_{xm}(u)=l(\widecheck{u}xm)=-l(\widecheck{u}\widecheck{x}m) = -l(\widecheck{xu}m)=\beta_m(-xu) = x\cdot \beta_{m}(u)$$ where $x\cdot \beta_{m}$ denotes the contragredient action of $x$ on $\beta_{m}.$ Moreover, let $m\in M_A(\lambda)^{\lambda-\rho -\mu}$ and $u\in U(\n)_{\mu'}$ for $\mu'\neq \mu.$ Then $\beta_m(u)=l(\widehat{u}m)=0$ since $\widehat{u}m\in M_A(\lambda)^{\neq \lambda - \rho}.$ It follows that $\beta: m\mapsto \beta_m$ is a map of $(\n, A)$-bimodules with image in $U(\n)^*\otimes A=\Hom_\C(U(\mf n), \C)\otimes A\subset \Hom_\C(U(\mf n), A)$ which maps $M_A(\lambda)^{\lambda-\rho-\mu}$ to $U(\n)^*_{\mu}\otimes A.$

By reducing mod $\mf m$ \eqref{eq: reduction map}, we obtain a commutative diagram 
\[\begin{tikzcd}
	{M_A(\lambda)^{\lambda-\rho-\mu}} & {U(\mf n)^*_{\mu}\otimes A} \\
	{M(\lambda)_{\lambda-\rho-\mu}} & {U(\mf n)^*_{\mu}}
	\arrow["\beta", from=1-1, to=1-2]
	\arrow["\mathrm{red}", from=1-1, to=2-1]
	\arrow["\mathrm{red}", from=1-2, to=2-2]
	\arrow["\overline\beta", from=2-1, to=2-2]
\end{tikzcd}\]
where $\overline\beta$ is an isomorphism by \cite[Prop. 3.8]{Kostant} using that $M(\lambda)$ is simple. Since $A$ is local and the weight spaces are finitely generated free $A$-modules, Nakayama's lemma implies that $\beta$ is an isomorphism. 
\end{proof}
For $M\in \bimod{\h}{A}$ we define the \emph{completion} $\overline{M}$ by
\begin{equation}
    \label{eq: completion definition}
    \overline{M}=\prod_{{\mu}\in \h^*}M^{{\mu}}.
\end{equation}
Let $\eta\in \mf n^*$ and
\begin{equation}
    \label{eq: U eta}
    U_\eta(\n):=\ker(\eta: U(\n)\to \C).
\end{equation} 
Moreover, we define
\begin{align}
\label{eq: Whittaker vectors}
    \Wh_\eta(M)&=\{m\in M\,\mid\, (X-\eta(X))m=0 \text{ for all }X\in \mf n\},\\
\label{eq: generalised Whittaker vectors}
    \Gamma_\eta(M)&=\{m\in M\,\mid\, \exists k\geq 0  \text{ with } U_\eta(\n)^km=0\}, \text{ and}\\
\label{eq: Xi}
    \Omega_\eta(M)&= U(\mf g)\Wh_\eta(M).
\end{align}
We have $\Omega_\eta(M)\subset \Gamma_\eta(M)$ by \cite[Prop. 4.2.1]{Kostant}. $\Gamma_\eta$ admits a filtration by
 \begin{equation}
     \label{eq: Whittaker level}
     \Gamma_\eta(M)_n=\{m\in M\,\mid\, U_\eta(\n)^n m=0\} =\Hom_{U(\n)}(U(\n)/U_\eta(\n)^n,M).
 \end{equation}
Moreover, $\Omega_\eta$ admits a filtration by $\Omega_\eta(M)_n=U(\g)_{\leq n}\Wh_\eta(M)$ where $U(\g)_{\leq n}\subset U(\g)$ is the subspace of all terms of order up to $n\geq 0.$ We abbreviate $\overline{\Wh}_\eta(M)=\Wh_\eta(\overline{M}), \overline{\Gamma}_\eta(M)=\Gamma_\eta(\overline{M})$ and so on.
\begin{theorem}
\label{thm: standards to standards nondegenerate case}
Let $\lambda\in \mf h^*$ and $\eta\in \mf n^*$ be non-degenerate. Assume that $M(\lambda)$ is simple. 
\begin{enumerate}
    \item There is an isomorphism of $(Z(\g)\otimes U(\mf n), A)$-bimodules $$ A_{\lambda - \rho} \cong\overline{\Wh}_\eta(M_A(\lambda)).$$
Where $Z(\g) \otimes U(\n)$ acts on the left hand side via
\[
z \otimes u \cdot a = (\chi_\lambda + \varphi \circ p)(z) \eta(u) a,
\]
where $\chi_\lambda$ and $p$ are as in \eqref{eq: inf char for g} and \eqref{eq: HC hom}. (See also Definition \ref{def: deformed standard Whittaker}).
    \item Assume that $A$ is a PID. Then there is an isomorphism of $(\g,A)$-bimodules $$M_A(\lambda,\eta)=U(\g)\otimes_{Z(\g)\otimes U(\n)}A_{\lambda - \rho} \cong\overline \Omega_\eta( M_A(\lambda))=\overline{\Gamma}_\eta(M_A(\lambda)).$$
\end{enumerate}
\end{theorem}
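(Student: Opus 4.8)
The plan is to reduce both statements to Kostant's description of Whittaker vectors in completed Verma modules \cite{Kostant}, using Proposition \ref{prop:descriptionofsimpledeformedvermaasnmodule} to transport everything to the ``coordinate model'' $U(\n)^*\otimes A$ and Nakayama's lemma over the local ring $A$ to bridge the deformed and non-deformed pictures. The hypothesis that $M(\lambda)$ is simple is what allows Proposition \ref{prop:descriptionofsimpledeformedvermaasnmodule} to be invoked — over $A$, over the residue field $\C$, and over $\operatorname{Frac}(A)$, where the relevant Verma modules are all simple (a generic deformed Verma module has simple generic fibre).

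For (1), I would first observe that the isomorphism $\beta$ of Proposition \ref{prop:descriptionofsimpledeformedvermaasnmodule} upgrades, on completions, to an isomorphism of $(\n,A)$-bimodules $\overline{M_A(\lambda)}\xrightarrow{\sim}\Hom_\C(U(\n),A)$, because $\beta$ identifies the deformed weight space $M_A(\lambda)^{\lambda-\rho-\mu}$ with $U(\n)^*_\mu\otimes A=\Hom_\C(U(\n)_\mu,A)$ and $\prod_\mu\Hom_\C(U(\n)_\mu,A)=\Hom_\C(U(\n),A)$; here $\n$ acts on $\Hom_\C(U(\n),A)$ by $(x\cdot f)(u)=-f(xu)$. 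It then suffices to compute $\Wh_\eta(\Hom_\C(U(\n),A))$: a functional $f$ lies in this space if and only if $f(xu)=-\eta(x)f(u)$ for all $x\in\n$, $u\in U(\n)$, which (using $U(\n)=\C\cdot 1\oplus\n U(\n)$) forces $f$ to be determined by $f(1)$, while $f_a:=a\cdot(\eta\circ S)$ — with $S$ the antipode of $U(\n)$ — is an explicit $\eta$-Whittaker functional with $f_a(1)=a$. Thus $\Wh_\eta(\Hom_\C(U(\n),A))\cong A$ via $f\mapsto f(1)$. Finally, $U(\n)$ acts on $\overline{\Wh}_\eta(M_A(\lambda))$ through $\eta$ by definition, and $Z(\g)$ acts by the deformed infinitesimal character, computed on the highest-weight line of $M_A(\lambda)$ exactly as in Definition \ref{def: deformed standard Whittaker}; matching these gives the asserted $(Z(\g)\otimes U(\n),A)$-bimodule isomorphism with $A_{\lambda-\rho}$.

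For (2), non-degeneracy of $\eta$ gives $\mf l_\eta=\p_\eta=\g$ and $\rho_\eta=\rho$, so unwinding Definition \ref{def: deformed standard Whittaker} yields the identification $M_A(\lambda,\eta)=U(\g)\otimes_{Z(\g)\otimes U(\n)}A_{\lambda-\rho}$ appearing in the statement. A generator $w_\eta$ of $\overline{\Wh}_\eta(M_A(\lambda))\cong A_{\lambda-\rho}$ from (1) satisfies the defining relations of $A_{\lambda-\rho}$, so $1\otimes 1\mapsto w_\eta$ induces a $(\g,A)$-bimodule map $\kappa\colon M_A(\lambda,\eta)\to\overline{M_A(\lambda)}$ with image $U(\g)\,\overline{\Wh}_\eta(M_A(\lambda))=\overline{\Omega}_\eta(M_A(\lambda))\subseteq\overline{\Gamma}_\eta(M_A(\lambda))$. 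To see $\kappa$ is injective with image all of $\overline{\Gamma}_\eta(M_A(\lambda))$, I would argue one filtration step at a time. Using $\overline{M_A(\lambda)}\cong\Hom_\C(U(\n),A)$ together with the fact that this is, up to the antipode twist, the $U(\n)$-module coinduced from $A$ — hence injective over $U(\n)$, since $A$ is a $\C$-vector space — the coinduction adjunction and \eqref{eq: Whittaker level} give
\[
\overline{\Gamma}_\eta(M_A(\lambda))_n\;\cong\;\Hom_\C\!\big(U(\n)/U_\eta(\n)^n,\,A\big),
\]
a free $A$-module of rank $r_n:=\dim_\C U(\n)/U_\eta(\n)^n$ whose formation commutes with $-\otimes_A\C$ and with $-\otimes_A\operatorname{Frac}(A)$. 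The finite pieces $U(\g)_{\leq n}\,w_\eta$ of $M_A(\lambda,\eta)$ are finitely generated over $A$, $\kappa$ carries them onto the corresponding pieces of $\overline{\Omega}_\eta(M_A(\lambda))$, and reduction modulo $\mf{m}$ turns $\kappa$ into the non-deformed comparison map, which by Kostant's theorem (for the simple Verma module $M(\lambda)$) is an isomorphism onto $\overline{\Omega}_\eta(M(\lambda))=\overline{\Gamma}_\eta(M(\lambda))$. Since the target $\overline{\Gamma}_\eta(M_A(\lambda))_n$ is free and the source is finitely generated over the PID $A$, Nakayama's lemma promotes this to an isomorphism on each filtration step, and passing to the union gives $M_A(\lambda,\eta)\xrightarrow{\sim}\overline{\Omega}_\eta(M_A(\lambda))=\overline{\Gamma}_\eta(M_A(\lambda))$.

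The main obstacle is the bookkeeping in (2). One must match the $U(\g)_{\leq\bullet}$-filtration of $M_A(\lambda,\eta)$ with Kostant's Whittaker-degree filtration of $\overline{\Gamma}_\eta(M_A(\lambda))$ (up to an index shift), and one must be careful that the completion functor $\overline{(-)}$ does \emph{not} commute with $-\otimes_A\C$ in general; it is essential that the finite truncations $\overline{\Gamma}_\eta(-)_n$, being finite $\Hom$-modules, do. The hypothesis that $A$ is a PID enters precisely here: it guarantees that submodules of the finitely generated modules $U(\g)_{\leq n}w_\eta$ are again finitely generated (and that torsion-free finitely generated $A$-modules are free), which is what makes the step-by-step Nakayama argument go through.
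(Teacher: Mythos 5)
Your proposal is correct and follows essentially the same route as the paper: part (1) via Proposition \ref{prop:descriptionofsimpledeformedvermaasnmodule}, completion to $\Hom_\C(U(\n),A)$, and the rank-one computation of Whittaker functionals; part (2) via the multiplication map, reduction modulo $\mf m$ to Kostant's theorem, and Nakayama plus freeness of finitely generated torsion-free modules over the local PID $A$ applied to the finite filtration steps $\Omega_n$ and $\Gamma_n\cong\Hom_\C(U(\n)/U_\eta(\n)^n,A)$. The only cosmetic differences are your explicit Whittaker functional $\eta\circ S$ in place of the decomposition $U(\n)=\C\cdot 1\oplus U_\eta(\n)$, and your single filtration-matching step where the paper runs the Nakayama argument twice (first on the $\Omega_\bullet$ levels to get injectivity and the image, then on the $\Gamma_\bullet$ levels to get surjectivity).
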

\begin{proof}
(1) By Proposition \ref{prop:descriptionofsimpledeformedvermaasnmodule} there is an isomorphism of $(\n,A)$-bimodules $$\overline{M_A(\lambda)}\cong \overline{U(\mf n)^*\otimes A}=\Hom_\C(U(\mf n),A).$$ Now $U(\n)=\C \cdot 1 \oplus U_{\eta}(\mf n)$ and $U(\mf n)/U_{\eta}(\mf n)\cong \C.$ We obtain
$$\overline{\Wh}_\eta(M_A(\lambda))=\Wh_\eta(\Hom_\C(U(\n),A))=\Hom_\C(U(\n)/U_{\eta}(\n),A)\cong A_\eta$$
as a $(\n,A)$-bimodule. (Here $A_\eta$ is the free rank $1$ $A$-module with $\n$-action given by $\eta$.) Now $Z(\mf g)$ acts on $M_A(\lambda)$ and $\overline{\Wh}_\eta(M_A(\lambda))$ in the same way. The first statement follows.

(2) Abbreviate $W=\overline{\Wh}_\eta(M_A(\lambda)),$ $M=U(\g)\otimes_{Z(\g)\otimes U(\n)}W$, and $N=\overline\Gamma_\eta(M_A(\lambda)).$
Then $M=\Gamma_\eta(M)=\Omega_\eta(M)$ and $N=\Gamma_\eta(N)\supseteq \Omega_\eta(N).$

Multiplication induces a map of $(\g,A)$-bimodules $$\phi:M\to \Omega_\eta(N)\hookrightarrow N.$$ 
The map $\phi$ becomes an isomorphism mod $\mf m$ by \cite[Thm. 3.8, Lem. 3.9, Thm. 4.4]{Kostant}. As an $A$-module, $M$ is torsion free since  $U(\mf g)$ is free as an $Z(\mf g)\otimes U(\mf n)$-module \cite[Lem. 4.1]{BRpairings} and $N$ is a torsion free $A$-module because $M_A(\lambda)$ is torsion free. Now $\phi$ induces an isomorphism $\Omega_\eta(M)_n\cong  \Omega_\eta(N)_n$ by Nakayama's lemma: The statement is true when reducing modulo $\mf m,$ and both modules are finitely generated free $A$-modules. Here we use that a finitely generated torsion free module over the local PID $A$ is free.
Hence $\phi$ is injective with image $\Omega_\eta(N).$ The isomorphism $\overline{M_A(\lambda)}\cong \Hom_\C(U(\n),A)$ shows that 
\begin{align*}
    \Gamma_\eta(N)_n&\cong\Hom_{U(\mf n)}(U(\mf n)/U_\eta(\mf n)^n,\Hom_\C(U(\mf n), A))\\
    &\cong \Hom(U(\mf n)/U_\eta(\mf n)^n, A)
\end{align*}
is finitely generated as an $A$-module. It follows that $\Gamma_\eta(M)_n$ is also finitely generated. We can hence apply Nakayama's lemma again to see that $\phi$ induces an isomorphism $\Gamma_\eta(M)_n\cong \Gamma_\eta(N)_n.$ It follows that $\phi$ is an isomorphism.
\end{proof}

\begin{remark}
The statements probably hold true without assuming that $M(\lambda)$ is simple or that $A$ is a PID. However, the condition that $A$ is local is essential.
\end{remark}

\begin{theorem}
\label{thm: deformed Vermas go to standard Whittakers}
Let $\lambda\in \mf h^*$ such that the $\mf{l}_\eta$-Verma module of highest weight $\lambda - \rho_\eta$, $M^{\levi_\eta}(\lambda)$, is simple. Assume that $A$ is a PID. 
Then there is an isomorphism
$$M_A(\lambda,\eta)\cong \overline\Gamma_\eta(M_A(\lambda)).$$
\end{theorem}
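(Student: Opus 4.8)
The plan is to reduce the general (partially degenerate) case to the nondegenerate case already handled in Theorem \ref{thm: standards to standards nondegenerate case}, by factoring the deformed Backelin functor $\overline{\Gamma}_\eta$ through the Levi subalgebra $\mf{l}_\eta$. The key observation is that $\eta$, when restricted to $\mf{n}_\eta$, is a \emph{nondegenerate} character of $\mf{n}_\eta$, so Theorem \ref{thm: standards to standards nondegenerate case} applies internally to $\mf{l}_\eta$-modules. First I would analyse the partial weight space decomposition of $M_A(\lambda)$ with respect to the center $\mf{h}^\eta$ of $\mf{l}_\eta$ (the deformed analogue of Proposition \ref{prop: properties of standard Whittakers}(iv)). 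Writing $M_A(\lambda) = \bigoplus_{\nu} M_A(\lambda)^{\nu}$ for the $\mf{h}^\eta$-decomposition (where now the deformation also enters through $\varphi$), each summand should be, as a $(\mf{l}_\eta, A)$-bimodule, a finite direct sum of copies of deformed $\mf{l}_\eta$-Verma modules $M_A^{\levi_\eta}(\mu)$ tensored with a free $A$-module coming from $U(\bar{\mf{n}}^\eta)$; the crucial point is that the summand generated by the highest weight line is exactly $M_A^{\levi_\eta}(\lambda)$, which is simple by hypothesis after reduction mod $\mf m$ (one needs that simplicity of $M^{\levi_\eta}(\lambda)$ is preserved, which holds since simplicity is detected on the reduction and $M_A^{\levi_\eta}$ is free over $A$).

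Next I would pass to completions. The completion $\overline{M_A(\lambda)}$ with respect to \emph{all} $\mf h^*$-weights factors as a product over $\mf h^{\eta *}$-weights of completions of the individual $\mf{l}_\eta$-blocks; concretely $\overline{M_A(\lambda)} \cong \prod_{\nu} \overline{M_A(\lambda)^{\nu}}$ where the bar on the right is completion as an $\mf{l}_\eta$-module. Since $U_\eta(\mf n)$ decomposes compatibly with $\mf{n}^\eta$ and $\mf{n}_\eta$, and $\eta$ vanishes on $\mf{n}^\eta$, taking generalised Whittaker vectors $\Gamma_\eta$ commutes with this decomposition: $\overline{\Gamma}_\eta(M_A(\lambda)) \cong \bigoplus_{\nu} \overline{\Gamma}_{\eta|_{\mf n_\eta}}\big(M_A(\lambda)^{\nu}\big)$, with the sum now being a direct sum because the $\mf{h}^\eta$-grading is bounded above and $\mf{n}^\eta$ shifts weights strictly. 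Applying Theorem \ref{thm: standards to standards nondegenerate case}(2) (in the form valid for the reductive Lie algebra $\mf{l}_\eta$, whose derived subalgebra is semisimple, with nondegenerate character $\eta|_{\mf n_\eta}$ and the local PID $A$) to each block that is a copy of a deformed $\mf{l}_\eta$-Verma module, one gets that $\overline{\Gamma}_{\eta|_{\mf n_\eta}}\big(M_A(\lambda)^{\nu}\big)$ is the corresponding deformed standard $\mf{l}_\eta$-Whittaker module, i.e. $U(\mf l_\eta) \otimes_{Z(\mf l_\eta)\otimes U(\mf n_\eta)} A_{\mu}$ for the appropriate $\mu$. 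Reassembling over $\nu$ and using that induction $U(\g)\otimes_{U(\mf p_\eta)}(-)$ is the free operation building $M_A(\lambda,\eta)$ from its $\mf{h}^\eta$-weight blocks, together with Definition \ref{def: deformed standard Whittaker}, yields the claimed isomorphism $M_A(\lambda,\eta) \cong \overline{\Gamma}_\eta(M_A(\lambda))$.

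I expect the main obstacle to be the bookkeeping in the second step: one must check carefully that completion, the functor $\Gamma_\eta$, and the $\mf h^\eta$-decomposition interact so that the result is a genuine direct sum (not merely a direct product) and that each $\mf{h}^\eta$-weight block is \emph{itself} a completed $\mf{l}_\eta$-module of the form to which Theorem \ref{thm: standards to standards nondegenerate case} literally applies --- in particular that within each block the $\mf{l}_\eta$-completion is the relevant one, and that the $Z(\mf l_\eta)$- and $Z(\g)$-actions match up with the correct $\rho$- and $\rho_\eta$-shifts so that the infinitesimal characters appearing in Definition \ref{def: deformed standard Whittaker} come out right. A secondary technical point is verifying that the hypothesis ``$M^{\levi_\eta}(\lambda)$ simple'' is exactly what is needed to invoke Theorem \ref{thm: standards to standards nondegenerate case}, and that it does \emph{not} require $M(\lambda)$ itself to be simple; this should follow because the nondegenerate theorem is applied only inside $\mf{l}_\eta$, where the relevant Verma module is precisely $M^{\levi_\eta}(\lambda)$. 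Once these compatibilities are pinned down, the argument is essentially a reorganisation of Theorem \ref{thm: standards to standards nondegenerate case} along the parabolic $\mf{p}_\eta$.
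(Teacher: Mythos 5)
Your overall strategy is the same as the paper's: decompose $M_A(\lambda)$ into weight spaces for the centre of $\levi_\eta$, apply the nondegenerate Theorem~\ref{thm: standards to standards nondegenerate case} inside $\levi_\eta$ to each piece, and reassemble via the multiplication map $M_A(\lambda,\eta)\to\overline\Gamma_\eta(M_A(\lambda))$. You also correctly isolate which hypothesis is used where (simplicity of $M^{\levi_\eta}(\lambda)$ only, via the internal application to $\levi_\eta$). But the step you yourself flag as the main obstacle --- that $\Gamma_\eta$ of the full completion collapses to a \emph{direct sum} over $\mf h^{\eta}$-weights of the $\levi_\eta$-completions --- is a genuine gap as you have argued it. Your justification is that ``the $\mf h^\eta$-grading is bounded above and $\mf n^\eta$ shifts weights strictly,'' but this does not suffice: for $X\in\mf n^\eta$ the condition $X^k m=0$ on an element $m=(m_\nu)_\nu$ of the product decouples into conditions on each component $m_\nu$ separately (since $X^k$ maps distinct $\mf h^\eta$-graded pieces to distinct graded pieces), so it imposes no bound on the support of $m$. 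The paper resolves this by introducing the \emph{complementary} Levi $\levi_2\supset\mf n_2$ attached to the simple roots on which $\eta$ vanishes, factoring $\Gamma_\eta=\Gamma^1_\eta\circ\Gamma^2_0$, and using that $\mf n_2$-finite vectors in the completion of an object of category $\mc O_A$ for $\levi_2$ are supported in finitely many weight spaces (the highest-weight-vector argument of Backelin). Some device of this kind is needed; without it your direct sum is only a direct product and the multiplication map need not be surjective.

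A secondary inaccuracy: the $\mf h^\eta$-weight spaces of $M_A(\lambda)$ are not finite direct sums of deformed $\levi_\eta$-Verma modules. They are of the form $U(\overline{\mf u}_\eta)_{\nu}\otimes_\C M_A^{\levi_\eta}(\lambda)$ with $\overline{\mf u}_\eta$ carrying the adjoint $\levi_\eta$-action, i.e.\ a finite-dimensional $\levi_\eta$-module tensored with a single deformed Verma; such a tensor product has a Verma filtration but is not in general a sum of Vermas, so Theorem~\ref{thm: standards to standards nondegenerate case} does not apply to it verbatim. The paper gets around this by invoking the fact that $\Gamma_\eta$ commutes with tensoring by finite-dimensional modules (Arias--Backelin), which reduces the computation of $\overline\Gamma^1_\eta$ of each weight space to $\overline\Gamma^1_\eta(M_A^{\levi_\eta}(\lambda))$, where the nondegenerate theorem and the simplicity hypothesis enter. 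You would need to add this ingredient (or argue via the Verma filtration and exactness of $\overline\Gamma_\eta$, which is more work) to make your second step go through.
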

\begin{proof}
Our proof uses techniques similar to the proof of \cite[Lem. 6.5]{Backelin}. 

We first fix some notation. For $i=1,2$ denote by $\mf l_i\subset \mf p_i\subset \mf g$ the Levi and standard parabolic corresponding to the simple roots $\alpha$ with $\eta(X_\alpha)\neq 0$ and  $\eta(X_\alpha)= 0,$ respectively. In particular, $\levi_1=\levi_\eta$ in the notation of \eqref{eq: all the eta subalgebras}. Let $\rho_i$ be the half-sum of positive roots in the corresponding root systems.
Denote the opposite parabolic by $\overline{\mf p}_i.$ Let $\mf n_i=\mf n \cap \mf l_i$ and $\overline{\mf n}_i=\overline{\n} \cap \mf l_i.$ Denote by $\overline{\Gamma}_\eta^i$ the functor \eqref{eq: generalised Whittaker vectors} for $\eta|_{\mf{n}_i}$.
Then the nil radical of $\overline{\mf p}_i$ is $\overline{\mf u}_i=\overline{\mf n_i} +[\overline{\mf n_1},\overline{\mf n_2}]$ and $\mf l_i$ acts on it with the adjoint action. Moreover, denote by $\mf z_i=\mf z(\mf l_i)$ the respective centers. It follows that $\mf h=\mf z_1 +\mf z_2.$ 

Abbreviate $M=M_A(\lambda).$ For $\lambda_i\in \mf z_i^*$ denote by $M^{\lambda_i}$ the corresponding deformed weight spaces. Hence, we obtain the deformed $\mf h$-weight spaces of $M$ as $M^{\lambda_1,\lambda_2}=M^{\lambda_1}\cap M^{\lambda_2}.$ 

The deformed weight spaces with respect to $\mf z_i$ admit the following explicit description. We have
\begin{align*}
    M&=U(\mf g)\otimes_{U(\mf b)}A_{\lambda - \rho}\\
    &\cong U(\mf g)\otimes_{U(\mf p_i)}U(\mf l_i)\otimes_{U(\mf l_i\cap \mf b)}A_{\lambda - \rho}\\
    &= U(\mf g)\otimes_{U(\mf p_i)} M_A^{\mf l_i}(\lambda)\\
    &= U(\overline{\mf u}_i)\otimes_\C M_A^{\mf l_i}(\lambda)
\end{align*}
where the last isomorphism holds on the level of $(\mf l_i,A)$-bimodules and $M_A^{\mf{l}_i}(\lambda)$ refers to the deformed Verma module for $\mf{l}_i$ of highest weight $\lambda - \rho_i$. We hence obtain for $\mu_i \in \mf{z}_i^*$ the explicit description
$M^{\mu_i}=U(\overline{\mf u}_i)_{\mu_i-(\lambda-\rho)|_{\mf z_i}}\otimes M_A^{\mf l_i}(\lambda).$ (This follows from the description of $\mf{z}_1$ in \eqref{eq: center of levi} and the analogous statement for $\mf{z}_2$.)

In particular, for the $\mf{l}_1$-module $M^{\mu_1}$, we have 
\begin{align*}
    \overline\Gamma^1_\eta(M^{\mu_1})
    &=\overline\Gamma^1_\eta(U(\overline{\mf u}_1)_{\mu_1-(\lambda-\rho)|_{\mf z_1}}\otimes M_A^{\mf l_1}(\lambda))\\
    &=U(\overline{\mf u}_1)_{\mu_1-(\lambda-\rho)|_{\mf z_1}}\otimes \overline\Gamma^1_\eta(M^{\mf l_1}_A(\lambda))\\
    &=U(\overline{\mf u}_1)_{\mu_1-(\lambda-\rho)|_{\mf z_1}}\otimes M_A^{\mf{l}_1}(\lambda, \eta)
\end{align*}
where the second equality follows since  $U(\overline{\mf u}_1)_{\mu_1-(\lambda-\rho)|_{\mf z_1}}$ is finite-dimensional and $\Gamma^1_\eta$ commutes with tensor products with finite-dimensional modules by \cite[Prop. 3]{AriasBackelin} (there the statement is proven in the undeformed case, but translates unchanged), and the third equality follows from Theorem \ref{thm: standards to standards nondegenerate case}, using that $\eta$ is non-degenerate for $\mf{l}_1$ and $M^{\mf{l}_1}(\lambda)$ is assumed to be simple. 

Moreover, for a $\mf{z}_2$-weight $\mu_2 \in \mf{z}_2^*$, the deformed weight space $M^{\mu_2}$ is $\mf{l}_2$-stable, and we have
\begin{align*}
    \overline{\Gamma}^2_\eta(M^{\lambda_2})
    &=U(\overline{\mf u}_2)_{\mu_2-(\lambda-\rho)|_{\mf z_2}}\otimes \overline \Gamma^2_\eta(M^{\mf l_2}(\lambda))\\
    &=U(\overline{\mf u}_2)_{\mu_2-(\lambda-\rho)|_{\mf z_2}} \otimes \overline{\Gamma}^2_0(M_A^{\mf{l}_2}(\lambda)) \\
    &= M^{\mu_2}.
\end{align*}
Here the second equality follows from the fact that $\eta$ vanishes on $\mf{n}_2$ and the third equality holds because there can only be finitely many deformed weight spaces containing highest weight vectors, so $\mf{n}_2$-finite vectors can only be supported in finitely many weight spaces. See \cite[Rmk. 3.3]{Backelin}.

Hence, in total we get an inclusion
\begin{align*}
    \overline\Gamma_\eta(M)&=\Gamma_\eta(\overline M)=
    \Gamma_\eta^{1}\Gamma_0^{2}(\prod_{\mu_2}\prod_{\mu_1}M^{\mu_1,\mu_2})\\
    &\subseteq \Gamma_\eta^{1}(\prod_{\mu_2}\overline\Gamma_0^{2}(M^{\mu_2}))=\Gamma_\eta^{1}(\prod_{\mu_2}M^{\mu_2})\\
    &=\Gamma_\eta^{1}(\prod_{\mu_2}\bigoplus_{\mu_1}M^{\mu_1,\mu_2})=\bigoplus_{\mu_1}\overline\Gamma_\eta^{1}(M^{\mu_1})\\ &=\bigoplus_{\mu_2}U(\overline{\mf u}_1)_{\lambda_1-(\lambda-\rho)|_{\mf z_1}}\otimes_\C M^{\mf{l}_1}_A(\lambda, \eta).
\end{align*}
In particular, by considering the summand $U(\overline{\mf u}_1)_{0}\otimes_\C M^{\mf{l}_1}_A(\lambda, \eta)=M_A^{\mf{l}_1}(\lambda, \eta),$ we obtain a natural multiplication map
$$\phi: M_A(\lambda,\eta)=U(\mf g)\otimes_{U(\mf p)} M^{\mf{l}_1}_A(\lambda, \eta)\to \overline\Gamma_\eta(M)$$
By writing $M_A(\lambda,\eta)=U(\overline{\mf u}_1)\otimes M^{\mf{l}_1}_A(\lambda, \eta)$ we see that $M_A(\lambda,\eta)$ has the same decomposition as the space containing $\overline\Gamma_\eta(M)$ considered above. It follows that $\phi$ is an isomorphism.
\end{proof}
\begin{remark} Actually, the same proof shows that $$\overline\Gamma_\eta(U(\mf g)\otimes_{U(\mf p_1)} M)=U(\mf g)\otimes_{U(\mf p_1)}\overline\Gamma^{1}_\eta(M)$$ for every module $M$ in deformed category $\cO_A$ of $\mf l_1.$ For this one uses that for each module $N$ in category $\cO_A(\mf g)$ we have $N^{\mu_2}$ is in category $\cO_A(\mf l_2)$ for each $\lambda_2\in \mf z_2^*,$ see \cite[Thm. 2.3.2]{eberhardtGradedGeometricParabolic2018}. 
\end{remark}

\subsection{The Jantzen conjectures via Category $\mc{O}$} 
\label{sec: Jantzen conjecture via category O}

Using the results of Section \ref{sec: deformed Backelin}, we can prove the Jantzen conjectures for Whittaker modules (Theorem \ref{thm: Jantzen conjecture Whittaker}) using the Jantzen conjectures for Verma modules (Theorem \ref{thm: Jantzen conjecture}). We dedicate this section to doing so.

We have established that for a local PID $A$ with distinguished homomorphism $\varphi: U(\mf{h}) \rightarrow A$ sending $\mf{h}$ into the maximal ideal $\mf{m}$, the functor 
\[
\overline{\Gamma}_\eta: \mc{O}_A \rightarrow \mf{g} \mathrm{-mod-}A
\]
of \eqref{eq: generalised Whittaker vectors} sends $M_A(\lambda)$ to $M_A(\lambda, \eta)$, where $\lambda \in \mf{h}^*$ is chosen such that the $\mf{l}_\eta$-Verma module $M^{\mf{l}_\eta}(\lambda)$ is simple (Theorem \ref{thm: deformed Vermas go to standard Whittakers}). For every $M_A(\lambda, \eta)$, such a $\lambda$ exists by \eqref{eq: standard Whittakers in an orbit are isomorphic}. 

Moreover, $\overline{\Gamma}_\eta$ also sends $M_A^\vee(\lambda)$ to $M_A^\vee(\lambda, \eta)$. This follows from the fact that the completion \eqref{eq: completion definition} of a deformed dual Verma module is canonically isomorphic to the $A$-linear dual of a deformed Verma module; i.e.
\[
\overline{M_A^\vee(\lambda)} = \Hom_A(M_A(\lambda), A).
\]
Hence the functor $\overline{\Gamma}_\eta$ sends the canonical $(\mf{g},A)$-module homomorphism 
\[
\psi: M_A(\lambda) \rightarrow M_A^\vee(\lambda)
\]
of \eqref{eq: canonical map of vermas} to the canonical $(\mf{g},A)$-module homomorphism 
\[
\psi_\eta: M_A(\lambda, \eta) \rightarrow M_A^\vee(\lambda, \eta)
\]
of \eqref{eq: canonical map deformed Whittaker}. 

By construction, the functor $\overline{\Gamma}_\eta$ has the property that for any $a \in A$ and $(\mf{g}, A)$-bimodule $M$, 
\[
\overline{\Gamma}_\eta(Ma) = \overline{\Gamma}_\eta(M)a,
\]
so $\overline{\Gamma}_\eta$ maps the Jantzen filtration of $M_A(\lambda)$ (Definition \ref{def: Jantzen filtration of Verma}) to the Jantzen filtration of $M_A(\lambda, \eta)$ (Definition \ref{def: Jantzen filtration of a standard Whittaker}). Specializing to the local $\C$-algebra in \eqref{eq: completion of T}, this proves Theorem \ref{thm: Jantzen conjecture Whittaker}.(ii). 

In \cite[Cor. 2]{AriasBackelin}, it is shown that the functor $\overline{\Gamma}_\eta$ is a quotient functor onto its essential image. In particular, this implies that all morphisms in $\mc{N}$ between standard Whittaker modules are obtained from morphisms between Verma modules. Moreover, because $\overline{\Gamma}_\eta$ is exact, it sends injective morphisms to injective morphisms. Hence all embeddings of standard Whittaker modules are of the form 
\[
M(\mu, \eta) \xhookrightarrow{\overline{\Gamma}_\eta(\iota)} M(\lambda, \eta),   
\]
where $\iota: M(\mu) \hookrightarrow M(\lambda)$ is an embedding in category $\mc{O}$. Hence Theorem \ref{thm: Jantzen conjecture Whittaker}.(i) follows from Theorem \ref{thm: Jantzen conjecture}(i). 

\section{$\mc{D}$-Modules}
\label{sec: D-modules}
\subsection{Preliminaries} \label{sec: preliminaries} We start by listing our geometric conventions and recording the necessary background results. 
\subsubsection{$\mc{D}$-module notation and conventions}
\label{sec: D-module motation and conventions}
We adopt the following notational conventions for $\mc{D}$-modules. Our conventions mostly\footnote{In places where they differ, we explicitly state definitions. If definitions are not explicitly stated, they are the same as the sources above.} follow \cite{D-modulesnotes, Localization}. 
\begin{itemize}
    \item If $Y$ is a smooth algebraic variety, $\mc{O}_Y$ is the sheaf of regular functions on $Y$ and $\mc{D}_Y$ is the sheaf of differential operators on $Y$. 
    \item We denote by $\mc{M}_{qc}(\mc{D}_Y), \mc{M}_{coh}(\mc{D}_Y), \mc{M}_{hol}(\mc{D}_Y)$ the categories of quasi-coherent, coherent, and holonomic $\mc{D}_Y$-modules on Y, respectively. We denote by $D^b_{qc}(\mc{D}_Y), D^b_{coh}(\mc{D}_Y), D^b_{hol}(\mc{D}_Y)$ the corresponding bounded derived categories. 
    \item For a morphism $f:Y \rightarrow Z$ of smooth algebraic varieties, we have functors 
    \begin{align*}
        f^+: \mc{M}_{qc}(\mc{D}_Z) &\rightarrow \mc{M}_{qc}(\mc{D}_Y) \\
        f^!: D^b_{qc}(\mc{D}_Z) &\rightarrow D^b_{qc}(\mc{D}_Y) \\
        f_+: D^b_{qc}(\mc{D}_Y) &\rightarrow D^b_{qc}(\mc{D}_Z) \\
        f_!: D^b_{coh}(\mc{D}_Y) &\rightarrow D^b_{coh}(\mc{D}_Z) 
    \end{align*}
    The functors $f^+, f^!, f_+$ are defined in the standard way (see \cite{D-modulesnotes}), and the $!$-pushforward is the twist of the $+$-pushforward by duality:
    \[
    f_! := \mathbb{D}_Z \circ f_+ \circ \mathbb{D}_Y,
    \]
    for the duality functor $\mathbb{D}_\Box: D^b_{coh}(\mc{D}_\Box) \rightarrow D^b_{coh}(\mc{D}_\Box)$, $\Box = Y, Z$, given by 
    \[
    \mathbb{D}_\Box (\mc{V}^\bullet) := RHom_{\mc{D}_\Box}(\mc{V}^\bullet, \mc{D}_\Box) [\dim \Box].
    \]
    \item If $i:Y \rightarrow Z$ is an immersion, then $i_+$ is the right derived functor of the left exact functor 
    \[
    H^0 \circ i_+ \circ [0]: \mc{M}_{qc}(\mc{D}_Y) \rightarrow \mc{M}_{qc}(\mc{D}_Z),
    \]
    where $[0]: \mc{M}_{qc}(\mc{D}_Y) \rightarrow D^b_{qc}(\mc{D}_Y)$ sends a $\mc{D}_Y$-module $\mc{V}$ to the complex $0 \rightarrow \cdots \rightarrow 0 \rightarrow \mc{V} \rightarrow 0 \rightarrow \cdots 0$ concentrated in degree $0$. In this setting, we refer to $H^0 \circ i_+ \circ [0]$ as $i_+$. 
    \item If $\mc{V} \in \mc{M}_{hol}(\mc{D}_Y)$, then $\mathbb{D}_Y(\mc{V}[0]) \in D^b_{hol}(\mc{D}_Y)$ has holonomic cohomology, and $H^p(\mathbb{D}_Y(\mc{V}[0]))=0$ for all $p \neq 0$. In this case, $\mathbb{D}_Y$ descends to a functor on holonomic $\mc{D}_Y$-modules, which we refer to by the same symbol: 
    \begin{align*}
    \mathbb{D}_Y: \mc{M}_{hol}(\mc{D}_Y) &\rightarrow \mc{M}_{hol}(\mc{D}_Y)\\
    \mc{V} &\mapsto H^0( RHom_{\mc{D}_Y}(\mc{V}[0], \mc{D}_Y) [\dim Y])
    \end{align*}
    \item Hence if $i:Y \rightarrow Z$ is an immersion, we can consider $i_!$ as a functor on holonomic modules:
    \[
    i_!: \mc{M}_{hol}(\mc{D}_Y) \rightarrow \mc{M}_{hol}(\mc{D}_Z).
    \]
    \item If $f:Y \rightarrow Z$ is proper, then $f_+$ preserves the derived category of coherent $\mc{D}$-modules, and on this category $f_! = f_+$. In particular, if $i:Y \rightarrow Z$ is a closed immersion, then the functors $i_!$ and $i_+$ agree on holonomic $\mc{D}_Y$-modules:
\[
i_+=i_!:\mc{M}_{hol}(\mc{D}_Y) \rightarrow \mc{M}_{hol}(\mc{D}_Z).
\]
    \item If $Y$ admits an action by an algebraic group $K$ with finitely many orbits, then we denote by $\mc{M}_{qc}(\mc{D}_Y, K)$ the category of strongly $K$-equivariant quasicoherent $\mc{D}_Y$-modules, and by $\mc{M}_{qc}(\mc{D}_Y, K)_\mathrm{weak}$ the category of weakly $K$-equivariant quasicoherent $\mc{D}_Y$-modules. (See \cite{MP} for a reference on strong and weak equivariance.) 
\end{itemize}

\subsubsection{The flag variety and base affine space}
\label{sec: the flag variety and base affine space}

Let $\mf{g}$ be a complex semisimple Lie algebra, and denote by $G$ the simply connected semisimple Lie group corresponding to $\mf{g}$. Fix a Borel subgroup $B \subset G$, and denote by $N \subset B$ the unipotent radical of $B$. The abstract torus 
\begin{equation}
    \label{eq: H}
    H:= B/N
\end{equation}
does not depend on choice of $B$ \cite[Lem. 6.1.1]{chrissginzburg}. Set 
\begin{equation}
    \label{eq: flag variety and base affine space}
    X:= G/B \text{ and } \widetilde{X}:=G/N.
\end{equation}
The variety $X$ is the flag variety of $\mf{g}$, and we refer to the variety $\widetilde{X}$ as {\bf base affine space}. Both $X$ and $\widetilde{X}$ have left actions of $G$ coming from left multiplication. There is also a natural right action of $H$ on $\widetilde{X}$ by right multiplication, and with respect to this action, the quotient map
\begin{equation}
    \label{eq: projection onto flag variety}
    \pi: \widetilde{X} \rightarrow X
\end{equation}
is a principal $G$-equivariant $H$-bundle. 
\subsubsection{Differential operators on base affine space and the universal enveloping algebra}
\label{sec: differential operators on base affine space and the universal enveloping algebra}
Global differential operators on $\widetilde{X}$ are related to the extended universal enveloping algebra. More specifically, by differentiating the left $G$-action and right $H$-action described above (which clearly commute), one obtains an algebra homomorphism 
\[
U(\mf{g}) \otimes_\C U(\mf{h}) \rightarrow \Gamma(\widetilde{X}, \mc{D}_{\widetilde{X}}).
\]
This homomorphism factors through the quotient \cite[\S2]{BBJantzen}
\begin{equation}
    \label{eq: extended universal enveloping algebra}
    \widetilde{U}:= U(\mf{g}) \otimes_{Z(\mf{g})} U(\mf{h}).
\end{equation}
Here $Z(\mf{g})$ acts on $U(\mf{h})$ via the composition of the Harish-Chandra homomorphism \eqref{eq: HC hom} with the $\rho$-shift endomorphism
\begin{equation}
    \tau: U(\h) \rightarrow U(\h)
\end{equation}
which sends $h \in \h$ to $\tau(h)=h - \rho(h)$. We refer to the algebra $\widetilde{U}$ as the {\bf extended universal enveloping algebra}. By construction, $\widetilde{U}$ contains both $U(\g)$ and $U(\h)$ as subalgebras, and the subalgebra $U(\h) \subset \widetilde{U}$ is the center. Moreover, there is a natural action of $W$ on $\widetilde{U}$ coming from the $W$-action\footnote{Recall that it is our convention to work with the usual action of $W$ on $\h^*$, not the dot action.} on $U(\h)$, and the Harish-Chandra isomorphism\footnote{Here $p$ is the projection in \eqref{eq: HC hom}.} $\tau \circ p: Z(\mf{g}) \xrightarrow{\simeq} U(\h)^W$ guarantees that $\widetilde{U}^W \simeq U(\mf{g})$.   

Global sections of $\mc{D}_{\widetilde{X}}$-modules have the structure of $\widetilde{U}$-modules via the map 
\begin{equation}
    \label{eq: alpha}
    \alpha: \widetilde{U} \rightarrow \Gamma(\widetilde{X}, \mc{D}_{\widetilde{X}}).
\end{equation}

\subsubsection{Monodromic $\mc{D}$-Modules}
\label{sec: monodromic D-modules}
The $\mc{D}$-modules which arise in the geometric construction of the Jantzen filtration have an additional structure - they are $H$-monodromic. Using $H$-monodromic $\mc{D}$-modules on base affine space, one can study $\mf{g}$-modules of generalized infinitesimal character. (In contrast, using the more classical approach of modules over sheaves of twisted differential operators on the flag variety, one can only study $\mf{g}$-modules of strict infinitesimal character.) 

Recall the principal $H$-bundle $\pi:\widetilde{X} \rightarrow X$ from \eqref{eq: projection onto flag variety}. An {\bf $H$-monodromic $\mc{D}$-module on $X$} is a weakly $H$-equivariant $\mc{D}_{\widetilde{X}}$-module. Such $\mc{D}$-modules have an alternate characterization in terms of $H$-invariant sections of $\pi_\bullet \mc{D}_{\widetilde{X}}$. Specifically, the right $H$-action on $\widetilde{X}$ induces an $H$-action on $\pi_\bullet \mc{D}_{\widetilde{X}}$ by algebra automorphisms, where $\pi_\bullet$ denotes the sheaf-theoretic direct image functor. We define 
\begin{equation}
\label{eq: D tilde}
\widetilde{\mc{D}}:= [\pi_\bullet \mc{D}_{\widetilde{X}}]^H \subset \pi_\bullet \mc{D}_{\widetilde{X}}
\end{equation}
to be the sheaf of $H$-invariant sections of $\pi_\bullet\mc{D}_{\widetilde{X}}$. This is a sheaf of algebras. There is an equivalence of categories\footnote{Here $\mc{M}_{qc}(\widetilde{\mc{D}})$ denotes the category of $\widetilde{\mc{D}}$-modules which are quasicoherent as $\mc{O}_X$-modules.} \cite[\S1.8.9, 2.5]{BBJantzen}
\begin{equation}
    \label{eq: two notions of monodromicity}
    \mc{M}_{qc}(\mc{D}_{\widetilde{X}}, H)_{\mathrm{weak}} \simeq \mc{M}_{qc}(\widetilde{\mc{D}}).
\end{equation}

Moreover, because the left $G$-action and right $H$-action commute, the image of the homomorphism $\alpha$ in \eqref{eq: alpha} is contained in $\Gamma(\widetilde{X}, \mc{D}_{\widetilde{X}})^H = \Gamma(X, \widetilde{\mc{D}})$. The resulting map is an algebra isomorphism \cite{BoBr}
\begin{equation}
    \label{eq: alpha isomorphism version}
    \alpha: \widetilde{U} \xrightarrow{\simeq} \Gamma(X, \widetilde{\mc{D}}). 
\end{equation}
Hence the global sections of $H$-monodromic $\mc{D}$-modules on $X$ have the structure of $\widetilde{U}$-modules. 

\begin{remark} \label{rem: relationship to TDOs}
    (Relationship to twisted differential operators) There is a natural embedding $S(\mf{h})\hookrightarrow \widetilde{\mc{D}}$ coming from the action of the Lie algebra of $H$, whose image lies in (and, in fact, is equal to) the center of $\widetilde{\mc{D}}$ \cite[\S2.5]{BBJantzen}. For $\lambda \in \mf{h}^*$, denote by $\mf{m}_\lambda \subset S(\mf{h})$ the maximal ideal corresponding to $\lambda + \rho$\footnote{Our $\rho$-shift conventions are chosen so that the definition of $\mc{D}_\lambda$ aligns with that in \cite{Localization, TwistedSheaves, Romanov, BrownRomanov}. These conventions differ from those in \cite{BBJantzen} by a negative $\rho$-shift.}. The sheaf $\mc{D}_\lambda:= \widetilde{D}/\mf{m}_\lambda \widetilde{\mc{D}}$ is a sheaf of twisted differential operators (TDOs) on the flag varity $X$. Therefore, $\widetilde{\mc{D}}$-modules on which $\mf{m}_\lambda$ acts trivially can be identified with $\mc{D}_\lambda$-modules. 
\end{remark}

The relationships described above can be summarized in the following commutative diagram. 
\begin{equation}
\label{eq: big diagram}
    \begin{tikzcd}
        \mc{M}_{qc}(\mc{D}_{\widetilde{X}},H)_{\mathrm{weak}} \arrow[rr, "\text{forget equiv.}"] \arrow[d, "\Gamma"'] & & \mc{M}_{qc}(\mc{D}_{\widetilde{X}})  \arrow[rr, "\pi_\bullet"] & & \mc{M}_{qc}(\pi_\bullet \mc{D}_{\widetilde{X}}) \arrow[d, "\text{restrict}"] \\
        \mc{M}(\widetilde{U}) & & & & \mc{M}_{qc}(\widetilde{\mc{D}}) \arrow[llll, "\Gamma"']
    \end{tikzcd}
\end{equation}
The composition of the top two horizontal arrows and the right-most vertical arrow is the equivalence in \eqref{eq: two notions of monodromicity}.

\subsection{Whittaker $\mc{D}$-modules}
\label{sec: Whittaker D-modules}
In this section, we describe the $\mc{D}$-modules which can be used to study the standard and costandard Whittaker modules of \S \ref{sec: standard/costandard modules and contravariant pairings}. 

Let $\mf{b}=\Lie B$ be the Lie algebra of $B$ and $\mf{n}=\Lie N$ the Lie algebra of $N$. Choose a character $\eta \in \ch \mf{n}$. In \cite{Romanov, TwistedSheaves}, the category $\mc{M}_{coh}(\mc{D}_\lambda, N, \eta)$ of {\em $\eta$-twisted Harish-Chandra sheaves} on $X$ is studied. An object in this category is a coherent $\mc{D}_\lambda$-module\footnote{Here $\lambda \in \mf{h}^*$ is fixed and $\mc{D}_\lambda$ is the TDO in Remark \ref{rem: relationship to TDOs}.}  $\mc{V}$ which is $N$-equivariant as an $\mc{O}_X$-module and satisfies the additional properties that the morphism $\mc{D}_\lambda \otimes \mc{V} \rightarrow \mc{V}$ is $N$-equivariant, and if $\mu:\mf{n} \rightarrow \End{\mc{V}}$ denotes the differential of the $N$-action and $\pi: \mc{D}_\lambda \rightarrow \End{\mc{V}}$ the $\mc{D}_\lambda$-action, then for any $\xi \in \mf{n}$, 
\[
\pi(\xi) = \mu(\xi) + \eta(\xi). 
\]
Twisted Harish-Chandra sheaves are holonomic $\mc{D}_\lambda$-modules, and hence have finite length \cite[Lem. 1.1]{TwistedSheaves}. 

In \cite{TwistedSheaves} simple objects in $\mc{M}_{coh}(\mc{D}_\lambda, N, \eta)$ are classified. They are parametrized by pairs $(Q, \tau)$, where $Q$ is an $N$-orbit in $X$ and $\tau$ is an $\eta$-twisted connection on $Q$ (i.e., an irreducible $(\mc{D}_Q, N, \eta)$-module). In particular, an $N$-orbit (Bruhat cell) $C(w)$ admits an $\eta$-twisted connection exactly when $w=w^C$ is the unique longest coset representative in a coset $C \in W_\eta \backslash W$, where $W_\eta \subset W$ is the subgroup generated by the simple roots with the property that $\eta$ does not vanish on the corresponding root subspace, as in \S \ref{sec: standard/costandard modules and contravariant pairings} \cite[Lem. 4.1, Thm. 4.2]{TwistedSheaves}. On such a Bruhat cell $C(w^C)$, the unique $\eta$-twisted connection is the exponential $\mc{D}_{C(w^C)}$-module $\mc{O}_{C(w^C), \eta}$. As an $N$-equivariant $\mc{O}_{C(w^C)}$-module, $\mc{O}_{C(w^C), \eta}$ is isomorphic to $\mc{O}_{C(w^C)}$, but the $\mc{D}_\lambda$-module structure is twisted by $\eta$. 

\begin{remark}
\label{rem: the exponential D-module}
    (The exponential $\mc{D}$-module) The $\eta$-twisted connection $\mc{O}_{C(w^C), \eta}$ is ``exponential'' in the following sense. The exponential function $f(x)=e^x$ on $\mathbb{A}^1$ is the solution of the ordinary differential equation $\partial f = f$, or $(\partial - 1)f =0$. The corresponding $D_{\mathbb{A}^1}$-module is the exponential $\mc{D}_{\mathbb{A}^1}$-module 
    \[
    \mathrm{exp}:= D_{\mathbb{A}^1}/D_{\mathbb{A}^1}(\partial - 1). 
    \]
    Here $D_{\mathbb{A}^1}$ denotes the global differential operators on $\mathbb{A}^1$; i.e., the Weyl algebra. Corresponding to the Lie algebra character $\eta\in \ch \mf{n}$ is an additive character $\eta:N \rightarrow \mathbb{A}^1$, which we call by the same name. We can use the character $\eta$ to construct exponential $\mc{D}$-modules on certain Bruhat cells. In particular, if for all $x \in C(w)$, $\eta|_{\stab_Nx}=1$, then $\eta$ factors through $N/\stab_Nx \simeq C(w)$: 
    \[
    \begin{tikzcd}
        N \arrow[r] \arrow[rr, bend left, "\eta"] & N/\stab_Nx \arrow[r, "\overline{\eta}"] & \mathbb{A}^1.
    \end{tikzcd}
    \]
    For such a Bruhat cell $C(w)$, we can pull back the exponential $D_{\mathbb{A}^1}$-module $\mathrm{exp}$ to an exponential $\mc{D}$-module $\overline{\eta}^! \mathrm{exp}$ on $C(w)$. Lemma 4.1 in \cite{TwistedSheaves} establishes that $\eta|_{\mathrm{stab}_Nx}=1$ precisely when $w=w^C$ is a longest coset representative for some $C \in W_\eta \backslash W$. The $\mc{D}$-modules constructed in this way are the $\eta$-twisted connections $\mc{O}_{C(w^C),\eta}$ described above. 
\end{remark}

The category $\mc{M}_{coh}(\mc{D}_\lambda, N, \eta)$ is a highest weight category \cite[Thm. 7.2]{BRpairings}. By the discussion above, it has finitely many simple objects, parameterized by $W_\eta \backslash W$. Standard and costandard objects are given by $!$- and $+$-pushforwards (respectively) of the exponential $\mc{D}_\lambda$-modules $\mc{O}_{C(w^C), \eta}$ for $C \in W_\eta \backslash W$. When $\lambda \in \mf{h}^*$ is regular and antidominant\footnote{See Section \S\ref{sec: definitions} for our conventions with these definitions.}, the global sections of standard and costandard ($\mc{D}_\lambda, N, \eta)$-modules are the standard and costandard Whittaker modules defined in \S \ref{sec: standard/costandard modules and contravariant pairings}. Specifically, we have the following relation. 
\begin{theorem}
    \label{thm: matching of global sections undeformed}\cite[Thm. 9]{Romanov}, \cite[Lem. 7.3]{BRpairings} Let $\lambda \in \mf{h}^*$ be antidominant, $\eta \in \ch \mf{n}$, and $w^C$ the longest coset representative in $C \in W_\eta \backslash W$. The standard and costandard $(\mc{D}_\lambda, N, \eta)$-modules $i_{C(w^C)!}\mc{O}_{C(w^C), \eta}$ and $i_{C(w^C)+}\mc{O}_{C(w^C), \eta}$ have global sections 
\begin{equation}
    \label{eq: global sections of standards}
    \Gamma(X, i_{C(w^C)!}\mc{O}_{C(w^C), \eta}) = M(w^C\lambda, \eta)
\end{equation}
\begin{equation}
    \label{eq: global sections of standards}
    \Gamma(X, i_{C(w^C)+}\mc{O}_{C(w^C), \eta}) = M^\vee(w^C\lambda, \eta).
\end{equation}
\end{theorem}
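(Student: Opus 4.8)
The plan is to combine Beilinson--Bernstein exactness with the classical ($\eta=0$) identification of Verma and dual Verma modules and Kostant's description of nondegenerate Whittaker modules, using the parabolic projection $\pi_\eta \colon X = G/B \to G/P_\eta$ to interpolate between them. Throughout set $\theta = W\lambda$; since $\lambda$ is antidominant, $\Gamma(X,-)$ is exact on $\mc{D}_\lambda$-modules and, by Proposition~\ref{prop: twisted HC modules}, carries $\mc{M}_{coh}(\mc{D}_\lambda, N, \eta)$ into $\mc{N}(\theta, \eta)$. Write $C = W_\eta w$ with $w$ the minimal-length representative, so $w^C = w_{0,\eta} w$ and $\ell(w^C) = \ell(w_{0,\eta}) + \ell(w)$. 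The key geometric observation is that $\pi_\eta$ restricts to a bundle from $C(w^C)$ onto the Schubert cell $N w P_\eta / P_\eta \subset G/P_\eta$ (of dimension $\ell(w)$) with fibre the \emph{big} cell of the flag variety of $\levi_\eta$ (of dimension $\ell(w_{0,\eta}) = \dim \mf{n}_\eta$), and that on each fibre $\mc{O}_{C(w^C), \eta}$ restricts to the exponential $\mc{D}$-module of the \emph{nondegenerate} character $\eta|_{\mf{n}_\eta}$; this is a reformulation of the characterisation of $w^C$ recalled in \S\ref{sec: Whittaker D-modules} (Lemma~4.1 of \cite{TwistedSheaves}).

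For the standard module I would then compute $\Gamma(X, i_{C(w^C)!}\mc{O}_{C(w^C), \eta})$ via $\Gamma(X,-) = \Gamma(G/P_\eta, \pi_{\eta +}(-))$, using that $\pi_\eta$ is proper and smooth. Because $i_{C(w^C)!}\mc{O}_{C(w^C), \eta}$ is the $!$-extension from $C(w^C)$, its pushforward $\pi_{\eta +}$ is the $!$-extension over $G/P_\eta$ of a $\mc{D}$-module on $N w P_\eta / P_\eta$ which, fibrewise, is the global sections of the nondegenerate exponential $\mc{D}$-module on the big cell of the $\levi_\eta$-flag variety, namely $Y(w^C\lambda - \rho + \rho_\eta, \eta)$ by Beilinson--Bernstein for $\levi_\eta$ together with Kostant's theorem \cite{Kostant} (cf.\ the deformed statement Theorem~\ref{thm: standards to standards nondegenerate case}). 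Taking global sections over $G/P_\eta$ of this $!$-extension produces a parabolically induced module, and tracking the normal-bundle contribution of the $!$-extension identifies it with $U(\mf{g})\otimes_{U(\mf{p}_\eta)} Y(w^C\lambda - \rho + \rho_\eta, \eta) = M(w^C\lambda, \eta)$ of Definition~\ref{def: standard Whittaker module}. Specialising to $\eta = 0$ recovers the classical computation $\Gamma(X, i_{C(w)!}\mc{O}_{C(w)}(\lambda)) = M(w\lambda)$, which serves as a check on the $\rho$-shifts. A cleaner route to the isomorphism, avoiding delicate base change, is to first build a comparison morphism $M(w^C\lambda, \eta) \to \Gamma(X, i_{C(w^C)!}\mc{O}_{C(w^C), \eta})$ from the universal property of the standard Whittaker module (the canonical section of $\mc{O}_{C(w^C)}$ on the open dense cell is an $\eta$-Whittaker vector of the correct $Z(\levi_\eta)$-infinitesimal character and $\mf{h}^\eta$-weight), and then check it is an isomorphism by comparing $\mf{h}^\eta$-graded dimensions, using Proposition~\ref{prop: properties of standard Whittakers}(iv) on the algebraic side and the cell bundle structure above on the geometric side.

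The costandard case is obtained by running the same argument with $i_{C(w^C)!}$ replaced by $i_{C(w^C)+}$: fibrewise one again gets $Y(w^C\lambda - \rho + \rho_\eta, \eta)$ (standard and costandard coincide for the nondegenerate character $\eta|_{\mf{n}_\eta}$), while the $+$-extension over $G/P_\eta$ turns parabolic induction into the passage to $\eta$-finite vectors in the full dual, yielding $\Gamma(X, i_{C(w^C)+}\mc{O}_{C(w^C), \eta}) = (M(w^C\lambda)^*)_\eta = M^\vee(w^C\lambda, \eta)$ of Definition~\ref{def: costandard modules}. Alternatively one deduces this from the standard case by applying $\mathbb{D}_X$, which exchanges $i_{C(w^C)!}$ and $i_{C(w^C)+}$ and sends $\mc{O}_{C(w^C), \eta}$ to a shift of $\mc{O}_{C(w^C), -\eta}$, and using that $\Gamma(X,-)$ intertwines $\mathbb{D}_X$ with the duality $M \mapsto (M^*)_\eta$ on $\mc{N}(\theta, \eta)$ together with Proposition~\ref{prop: properties of costandards}.

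The main obstacle is the bookkeeping: confirming that the parameter is exactly $w^C\lambda$ for the non-dot $W$-action and that all the $\rho$-shifts line up --- the $\rho$ in the definition of $\mc{D}_\lambda$ (Remark~\ref{rem: relationship to TDOs}), the Harish-Chandra normalisations for $\mf{g}$ and for $\levi_\eta$, and the $\rho_\eta$-shift in Definition~\ref{def: standard Whittaker module} --- and justifying the base-change statement that $\Gamma(X,-) \circ \pi_{\eta +}$ computes $U(\mf{g})\otimes_{U(\mf{p}_\eta)}(-)$ of the Levi-level global sections compatibly with the $N$-equivariant $\eta$-twist. If one instead takes the universal-property-plus-dimension-count route, the obstacle shifts to showing the comparison map is injective, i.e.\ that the cell bundle structure forces the $\mf{h}^\eta$-graded dimensions of $\Gamma(X, i_{C(w^C)!}\mc{O}_{C(w^C), \eta})$ to equal $\dim_\C U(\bar{\mf{n}}^\eta)_\nu \cdot \dim_\C Y(w^C\lambda - \rho + \rho_\eta, \eta)$ in each degree $\nu$.
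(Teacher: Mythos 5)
First, a point of reference: the paper does not actually prove Theorem \ref{thm: matching of global sections undeformed} --- it is imported wholesale from \cite[Thm.~9]{Romanov} and \cite[Lem.~7.3]{BRpairings} --- so the comparison is with those cited arguments. Your overall skeleton (exactness of $\Gamma$ for antidominant $\lambda$, reduction to Kostant's nondegenerate theory on $\levi_\eta$ plus the $\eta=0$ case, and identification via the universal property of $M(\lambda,\eta)$ together with a count of $\mf h^\eta$-weight spaces) is the right circle of ideas and close in spirit to the cited proofs.

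However, the geometric engine of your argument is false as stated. You claim that $\pi_\eta\colon G/B\to G/P_\eta$ restricts on $C(w^C)$ to a bundle over the $\ell(w)$-dimensional cell $NwP_\eta/P_\eta$ with fibre the big cell of the flag variety of $\levi_\eta$. The fibres of $\pi_\eta$ group Bruhat cells according to \emph{right} cosets $vW_\eta$, whereas the cells carrying $\eta$-twisted connections are the longest representatives of \emph{left} cosets $W_\eta v$, and these do not match. Concretely, take $\g=\mathfrak{sl}_3$, $\Pi_\eta=\{\alpha\}$, and $w^C=s_\alpha s_\beta$ (the longest element of $W_\eta s_\beta$, with minimal representative $w=s_\beta$). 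Then $\pi_\eta^{-1}\bigl(\pi_\eta(C(s_\alpha s_\beta))\bigr)=C(s_\alpha s_\beta)\cup C(w_0)$, and inside this $\bP^1$-bundle the two-dimensional orbit $C(s_\alpha s_\beta)$ is a \emph{section}: $\pi_\eta$ maps it isomorphically onto the open $N$-orbit of $G/P_\alpha\cong\bP^2$ (which is $Ns_\alpha s_\beta P_\alpha/P_\alpha$, not $Ns_\beta P_\alpha/P_\alpha$), so the fibres are points, not big cells, and there is no nondegenerate exponential module on the fibres to which Kostant's theorem could be applied. Your two sanity checks ($\eta=0$ and $\eta$ nondegenerate) do not detect this because the left/right distinction collapses in both extremes; the failure occurs exactly in the intermediate, partially degenerate cases the theorem is about. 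This error infects both of your routes, since the ``cleaner'' universal-property argument still relies on the cell-bundle structure for the $\mf h^\eta$-graded dimension count. (Note that the product decomposition actually used in \S\ref{sec: proof of the Jantzen conjecture} of this paper places the exponential twist on the factor $N_w$ \emph{transverse} to the $P_s/B$-direction, essentially the opposite of your picture.) Separately, your duality shortcut for the costandard case is unjustified: $M^\vee(w^C\lambda,\eta)$ is by definition the $\eta$-finite dual of the \emph{Verma} module $M(w^C\lambda)$, and it is neither established nor obvious that $V\mapsto(V^*)_\eta$ is a duality on $\mc N(\theta,\eta)$ sending $M(w^C\lambda,\eta)$ to $(M(w^C\lambda)^*)_\eta$, which is what your intertwining claim for $\mathbb{D}_X$ would require.
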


Using the set-up in \S \ref{sec: preliminaries}, we can also find standard and costandard Whittaker modules within the global sections of $\mc{D}_{\widetilde{X}}$-modules. This perspective will be necessary in the construction of the Jantzen filtration. Let $Q = C(w^C)$ be a Bruhat cell corresponding to a longest coset representative $w^C \in C \in W_\eta \backslash W$, and denote by $\widetilde{Q} = \pi^{-1} (Q)$ its preimage in $\widetilde{X}$. 
Because the map $\pi:\widetilde{X} \rightarrow X$ is a $G$-equivariant principal $H$-bundle, it preserves categories of $\eta$-twisted Harish-Chandra sheaves under $\mc{D}$-module functors. Hence the $\mc{D}_{\widetilde{Q}}$-module $\pi^+ \mc{O}_{Q, \eta} = \mc{O}_{\widetilde{Q}, \eta}$ obtained by pulling back the exponential $\mc{D}_Q$-module $\mc{O}_{Q, \eta}$ to $\widetilde{Q}$ is a $(\mc{D}_{\widetilde{Q}}, N, \eta)$-module.

Let $i_{\widetilde{Q}}: \widetilde{Q} \hookrightarrow \widetilde{X}$ be inclusion, and consider the $\mc{D}_{\widetilde{X}}$-modules 
\begin{equation}
    \label{eq: standards and costandards on base affine space}
    i_{\widetilde{Q}!}\mc{O}_{\widetilde{Q},\eta} \text{ and } i_{\widetilde{Q} +} \mc{O}_{\widetilde{Q}, \eta}. 
\end{equation}
These are $H$-monodromic $\mc{D}$-modules on $X$, so we can consider the corresponding $\widetilde{\mc{D}}$-modules under the correspondence \eqref{eq: two notions of monodromicity}. Specifically, these are the $\pi_\bullet \mc{D}_{\widetilde{X}}$-modules $\pi_\bullet i_{\widetilde{Q} !} \mc{O}_{\widetilde{Q}, \eta}$ and $\pi_\bullet i_{\widetilde{Q} +} \mc{O}_{\widetilde{Q}, \eta}$, with the restricted action of $\widetilde{\mc{D}} \subset \pi_\bullet \mc{D}_{\widetilde{X}}$. For $\lambda \in \mf{h}^*$, consider the subsheaves 
\begin{equation}
    \label{eq: std and costd on base affine with lambda}
\left(\pi_\bullet i_{\widetilde{Q}!}\mc{O}_{\widetilde{Q},\eta}\right)_\lambda \text{ and } \left(\pi_\bullet i_{\widetilde{Q} +} \mc{O}_{\widetilde{Q}, \eta}\right)_\lambda 
\end{equation}
on which $\mf{m}_\lambda$ acts trivially. By Remark \ref{rem: relationship to TDOs}, these can be considered as $\mc{D}_\lambda$-modules on $X$. For integral $\lambda$, \eqref{eq: std and costd on base affine with lambda} are isomorphic to the standard and costandard (respectively) $\eta$-twisted Harish-Chandra sheaves $i_{Q!} \mc{O}_{Q, \eta}$ and $i_{Q+} \mc{O}_{Q, \eta}$ \cite[Lem. 2.5.4]{BBJantzen}. 

This allows us to study standard and costandard $\eta$-twisted Harish-Chandra sheaves for all integral $\lambda$ simultaneously by working with the $\mc{D}_{\widetilde{X}}$-modules \eqref{eq: standards and costandards on base affine space}. By taking global sections on which the ideal $\widetilde{U}\mf{m}_\lambda \subset \widetilde{U}$ acts trivially, we recover the standard and costandard Whittaker modules of \S \ref{sec: standard/costandard modules and contravariant pairings}. In particular, if for a $\widetilde{U}$-module $M$, we denote by $\left( M \right)_\lambda$ the sub-$U(\g)$-module on which $\widetilde{U} \mf{m}_\lambda$ acts trivially, by diagram \eqref{eq: big diagram} and Theorem \ref{thm: matching of global sections undeformed}, we have the following relationship. 
\begin{lemma}
    \label{lem: realising standard Whittakers on base affine space}
    Let $\lambda \in \h^*$ be antidominant, $\eta \in \ch \mf{n}$, and $w^C$ the longest coset representative in $C \in W_\eta \backslash W$. Denote by $\widetilde{C}(w^C) = \pi^{-1} (C(w^C))$ the preimage of the Bruhat cell $C(w^C)$ under $\pi:\widetilde{X} \rightarrow X$. We have the following isomorphism of $U(\mf{g})$-modules. 
    \[
    \left( \Gamma (\widetilde{X}, i_{\widetilde{C}(w^C)!} \mc{O}_{\widetilde{C}(w^C), \eta} )\right)_\lambda = M(w^C \lambda, \eta) 
    \]
\end{lemma}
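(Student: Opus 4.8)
The plan is to reduce the claim to its flag-variety counterpart, Theorem~\ref{thm: matching of global sections undeformed}, by pushing everything down along the affine $H$-bundle $\pi\colon\widetilde X\to X$ and then cutting out the piece on which $\mf m_\lambda$ acts trivially. Write $Q=C(w^C)$ and $\widetilde Q=\widetilde C(w^C)=\pi^{-1}(Q)$, so that the square
\[
\begin{tikzcd}
\widetilde Q \arrow[r, "i_{\widetilde Q}"] \arrow[d, "\pi_Q"'] & \widetilde X \arrow[d, "\pi"] \\
Q \arrow[r, "i_Q"'] & X
\end{tikzcd}
\]
is Cartesian and $\mc O_{\widetilde Q,\eta}=\pi_Q^+\mc O_{Q,\eta}$. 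Since $\pi$ is $H$-equivariant and $i_{\widetilde Q!}$ preserves $\eta$-twisted equivariant objects, $i_{\widetilde Q!}\mc O_{\widetilde Q,\eta}$ is $H$-monodromic, hence defines an object of $\mc{M}_{qc}(\mc{D}_{\widetilde X},H)_{\mathrm{weak}}$ to which diagram~\eqref{eq: big diagram} applies. Because $\pi$ is an affine morphism, $\pi_\bullet$ is exact and $\Gamma(\widetilde X,\,i_{\widetilde Q!}\mc O_{\widetilde Q,\eta})=\Gamma\bigl(X,\,\pi_\bullet i_{\widetilde Q!}\mc O_{\widetilde Q,\eta}\bigr)$ as $\widetilde U$-modules, the $\widetilde U$-action on the right being through $\alpha\colon\widetilde U\xrightarrow{\sim}\Gamma(X,\widetilde{\mc D})$ of~\eqref{eq: alpha isomorphism version}.

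Next I would commute the global sections functor with the passage to the $\lambda$-part. By Remark~\ref{rem: relationship to TDOs} the image of $S(\mf h)\hookrightarrow\widetilde{\mc D}$ is central, so for any $\widetilde{\mc D}$-module $\mc M$ the subsheaf $\mc M_\lambda=\bigcap_{m\in\mf m_\lambda}\ker(m\colon\mc M\to\mc M)$ is a $\widetilde{\mc D}$-submodule, in fact a $\mc D_\lambda$-module. Since $\Gamma(X,-)$ is left exact and commutes with intersections (being a limit), $\Gamma(X,\mc M_\lambda)=\bigcap_{m\in\mf m_\lambda}\ker\bigl(m\colon\Gamma(X,\mc M)\to\Gamma(X,\mc M)\bigr)=\bigl(\Gamma(X,\mc M)\bigr)_\lambda$; and since $\mf m_\lambda$ is central, a vector is killed by the left ideal $\widetilde U\mf m_\lambda$ exactly when it is killed by $\mf m_\lambda$, so this is precisely the submodule $(-)_\lambda$ of the statement. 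Taking $\mc M=\pi_\bullet i_{\widetilde Q!}\mc O_{\widetilde Q,\eta}$ yields
\[
\bigl(\Gamma(\widetilde X,\,i_{\widetilde Q!}\mc O_{\widetilde Q,\eta})\bigr)_\lambda
=\Gamma\bigl(X,\,(\pi_\bullet i_{\widetilde Q!}\mc O_{\widetilde Q,\eta})_\lambda\bigr)
\]
as $U(\mf g)$-modules, where the $U(\mf g)$-structure on the right comes, via~\eqref{eq: big diagram}, from $U(\mf g)\subset\widetilde U$.

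It then remains to identify the sheaf $(\pi_\bullet i_{\widetilde Q!}\mc O_{\widetilde Q,\eta})_\lambda$ and take global sections. By the comparison~\eqref{eq: std and costd on base affine with lambda} recalled before the statement — via \cite[Lem.~2.5.4]{BBJantzen}, or directly by smooth base change along $\pi$ in the Cartesian square above — this $\mc D_\lambda$-module is the standard $\eta$-twisted Harish-Chandra sheaf $i_{Q!}\mc O_{Q,\eta}$ on $X$. Finally, since $\lambda$ is antidominant, Theorem~\ref{thm: matching of global sections undeformed} gives $\Gamma(X,\,i_{Q!}\mc O_{Q,\eta})=M(w^C\lambda,\eta)$, and concatenating the isomorphisms produces $\bigl(\Gamma(\widetilde X,\,i_{\widetilde C(w^C)!}\mc O_{\widetilde C(w^C),\eta})\bigr)_\lambda=M(w^C\lambda,\eta)$ as $U(\mf g)$-modules.

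The main obstacle, and essentially the only step with content, is the sheaf-level comparison $(\pi_\bullet i_{\widetilde Q!}\mc O_{\widetilde Q,\eta})_\lambda\cong i_{Q!}\mc O_{Q,\eta}$: one must check that imposing $\mf m_\lambda=0$ on the $\widetilde{\mc D}$-module on base affine space is compatible, through $\pi_\bullet$ and the Cartesian square, with forming the exponential $\mc D_\lambda$-module and its $!$-extension on the flag variety, and — crucially — that the $\rho$-shift conventions fixed in Remark~\ref{rem: relationship to TDOs} line up so that ``$\mf m_\lambda$ acts trivially'' genuinely selects the parameter $w^C\lambda$. Everything else (affineness of $\pi$, left exactness of $\Gamma$, centrality of $S(\mf h)$, and the matching of $U(\mf g)$-module structures) is formal bookkeeping through diagram~\eqref{eq: big diagram}.
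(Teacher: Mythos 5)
Your proposal is correct and follows essentially the same route as the paper, which presents this lemma as a direct consequence of diagram~\eqref{eq: big diagram}, the identification~\eqref{eq: std and costd on base affine with lambda} of $(\pi_\bullet i_{\widetilde Q!}\mc O_{\widetilde Q,\eta})_\lambda$ with $i_{Q!}\mc O_{Q,\eta}$ via \cite[Lem.~2.5.4]{BBJantzen}, and Theorem~\ref{thm: matching of global sections undeformed}; your added justifications (affineness of $\pi$, left exactness of $\Gamma$ commuting with the $\lambda$-part, centrality of $S(\mf h)$) are exactly the formal bookkeeping the paper leaves implicit. The only caveat worth noting is that the comparison~\eqref{eq: std and costd on base affine with lambda} is stated for integral $\lambda$, so the integrality hypothesis should be carried along with antidominance, as the paper itself does in its later applications.
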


\subsection{A geometric Jantzen filtration via monodromy} \label{sec: a geometric Jantzen filtration via monodromy}
In \cite{BBJantzen}, Beilinson--Bernstein construct ``geometric Jantzen filtrations'' of certain $\mc{D}$-modules using monodromy filtrations. In this section, we describe their construction in detail, and explain how the Whittaker $\mc{D}$-modules introduced in \S \ref{sec: Whittaker D-modules} can be made to fit into this framework.

\subsubsection{Monodromy filtrations in abelian categories} \label{sec: monodromy filtrations in abelian categories} Let $\mc{A}$ be an abelian category, $A$ an object in $\mc{A}$, and $s \in \End_\mc{A}(A)$ a nilpotent endomorphism. The Jacobson-Morosov theorem \cite[Prop. 1.6.1]{Deligne} guarantees the existence of a unique increasing exhaustive filtration $\mu_\bullet$ of $A$ satisfying two conditions: 
\begin{enumerate} [label=(\roman*)]
    \item $s \mu_i \subset \mu_{i-2}$ for all $i$, and
    \item the endomorphism $s^k$ induces an isomorphism $\gr^\mu_kA \simeq \gr^\mu_{-k} A$ for all $k$. 
\end{enumerate}
This is the {\bf monodromy filtration} of $A$. The monodromy filtration naturally induces a filtration $J_{!,\bullet}$ on the sub-object $\ker s$ and a filtration $J_{+,\bullet}$ on the quotient $\coker s$. The filtrations $J_{!,\bullet}$ and $J_{+,\bullet}$ admit a description in terms of powers of $s$. Explicitly, if we denote by
\begin{equation}
    \mc{K}^p:=\begin{cases} \ker s^{p+1} & \text{ for } p \geq 0; \\ 0 & \text{ for } p<0, \end{cases} \text{ and } \mc{I}^p:= \begin{cases} \im s^p & \text{ for } p>0; \\ A & \text{ for } p \leq 0 \end{cases} 
\end{equation}
the kernel and image filtrations of $A$, then 
\begin{equation}
    \label{eq: monodromy filtrations as powers of s}
    J_{!,i} = \ker s \cap \mc{I}^{-i} \text{ and } J_{+,i} = (\mc{K}^i + \im s) / \im s. 
\end{equation}

\subsubsection{The maximal extension functor} 
\label{sec: the maximal extension functor} To utilize the monodromy filtration in the setting of holonomic $\mc{D}$-modules, we need a procedure for producing $\mc{D}$-modules with nilpotent endomorphisms. The maximal extension functor accomplishes this. In this section, we recall the maximal extension for $\mc{D}$-modules, which is a special case of the construction in \cite{Bei87}.

Let $f:Y \rightarrow \A^1$ be a regular function on a smooth algebraic variety $Y$. This gives rise to an open-closed decomposition of $Y$:
\begin{equation}
    \label{eq: open-closed}
    U:= f^{-1}(\mathbb{A}^1 - \{0\}) \xhookrightarrow{j} Y \xhookleftarrow{i} f^{-1}(0). 
\end{equation}
For $n \in \mathbb{N}$, denote by 
\[
I^{(n)}:= (\mc{O}_{\mathbb{A}^1 - \{0\} }\otimes \C[s]/s^n) t^s
\]
the free rank one $\mc{O}_{\mathbb{A}^1 - \{0\}} \otimes \C[s]/s^n$-module generated by the symbol $t^s$. The action 
\begin{equation}
    \label{eq: D-module structure on I^n}
    \partial_t \cdot t^s = st^{-1} t^s
\end{equation}
gives $I^{(n)}$ the structure of a $\mc{D}_{\mathbb{A}^1 - \{0\}}$-module. Any $\mc{D}_U$-module $\mc{M}_U$ (where $U = f^{-1}(\A^1 - \{0\})$ as in \eqref{eq: open-closed}) can be deformed using $I^{(n)}$ in the following way. For $n \in \N$, set 
\begin{equation}
    \label{eq: deformed D-module}
f^s \mc{M}_U^{(n)}:= f^+ I^{(n)} \otimes_{\mc{O}_U} \mc{M}_U. 
\end{equation}
We consider this a ``deformation'' because $f^{s}\mc{M}_U^{(1)} = \mc{M}_U$. Note that multiplication by $s\in \C[s]/s^n$ acts nilpotently on $f^s \mc{M}_U^{(n)}$ by construction.

Assume that $\mc{M}_U$ is holonomic. There is a canonical map between the $!$ and $+$ pushforward of $f^s\mc{M}_U^{(n)}$ to $Y$:
\begin{equation}
    \label{eq: canonical map}
    \can: j_!f^s\mc{M}_U^{(n)} \rightarrow j_+ f^s\mc{M}_U^{(n)}.
\end{equation}
We denote the composition of the canonical map with multiplication by $s$ by  
\begin{equation}
\label{eq: s^1(n)}
    s^1(n) := s \circ \can : j_!f^s\mc{M}_U^{(n)} \rightarrow j_+ f^s\mc{M}_U^{(n)}.
\end{equation}
For large enough $n$, the cokernel of $s^1(n)$ stabilizes \cite[Lem. 2.1]{Bei87}. Define the {\bf maximal extension of $\mc{M}_U$} to be the $\mc{D}_Y$-module 
\begin{equation}
    \label{eq: maximal extension definition}
    \Xi_f \mc{M}_U:= \coker s^1(n)
\end{equation}
for $n$ large enough that $\coker s^1(n)$ has stabilized. By construction, $\Xi_f\mc{M}_U$ comes equipped with a nilpotent endomorphism, and $(\Xi_f\mc{M}_U) |_U = \mc{M}_U$. 

This construction defines an exact functor \cite[Lem. 4.2.1(i)]{BBJantzen}
\begin{equation}
    \label{eq: maximal extension functor}
\Xi_f: \mc{M}_{hol}(\mc{D}_U) \rightarrow \mc{M}_{hol}(\mc{D}_Y).
\end{equation}
Moreover, there are canonical short exact sequences \cite[Lem. 4.2.1(ii)']{BBJantzen}
\begin{align}
    \label{eq: canonical ses 1}
    &0 \rightarrow j_!\mc{M}_U \rightarrow \Xi_f\mc{M}_U \rightarrow \coker(\mathrm{can}) \rightarrow 0 \\ 
    \label{eq: canonical ses 2}
    &0 \rightarrow \coker(\mathrm{can}) \rightarrow \Xi_f \mc{M}_U \rightarrow j_+ \mc{M}_U \rightarrow 0
\end{align}
with $j_! = \ker (s: \Xi_f  \rightarrow \Xi_f)$ and $j_+ = \coker(s: \Xi_f \rightarrow \Xi_f)$.

\subsubsection{Geometric Jantzen filtrations}
\label{sec: geometric Jantzen filtrations}
We continue working in the setting of \S\ref{sec: the maximal extension functor}. For any regular function $f: Y \rightarrow \A^1$ and holonomic $\mc{D}_U$-module $\mc{M}_U$, the $\mc{D}_Y$-module $\Xi_f \mc{M}_U$ has a monodromy filtration given by the nilpotent endomorphism $s$. Moreover, the sub- and quotient $\mc{D}_Y$-modules 
\begin{equation}
    j_!\mc{M}_U = \ker (s: \Xi_f \mc{M}_U \rightarrow \Xi_f\mc{M}_U) \text{ and } j_+\mc{M}_U = \coker (s: \Xi_f \mc{M}_U \rightarrow \Xi_f\mc{M}_U)
\end{equation}
obtain the filtrations $J_{!,\bullet}$ and $J_{+,\bullet}$ of \eqref{eq: monodromy filtrations as powers of s}. We call the filtrations $J_{!,\bullet}$ and $J_{+,\bullet}$ the {\bf geometric Jantzen filtrations} of $j_!\mc{M}_U$ and $j_+ \mc{M}_U$, respectively. 

\subsubsection{Geometric Jantzen filtrations on Whittaker $\mc{D}$-modules}
\label{sec: geometric Janten filtrations on Whittaker D-modules}
Now we specialize to the setting of base affine space $\widetilde{X}$. Let $Q \subset X$ be an $N$-orbit admitting an $\eta$-twisted connection; i.e., a Bruhat cell $C(w^C)$ corresponding to a longest coset representative $w^C \in C \in W_\eta \backslash W$, as in \S \ref{sec: Whittaker D-modules}. Let $\widetilde{Q} = \pi^{-1}(Q)$ be the corresponding union of $N$-orbits in $\widetilde{X}$. Fix a positive\footnote{By a {\em positive} weight, we mean a weight $\lambda \in \h^*$ such that $\alpha^\vee(\lambda)>0$ for all positive roots $\lambda \in \Sigma^+$.} regular integral weight $\varphi \in \mf{h}^*$.  It is explained in \cite[\S3.4, Lem. 3.5.1]{BBJantzen} that corresponding to $\varphi$ is a unique (up to multiplication by a constant) non-zero $N$-invariant function $f_\varphi: \widetilde{Q} \rightarrow \A^1$ satisfying 
\begin{equation}
    \label{eq: condition of regular function}
    f_\varphi(h\widetilde{x}) = (\exp \varphi)(h) f_\varphi(\widetilde{x})
\end{equation}
for all $h \in H$, $\widetilde{x} \in \widetilde{X}$. Moreover, $f_\varphi$ is a regular function on the closure $\overline{\widetilde{Q}}$ of $\widetilde{Q}$ with the property that $f_\varphi^{-1}(0) = \overline{\widetilde{Q}}\backslash \widetilde{Q}$, and $f_\varphi$ extends to a regular function on $\widetilde{X}$ which we call by the same name\footnote{See proof of Lemma 3.5.1 in \cite{BBJantzen} for an explicit construction of this function. Note that this construction works for any $N$-orbit in $X$, not just those admitting $\eta$-twisted connections.}.

Let $U:= f_\varphi^{-1}(\mathbb{A}^1 - \{0\}) \subset \widetilde{X}$. Because $f_\varphi^{-1}(0) = \overline{\widetilde{Q}}\backslash \widetilde{Q}$, $\widetilde{Q}$ is closed in $U$. The regular function $f_\varphi:\widetilde{X} \rightarrow \A^1$ determines a maximal extension functor 
\begin{equation}
    \label{eq: maximal extension big}
    \Xi_\varphi:= \Xi_{f_\varphi}: \mc{M}_{hol}(\mc{D}_U) \rightarrow \mc{M}_{hol}(\mc{D}_{\widetilde{X}})
\end{equation}
by the process laid out in \S\ref{sec: the maximal extension functor}. Denote by $\mc{M}_{hol}^{\overline{\widetilde{Q}}}(\mc{D}_{\widetilde{X}}) \subset \mc{M}_{hol}(\mc{D}_{\widetilde{X}})$ (resp. $\mc{M}_{hol}^{\widetilde{Q}}(\mc{D}_U) \subset \mc{M}_{hol}(\mc{D}_U)$) the category of holonomic $\mc{D}_{\widetilde{X}}$-modules  (resp. $\mc{D}_U$-modules) supported on $\overline{\widetilde{Q}}$ (resp. $\widetilde{Q}$). By Kashiwara's theorem \cite[Thm. 12.6]{D-modulesnotes}, there are equivalences of categories
\begin{equation}
    \label{eq: supported on orbits}
    \mc{M}_{hol}^{\overline{\widetilde{Q}}}(\mc{D}_{\widetilde{X}}) \simeq \mc{M}_{hol}(\mc{D}_{\overline{\widetilde{Q}}}) \text{ and } \mc{M}_{hol}^{\widetilde{Q}}(\mc{D}_U) \simeq \mc{M}_{hol}(\mc{D}_{\widetilde{Q}}). 
\end{equation}
The functor $\Xi_\varphi$ \eqref{eq: maximal extension big} transforms the subcategory $\mc{M}_{hol}^{\widetilde{Q}}(\mc{D}_U)$ into $\mc{M}_{hol}^{\overline{\widetilde{Q}}}(\mc{D}_{\widetilde{X}})$ \cite[Rmk. 4.2.2 (ii)]{BBJantzen}, so using \eqref{eq: supported on orbits}, we have 
\begin{equation}
    \label{eq: maximal extension small}
    \mc{M}_{hol}(\mc{D}_{\widetilde{Q}}) \simeq 
    \mc{M}_{hol}^{\widetilde{Q}}(\mc{D}_U) \xrightarrow{\Xi_\varphi} \mc{M}_{hol}^{\overline{\widetilde{Q}}}(\mc{D}_{\widetilde{X}}). 
\end{equation}
In this way, the $\mc{D}_{\widetilde{X}}$-modules $i_{\widetilde{Q} !}\mc{O}_{\widetilde{Q}, \eta}$ and $i_{\widetilde{Q}+} \mc{O}_{\widetilde{Q}, \eta}$ of \eqref{eq: standards and costandards on base affine space} obtain geometric Jantzen filtrations from the monodromy filtration on $\Xi_\varphi\mc{O}_{\widetilde{Q}, \eta}$. A priori, these filtrations depend on choice of  $\varphi \in \mf{h}^*$. 

\subsection{Relationship between algebraic and geometric Jantzen filtration}
\label{sec: relationship between algebraic and geometric Janten filtrations}
The geometric Jantzen filtration on $i_{\widetilde{Q}!}\mc{O}_{\widetilde{Q}, \eta}$ \eqref{eq: standards and costandards on base affine space} described in \S \ref{sec: geometric Janten filtrations on Whittaker D-modules} aligns with the algebraic Jantzen filtration on the corresponding standard Whittaker modules (Definition \ref{def: standard Whittaker module}) under the global sections functor. This relationship is not immediately obvious from the definitions, so we dedicate this section to explaining the connection. 

We return to the setting of \S\ref{sec: the maximal extension functor}. Let $f: Y \rightarrow \A^1$ be a regular function on a smooth algebraic variety and $U=f^{-1}(\mathbb{A}^1-\{0\})$. As in \eqref{eq: s^1(n)}, for any holonomic $\mc{D}_U$-module $\mc{M}_U$ and $n \in \mathbb{N}$, we have a map 
\begin{equation}
    \label{eq: s1n round two}
    s^1(n):= s \circ \mathrm{can} : j_! f^s \mc{M}_U^{(n)} \rightarrow j_+ f^s \mc{M}_U^{(n)}.
\end{equation}
Assume $n$ is large enough so that $\coker s^1(n)$ has stabilised. There are two natural copies of the $\mc{D}_Y$-module $j_! \mc{M}_U$ appearing in \eqref{eq: s1n round two}: 
\begin{enumerate}
    \item as a quotient of the domain, 
    \[
     j_! f^s \mc{M}_U^{(n)} / s j_! f^s \mc{M}_U^{(n)}\simeq  j_! \mc{M}_U,
    \]
    \item as a submodule of the cokernel, 
    \[
   \ker (s: \Xi_f\mc{M}_U \rightarrow \Xi_f \mc{M}_U) \simeq  j_!\mc{M}_U. 
    \]
\end{enumerate}
Each of these copies of $j_! \mc{M}_U$ obtains a natural filtration from \eqref{eq: s1n round two} in the following way. 

\vspace{2mm}
\noindent
{\bf Filtration 1:} There is a filtration $S_\bullet$ on $j_+f^s \mc{M}_U^{(n)}$ given by powers of $s$:
\begin{equation}
\label{eq: powers of s filtration}
S_i:= s^{-i} j_+ f^s \mc{M}_U^{(n)}.
\end{equation}
where we set $s^{i}=\id$ for $i\leq 0$.
Pulling back the filtration $S_\bullet$ along the canonical map $\mathrm{can}$ \eqref{eq: canonical map}, we obtain a filtration $P_\bullet$ of $j_!f^s \mc{M}_U^{(n)}$:
\[
P_i:= \mathrm{can}^{-1}(s^{-i} j_+ f^s \mc{M}_U^{(n)}).
\]
The filtration $P_\bullet$ on $j_!f^s \mc{M}_U^{(n)}$ induces a filtration on the quotient $j_!f^s\mc{M}_U^{(n)}/sj_!f^s \mc{M}_U^{(n)}$. Call this filtration $A_\bullet$:
\begin{equation}
    \label{eq: filtration 1}
    A_\bullet := \text{ filtration on }j_!\mc{M}_U \text{ induced from }P_\bullet.
\end{equation}
\vspace{1mm}

\noindent
{\bf Filtration 2:} As explained in \S\ref{sec: the maximal extension functor}-\S\ref{sec: geometric Jantzen filtrations}, the maximal extension $\Xi_f \mc{M}_U = \coker s^1(n)$ has a monodromy filtration, which induces a geometric Jantzen filtration $J_{!,\bullet}$ on $\ker(s: \Xi_f \mc{M}_U \rightarrow \Xi_f \mc{M}_U)$. This filtration can be described in terms of powers of $s$ acting on $\Xi_f\mc{M}_U$ \eqref{eq: monodromy filtrations as powers of s}:
\begin{equation}
    \label{eq: filtration 2}
    J_{!,i}:= \ker s \cap \im s^{-i},
\end{equation}
where it is taken that $\im s^i = \Xi_f \mc{M}_U$ for $i \leq 0$

\vspace{2mm}
The two copies of $j_! \mc{M}_U$ in \eqref{eq: s1n round two} can be identified as follows. Because the canonical map $\mathrm{can}$ \eqref{eq: canonical map} is $s$-invariant, $sj_!f^s \mc{M}_U^{(n)}$ is the kernel of the composition 
\[
j_! f^s \mc{M}_U^{(n)} \xrightarrow{\mathrm{can}} j_+f^s\mc{M}_U^{(n)} \xrightarrow{q} \coker(s^1(n))=\Xi_f\mc{M}_U,
\]
where $q$ is the quotient of $j_+f^s\mc{M}_U^{(n)}$ by $\im s^1(n)$. Hence the canonical map descends to an injective map on the quotient, 
\begin{equation}
    \label{eq: can bar}
    \overline{\mathrm{can}}: j_!f^s \mc{M}_U^{(n)}/s j_! f^s \mc{M}_U^{(n)} \rightarrow \Xi_f \mc{M}_U.
\end{equation}
By construction, $\im \overline{\mathrm{can}} = \ker (s: \Xi_f\mc{M}_U \rightarrow \Xi_f \mc{M}_U)$. In this way, $\overline{\mathrm{can}}$ provides an isomorphism between the two copies of $j_! \mc{M}_U$ in \eqref{eq: s1n round two}. Moreover, because the geometric Jantzen filtration on $\ker(s: \Xi_f\mc{M}_U \rightarrow \Xi_f \mc{M}_U)$ can be realized in terms of the image filtration \eqref{eq: filtration 2}, it is clear that under $\overline{\mathrm{can}}$, Filtration \eqref{eq: filtration 1} and Filtration \eqref{eq: filtration 2} align. 

\begin{remark}
    The alignment of the two filtrations above is most easily seen through an example. In \cite{BohunRomanov}, a detailed $\mf{sl}_2(\C)$ example is given. We encourage the interested reader to view Figure 8 in that paper. 
\end{remark}

Now we specialise to the setting of \S\ref{sec: geometric Janten filtrations on Whittaker D-modules}. We will see that in these circumstances, Filtration 1 above aligns with the algebraic Jantzen filtration under the global sections functor, hence the global sections of the geometric Jantzen filtration (Filtration 2 above) also align with the algebraic Jantzen filtration. 

Set $Y=\widetilde{X}$ to be base affine space, $\widetilde{Q}$ to be the preimage under $\pi: \widetilde{X} \rightarrow X$ of an $N$-orbit admitting an $\eta$-twisted connection, and let $f_\varphi$ be the regular function on $\widetilde{X}$ given by \eqref{eq: condition of regular function} corresponding to a positive regular integral weight $\varphi \in \mf{h}^*$. As above, set $U:= f_\varphi^{-1}(\mathbb{A}^1 - \{0\})$. We have the following commuting inclusions of subvarieties:
\begin{equation}
    \label{eq: Qs in Us}
    \begin{tikzcd}
        U \arrow[rr, hookrightarrow, "j_U"] & & \widetilde{X} \\
        \widetilde{Q} \arrow[u, hookrightarrow, "k"] \arrow[urr, hookrightarrow, "i_{\widetilde{Q}}"] \arrow[rr, hookrightarrow, "j_{\widetilde{Q}}"] & &\overline{\widetilde{Q}} \arrow[u, hookrightarrow, "\overline{k}"]
    \end{tikzcd}
\end{equation}
The maps $k$ and $\overline{k}$ are closed immersions, and the maps $j_U$ and $j_{\widetilde{Q}}$ are open immersions. Applying the filtration construction above to the $\mc{D}_U$-module $k_!\mc{O}_{\widetilde{Q}, \eta} = k_+ \mc{O}_{\widetilde{Q}, \eta}$, we obtain from Filtration 1 a filtration of the $\mc{D}_{\widetilde{X}}$-module $j_{U!}f_\varphi^sk_! \mc{O}_{\widetilde{Q}, \eta}^{(n)}$ by pulling back the ``powers of s'' filtration along the canonical map. By the projection formula \cite[\S4]{D-modulesnotes}, 
\begin{equation}
    \label{eq: pushing commutes with deformation}
    j_{U!}f_\varphi^s k_! \mc{O}_{\widetilde{Q}, \eta}^{(n)} = i_{\widetilde{Q}!} f_\varphi^s \mc{O}_{\widetilde{Q}, \eta}^{(n)}.
\end{equation}
Hence under the equivalence \eqref{eq: two notions of monodromicity}, we obtain a filtration of the corresponding $\widetilde{\mc{D}}$-module $\pi_\bullet i_{\widetilde{Q}!} f_\varphi^s \mc{O}_{\widetilde{Q}, \eta}^{(n)}$. 

For $\lambda \in \h^*$, denote by 
\begin{equation}
    \label{eq: deformed standard D-module}
    \left( \pi_\bullet i_{\widetilde{Q}!} f_\varphi^s \mc{O}_{\widetilde{Q}, \eta}^{(n)}\right)_{\widehat{\lambda}}
\end{equation} 
the subsheaf of $\pi_\bullet i_{\widetilde{Q}!} f_\varphi^s \mc{O}_{\widetilde{Q}, \eta}^{(n)}$ annihilated by a power of $\mf{m}_\lambda \widetilde{\mc{D}} \subset \widetilde{\mc{D}}$, where $\mf{m}_\lambda \subset U(\mf{h})$ is the ideal defined in Remark \ref{rem: relationship to TDOs}. The subsheaf \eqref{eq: deformed standard D-module} is $\widetilde{\mc{D}}$-stable, and its global sections have the structure of a $\widetilde{U}$-module which is annihilated by the same power of $\mf{m}_\lambda \subset \widetilde{U}$ \cite[\S3.3]{BBJantzen}. In particular, for regular integral antidominant $\lambda \in \h^*$, the global sections of \eqref{eq: deformed standard D-module} can be considered as a $U(\mf{g})$-module with generalised infinitesimal character $\chi_\lambda$ \cite[\S3.1]{BBJantzen}, where $\chi_\lambda$ is as in \eqref{eq: inf char for g}. In fact, the global sections of the $\widetilde{\mc{D}}$-module \eqref{eq: deformed standard D-module} are a deformed Whittaker module.  
 \begin{lemma}
 \label{lem: global sections of deformed d-module are deformed whittaker}
Let $\lambda \in \mf{h}^*$ be a regular integral antidominant weight, $Q = C(w^C) \subset X$ a Bruhat cell corresponding to a longest coset representative in $C \in W_\eta \backslash W$, and $\widetilde{Q} = \pi^{-1}(Q) \subset \widetilde{X}$ the corresponding union of $N$-orbits in base affine space. For any $n \in \Z_{>0}$, 
\[
\Gamma\left(X, \left(\pi_\bullet i_{\widetilde{Q} !} f_\varphi^s\mc{O}_{\widetilde{Q}, \eta}^{(n)}\right)_{\widehat{\lambda}}\right) \cong M_{A^{(n)}}(w^C\lambda,\eta)
\]
as $(U(\mf{g}),A^{(n)})$-modules, where $A=\C[[s]]$, $A^{(n)}=A/s^n=\C[s]/s^n$, and $M_{A^{(n)}}(w^C\lambda,\eta)$ is the deformed Whittaker module (Definition \ref{def: deformed standard Whittaker}) with respect to the algebra homomorphism $\psi:U(\mf h)\to A^{(n)}$ mapping  $h\in \h$ to $\varphi(h)s.$ 
\end{lemma}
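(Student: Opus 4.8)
## Proof proposal for Lemma \ref{lem: global sections of deformed d-module are deformed whittaker}

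The plan is to exhibit both sides as finitely generated flat — hence free — $A^{(n)}$-modules which agree after reduction modulo $s$, and then conclude by Nakayama's lemma. Write $\mc{N}^{(n)} := \left(\pi_\bullet i_{\widetilde{Q}!} f_\varphi^s \mc{O}_{\widetilde{Q},\eta}^{(n)}\right)_{\widehat{\lambda}}$ for the $\widetilde{\mc{D}}$-module in question. Since $f_\varphi^+ I^{(n)}$ is free of rank one over $\mc{O}_U \otimes_\C A^{(n)}$, the sheaf $i_{\widetilde{Q}!} f_\varphi^s \mc{O}_{\widetilde{Q},\eta}^{(n)}$ is flat over $A^{(n)}$ by \eqref{eq: pushing commutes with deformation}, and the subsheaf on which a power of $\mf{m}_\lambda \widetilde{\mc{D}}$ acts trivially inherits this flatness. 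Because $\lambda$ is regular, reduction modulo $s$ recovers the undeformed sheaf with \emph{strict} infinitesimal character, $\mc{N}^{(n)} \otimes_{A^{(n)}} \C \cong \left(\pi_\bullet i_{\widetilde{Q}!}\mc{O}_{\widetilde{Q},\eta}\right)_\lambda$, whose global sections are $M(w^C\lambda, \eta)$ by Lemma \ref{lem: realising standard Whittakers on base affine space}.

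The second step records the behaviour of the center $S(\mf{h}) = U(\mf{h}) \subset \widetilde{\mc{D}}$ on the deformed sheaf. Differentiating the defining property \eqref{eq: condition of regular function} of $f_\varphi$ shows that each $H_0 \in \Lie H$, acting as the vector field on $\widetilde{X}$ coming from the right $H$-action, satisfies $H_0 \cdot f_\varphi = \varphi(H_0)\, f_\varphi$; hence on $f_\varphi^s \mc{O}_{\widetilde{Q},\eta}^{(n)}$ the action of $S(\mf{h})$ differs from its action on $\mc{O}_{\widetilde{Q},\eta}$ exactly by the shift $H_0 \mapsto H_0 + \varphi(H_0)\, s$. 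Since $\lambda$ is antidominant and regular, the global sections functor is exact on the relevant category, so $M^{(n)} := \Gamma(X, \mc{N}^{(n)})$ is flat over the Artinian ring $A^{(n)}$ with $M^{(n)} \otimes_{A^{(n)}} \C \cong M(w^C\lambda, \eta)$, and it is finitely generated over $U(\mf{g}) \otimes_\C A^{(n)}$. The shift computation identifies the $U(\mf{h})$-action on $M^{(n)}$ with the map $U(\mf{h}) \to A^{(n)}$ that sends $h$ to the $(w^C\lambda)$-value plus $\varphi(h)\, s$ — precisely the deformation direction $\psi$ in the statement.

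The third step produces the comparison map. Lift a generating $\eta$-Whittaker vector of $M(w^C\lambda,\eta)$ to an $\eta$-Whittaker vector $\underline{w} \in M^{(n)}$; this is possible because $M^{(n)} \twoheadrightarrow M(w^C\lambda,\eta)$ and the relevant section is supported near $\widetilde{Q}$, where $\mc{O}_{\widetilde{Q},\eta}$ is the exponential twist forcing the $\eta$-equivariance that survives the deformation. Using the shift computation together with the action of $Z(\mf{l}_\eta)$, one checks that $\underline{w}$ is killed by $X - \eta(X)$ for $X \in \mf{n}$ and by $z - \bigl(\chi_\eta^{w^C\lambda - \rho + \rho_\eta} + \varphi\circ p_\eta\bigr)(z)$ for $z \in Z(\mf{l}_\eta)$; these are exactly the relations defining $M_{A^{(n)}}(w^C\lambda,\eta)$ in Definition \ref{def: deformed standard Whittaker}, so they yield a $(U(\mf{g}), A^{(n)})$-bimodule homomorphism $\phi: M_{A^{(n)}}(w^C\lambda,\eta) \to M^{(n)}$.

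Finally I would conclude. Reducing modulo $s$, the map $\phi$ becomes an isomorphism by Lemma \ref{lem: realising standard Whittakers on base affine space} (together with Proposition \ref{prop: properties of costandards} and Proposition \ref{prop: properties of standard Whittakers}), hence $\phi$ is surjective by Nakayama's lemma since both modules are finitely generated over the local ring $A^{(n)}$. Both modules are flat, so $\ker\phi$ is flat with $\ker\phi \otimes_{A^{(n)}} \C = 0$, whence $\ker\phi = 0$ and $\phi$ is an isomorphism. I expect the main obstacle to be the annihilator computation for $\underline{w}$ in the third step: isolating the \emph{deformed} $Z(\mf{l}_\eta)$-infinitesimal character and verifying that it is shifted by $\varphi\circ p_\eta$ precisely as in Definition \ref{def: deformed standard Whittaker}. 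When $\eta = 0$ this is the computation underlying \cite[\S3]{BBJantzen}; for general $\eta$ one can either repeat that argument while tracking the exponential twist on $\widetilde{Q}$, or deduce it from Theorem \ref{thm: deformed Vermas go to standard Whittakers} by first carrying out the analogous identification over the local PID $\C[[s]]$ and then reducing modulo $s^n$.
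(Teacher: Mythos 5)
Your proposal is correct in outline but takes a genuinely different route from the paper. The paper proves the lemma by induction on $n$: it computes
$\operatorname{Ext}^1_{\widetilde{\mc{D}}}\bigl(\mc{M}^{(1)}_\lambda,\mc{M}^{(n)}_\lambda\bigr)$ and $\operatorname{Ext}^1_{U(\g)}\bigl(M^{(1)}_\lambda,M^{(n)}_\lambda\bigr)$, identifies both with $\operatorname{Ext}^1_{U(\h)}\bigl(A^{(1)}_{\lambda+\rho},A^{(n)}_{\lambda+\rho}\bigr)$ (using an $\eta$-twisted Langlands classification on the geometric side and tensor-hom adjunction plus the Harish-Chandra isomorphism for $Z(\mf{l}_\eta)$ on the algebraic side), and then observes that both the $(n+1)$-st sheaf and the $(n+1)$-st module are the iterated extension classified by the same element $\iota_*(\varphi)$. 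You instead build an explicit comparison map out of the induced module and conclude by Nakayama (which here is painless since $s$ is nilpotent in $A^{(n)}$, so flatness plus an isomorphism mod $s$ does force an isomorphism). Your approach is more hands-on and makes the deformation direction visible via the computation $H_0\cdot f_\varphi=\varphi(H_0)f_\varphi$, which is exactly the right way to see where $\psi(h)=\varphi(h)s$ comes from; the paper's approach trades this for a uniqueness statement that sidesteps any direct computation with the generator.

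The gap is the one you flag yourself, and it is the real content of the lemma: to get the map $\phi$ from the universal property of $M_{A^{(n)}}(w^C\lambda,\eta)$ you must show that the lifted generator $\underline{w}$ satisfies $z\cdot\underline{w}=\bigl(\chi^{w^C\lambda-\rho+\rho_\eta}_\eta+\varphi\circ p_\eta\bigr)(z)\,\underline{w}$ for $z\in Z(\mf{l}_\eta)$, and your step 2 only pins down the action of the center $S(\h)\subset\widetilde{U}$, not of $Z(\mf{l}_\eta)\subset U(\g)$. These are related by the Harish-Chandra homomorphism $p_\eta$ only after you know that the $\eta$-Whittaker vectors of the top $\h^\eta$-weight space form a free rank-one $A^{(n)}$-module on which $Z(\mf{l}_\eta)$ acts through the character determined by the $S(\h)$-action via $\tau_\eta\circ p_\eta$ (this is where regularity of $w^C\lambda-\rho+\rho_\eta$ enters, and it is precisely the identification \eqref{eq: second ext agreement} in the paper's argument). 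Also beware your proposed fallback: deducing the annihilator computation from Theorem \ref{thm: deformed Vermas go to standard Whittakers} presupposes an identification of $\Gamma\bigl(X,\mc{N}^{(n)}\bigr)$ with $\overline{\Gamma}_\eta$ of a deformed Verma module, i.e.\ a geometric incarnation of the Backelin functor, which is not available in the paper and would be at least as hard as the lemma itself. With the $Z(\mf{l}_\eta)$-eigenvalue verified directly (by the $\tau_\eta\circ p_\eta$ compatibility), your argument closes.
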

\begin{proof} The case $n=1$ is Theorem \ref{thm: matching of global sections undeformed}. An  adaptation of the proof in \cite[\S4]{Romanov} for $\widetilde{\mc{D}}$-modules can be used to prove the lemma for $n>1$.  Alternatively, the statement follows from the fact that both objects are uniquely determined as iterated extensions of the $n=1$ case. We expand on this perspective below. 

To ease notation, set 
\[
\mc{M}_\lambda^{(n)} := \left(\pi_\bullet i_{\widetilde{Q} !} f_\varphi^s\mc{O}_{\widetilde{Q}, \eta}^{(n)}\right)_{\widehat{\lambda}} \text{ and } M_\lambda^{(n)}:= M_{A^{(n)}}(w^C\lambda,\eta).
\]
Using adjunctions of $\mc{D}$-module functors and an $\eta$-twisted adaptation of the Langland's classification in \cite[\S2.6]{BBJantzen}, one can show that
\begin{equation}
    \label{eq: first ext agreement}
\operatorname{Ext}_{\widetilde{\mc{D}}}^1\left( \mc{M}_\lambda^{(1)}, \mc{M}_\lambda^{(n)} \right) = \operatorname{Ext}^1_{U(\h)}\left(A_{\lambda + \rho}^{(1)}, A_{\lambda + \rho}^{(n)}\right). 
\end{equation}
On the other hand, it follows from tensor-hom adjunction and the Harish-Chandra isomorphism for $\mc{Z}(\mf{l}_\eta)$ that 
\begin{equation}
\label{eq: second ext agreement}
\operatorname{Ext}^1_{U(\g)}\left(M_\lambda^{(1)}, M_\lambda^{(n)} \right) = \operatorname{Ext}^1_{U(\h)}\left(A_{\mu}^{(1)}, A_\mu^{(n)}\right)
\end{equation}
for $\mu = w^C \lambda - \rho + \rho_\eta$. Since both $\mu$ and $\lambda+\rho$ are regular, the second Ext groups in each equality \eqref{eq: first ext agreement} and \eqref{eq: second ext agreement} can be canonically identified. 

Pushforward along the natural $U(\h)$-module homomorphism $\iota:A^{(1)}_{\mu}\to A^{(n)}_{\mu}$ yields a map
\[
\iota_*:\operatorname{Ext}^1_{U(\mf h)}(A^{(1)}_{\mu}, A^{(1)}_{\mu})\to \operatorname{Ext}^1_{U(\mf h)}(A^{(1)}_{\mu}, A^{(n)}_{\mu}).
\]
Using the Koszul resolution of $A_\mu^{(1)}$, the domain of $\iota_*$ may be identified with $\h^*$. Now, both $\mc{M}_\lambda^{(n+1)}$ and $M_\lambda^{(n+1)}$ arise in \eqref{eq: first ext agreement} and \eqref{eq: second ext agreement} as the extensions corresponding to the element $\iota_*(\varphi)$. 

\end{proof}
A similar statement holds for deformed costandard objects, and these are compatible with filtrations.

From Lemma \ref{lem: global sections of deformed d-module are deformed whittaker}, we can see that the geometric Jantzen filtration of \S\ref{sec: geometric Jantzen filtrations} aligns with the algebraic Jantzen filtration of Definition \ref{def: Jantzen filtration of a standard Whittaker}. Specifically, we have shown that we can realise the geometric Jantzen filtration on the standard $(\mc{D}_{\widetilde{X}}, N, \eta)$-module 
$i_{\widetilde{Q}!} \mc{O}_{\widetilde{Q}, \eta}$ 
as the filtration obtained by pulling back the ``powers of $s$" filtration \eqref{eq: powers of s filtration} along the canonical map between deformed standard and costandard $(\mc{D}_{\widetilde{X}}, N, \eta)$-modules and then descending to the quotient 
\[
i_{\widetilde{Q}!} f_\varphi^s \mc{O}_{\widetilde{Q}, \eta}^{(n)} / s i_{\widetilde{Q}!} f_\varphi^s \mc{O}_{\widetilde{Q}, \eta}^{(n)} \simeq i_{\widetilde{Q}!} \mc{O}_{\widetilde{Q}, \eta}.
\]
By specifying a regular, antidominant and integral weight $\lambda \in \mf{h}^*$ and taking global sections, we obtain a filtration on the corresponding standard Whittaker module $M(w^C\lambda, \eta)$, which is given by pulling back the ``powers of $s$" filtration along the canonical map between standard and costandard deformed Whittaker modules. This is exactly how the algebraic Jantzen filtration is defined. By varying the $N$-orbit $Q=C(w^C)$, we obtain the result for all standard Whittaker modules $M(\mu, \lambda)$ corresponding to regular integral weights $\mu \in \mf{h}^*$. 
\section{Geometric proof of the Jantzen conjecture}
\label{sec: Proof of the Jantzen Conjecture}
We now show how the Jantzen conjecture for Whittaker modules follows from the theory of mixed twistor $\mc D$-modules.
\subsection{Background on mixed twistor modules}
\label{sec: background on mixed twistor modules}
We collect important properties of algebraic mixed twistor modules that we will need, following \cite{mochizukiMixedTwistorDmodules2015}.
\begin{itemize}
\item For a smooth complex variety $Y$, we denote by $\MTM(Y)$ the abelian category of (algebraic) mixed twistor $\mc D$-modules on $Y.$ 
\item For $w\in \Z,$ there is an abelian semisimple subcategory $\MT(Y,w)\subset \MTM(Y)$ of polarizable pure twistor $\mc D$-modules of weight $w.$ 
\item
A mixed twistor $\mc D$-module $M\in \MTM(Y)$ is equipped with an increasing filtration $W_\bullet$, called the \emph{weight filtration}, such that the associated graded $\operatorname{Gr}_w^W(M)\in \MT(Y,w)$ is pure of weight $w.$
\item Any morphism of mixed twistor modules is strict with respect to the weight filtration.
\item For $k\in\frac{1}{2}\mathbb{Z}$ the Tate twist $(k)$ on $\MTM(Y)$ is an automorphism the which shifts the weight filtration $W_wM(k)=W_{w-2k}M$ and maps $\MT(Y,w)$ to $\MT(Y,w-2k)$.
\item There is an exact realisation functor to $\mc D$-modules
$$\realmtm=\realmtm^{\lambda_0}: \MTM(Y)\to \mc{M}_{hol}(\mc{D}_Y).$$
The definition involves the choice of a generic $\lambda_0\in \C$ which we fix here.
\item All holonomic $\mc D$-modules that lift to $\MTM$ are hence equipped with a weight filtration. Moreover, all morphisms that lift to $\MTM$ are strict for these weight filtrations.
\item Denote by $\mc{M}_{hol}(\mc{D}_Y)_{\operatorname{ss}}\subset \mc{M}_{hol}(\mc{D}_Y)$ the subcategory of semisimple holonomic $\mc D$-modules. Then, for $w\in \Z$, the realisation functor restricts to a essentially surjective functor
$$\realmtm:\MT(X,w)\to \mc{M}_{hol}(\mc{D}_Y)_{\operatorname{ss}}.$$ 
\item The six operations for $\mc D$-modules
lift to $\MTM$ and commute with the realisation functor $\realmtm.$
\item Let $f:Y \rightarrow \A^1$ be a function on a smooth variety $Y$ and $j:U=f^{-1}(0)\to Y.$ Beilinson's maximal extension functor $\Xi_f$ for $\mc D$-modules, see Section \ref{sec: the maximal extension functor}, admits a lift
$$\Xi_f^{(a)}:\MTM(U)\to \MTM(X).$$
We obtain a lift for each $a\in\Z$ and the different lifts differ by a Tate twist.
\item The monodromy filtrations on $\Xi_f^{(a)}(M)$, $j_!(M)$ and $j_+(M)$ for $M\in \MTM(U)$ coincide with the respective weight filtrations. The associated graded of the monodromy filtration is hence a sum of pure twistor modules and, after applying $\realmtm,$ a semisimple holonomic $\mc D$-module.
\end{itemize}
\subsection{Geometric proof of the Jantzen conjectures} 
\label{sec: proof of the Jantzen conjecture} We explain how the constructions for $\mc D$-modules on flag varieties, see Section \ref{sec: D-modules}, lift to mixed twistor $\mc D$-modules and show how this implies the Jantzen conjecture Theorem \ref{thm: Jantzen conjecture Whittaker}.

For each longest coset representative $w^C$ for $C\in W_\eta\backslash W$ we obtain the $\mc D$-module
$$\widetilde{\mc{M}}_{C,\eta}=i_{\widetilde C(w^C)!}\pi^+\overline{\eta}^!\exp \in \mc{M}_{hol}(\mc{D}_{\widetilde C(w^C)}) \text{ for } \exp\in\mc{M}_{hol}(\mc{D}_{\A^1})_{\operatorname{ss}}.$$
 Here $\overline{\eta}: C(w^C) \rightarrow \A^1$ is defined as in Remark \ref{rem: the exponential D-module}. By lifting the exponential $\mc D$-module we obtain a lift as a mixed twistor module
$$\underline{\widetilde{\mc{M}}}_{C,\eta}=i_{\widetilde C(w^C)!}\pi^+\overline{\eta}^!\underline{\exp}(-\tfrac{1}{2}\ell(w))\in\MTM(\widetilde C(w^C)) \text{ for } \underline{\exp}\in\MT(\A^1,0).$$
The shift is chosen so that $\overline{\eta}^!\underline{\exp}(-\tfrac{1}{2}\ell(w^C))$ is pure of weight zero.

The weight filtration $W_\bullet$ on $\underline{\widetilde{\mc{M}}}_{C,\eta}$ (and thereby on $\widetilde{\mc{M}}_{C,\eta}=\realmtm(\underline{\widetilde{\mc{M}}}_{C,\eta})$) agrees with the monodromy filtration $J_{!,\bullet}$. Hence, the associated graded pieces of the filtration are semisimple.
Now let $\lambda\in \mf h^*$ be anti-dominant and regular. By Lemma \ref{lem: realising standard Whittakers on base affine space} passing to global sections and $\lambda$-invariants we obtain the standard Whittaker module
$$M(w^C\lambda, \eta)=\Gamma(\widetilde{X}, \widetilde{\mc{M}}_{C,\eta})_\lambda$$
and by Section \ref{sec: relationship between algebraic and geometric Janten filtrations}, the Jantzen filtration $M(w^C\lambda, \eta)_\bullet$ is induced by the monodromy filtration $J_{!,\bullet}$. Hence, the associated graded $\gr_i M(w^C\lambda, \eta)$ of the Jantzen filtration is semisimple which proves the first part of Theorem \ref{thm: Jantzen conjecture Whittaker}.

The second part of Theorem \ref{thm: Jantzen conjecture Whittaker} is about strictness of embeddings of standard Whittaker-modules with respect to Jantzen filtration. To prove this, we show that these embeddings lift to maps of mixed twistor modules.
\begin{lemma}
Let $w^{C_1},w^{C_2}$  be longest coset representatives for $C_1,C_2\in W_{\eta}\backslash W$ with $w^{C_1}<w^{C_2}$. Then the embedding $M(w^{C_1}\lambda,\eta)\hookrightarrow M(w^{C_2}\lambda,\eta)$ lifts to an element in
$$\Hom_{\MTM(\widetilde{X})}(\underline{\widetilde{\mc{M}}}_{C_1,\eta}, \underline{\widetilde{\mc{M}}}_{C_2,\eta}(\tfrac{1}{2}(\ell(w^{C_2})-\ell(w^{C_1})))).$$
\end{lemma}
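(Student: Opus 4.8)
\emph{Approach.} The plan is to lift the embedding through the realisation functor $\realmtm$, reducing — by the lifted six operations — to the computation of a costalk on a single $N$-orbit, in exact parallel with Beilinson--Bernstein's treatment of Verma modules \cite[\S2.6, \S5]{BBJantzen}; the exponential twist will be carried along formally.

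First I would fix the $\mc{D}$-module picture. Since $\lambda$ is antidominant and regular, $\Gamma(\widetilde{X},-)_\lambda$ identifies $\widetilde{\mc{M}}_{C_i,\eta}=i_{\widetilde{C}(w^{C_i})!}\mc{O}_{\widetilde{C}(w^{C_i}),\eta}$ with $M(w^{C_i}\lambda,\eta)$ (Lemma \ref{lem: realising standard Whittakers on base affine space}), so the given embedding corresponds to a nonzero morphism $\phi\colon\widetilde{\mc{M}}_{C_1,\eta}\to\widetilde{\mc{M}}_{C_2,\eta}$ of $\widetilde{\mc{D}}$-modules. Because $\overline{\Gamma}_\eta$ is a quotient functor (\cite[Cor.~2]{AriasBackelin}, see \S\ref{sec: Jantzen conjecture via category O}) and $\Hom$ between Verma modules is at most one-dimensional, $\Hom_{\mc{N}}(M(w^{C_1}\lambda,\eta),M(w^{C_2}\lambda,\eta))$ is at most one-dimensional. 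Hence it suffices to produce a nonzero morphism $\Phi\colon\underline{\widetilde{\mc{M}}}_{C_1,\eta}\to\underline{\widetilde{\mc{M}}}_{C_2,\eta}(k)$ in $\MTM(\widetilde{X})$, $k=\tfrac{1}{2}(\ell(w^{C_2})-\ell(w^{C_1}))$, whose realisation stays nonzero after applying $\Gamma(\widetilde{X},-)_\lambda$; such a morphism is then forced to be a nonzero multiple of the embedding.

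Next I would compute the $\MTM$-Hom by adjunction. Write $\underline{\widetilde{\mc{M}}}_{C_i,\eta}=i_{\widetilde{C}(w^{C_i})!}\,\underline{\mc{L}}_i$ with $\underline{\mc{L}}_i=\pi^+\overline{\eta}^!\underline{\exp}(-\tfrac{1}{2}\ell(w^{C_i}))$ pure of weight $0$, and set $\mc{L}_i=\realmtm(\underline{\mc{L}}_i)$. Since the six operations lift to $\MTM$ and commute with $\realmtm$, the adjunction $(i_{\widetilde{C}(w^{C_1})})_!\dashv(i_{\widetilde{C}(w^{C_1})})^!$ for the locally closed immersion $i_{\widetilde{C}(w^{C_1})}$ gives
\[
\Hom_{\MTM(\widetilde{X})}\bigl(\underline{\widetilde{\mc{M}}}_{C_1,\eta},\underline{\widetilde{\mc{M}}}_{C_2,\eta}(k)\bigr)\cong\Hom_{D^b\MTM(\widetilde{C}(w^{C_1}))}\bigl(\underline{\mc{L}}_1,\,(i_{\widetilde{C}(w^{C_1})})^!\underline{\widetilde{\mc{M}}}_{C_2,\eta}(k)\bigr),
\]
and the identical formula holds for $\mc{D}$-modules, compatibly with $\realmtm$. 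Since $w^{C_1}<w^{C_2}$ the orbit $\widetilde{C}(w^{C_1})$ lies in $\overline{\widetilde{C}(w^{C_2})}$, and as $!$-restriction along a closed immersion is left $t$-exact the right-hand side equals $\Hom_{\MTM(\widetilde{C}(w^{C_1}))}\bigl(\underline{\mc{L}}_1,\mathcal{H}^0((i_{\widetilde{C}(w^{C_1})})^!\underline{\widetilde{\mc{M}}}_{C_2,\eta})(k)\bigr)$, so everything reduces to identifying the costalk cohomology $\mathcal{H}^0((i_{\widetilde{C}(w^{C_1})})^!\underline{\widetilde{\mc{M}}}_{C_2,\eta})$.

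The key claim, and the main obstacle, is that this costalk cohomology is pure of weight $2k=\ell(w^{C_2})-\ell(w^{C_1})$ and is isomorphic to $\underline{\mc{L}}_1(-k)=\pi^+\overline{\eta}^!\underline{\exp}(-\tfrac{1}{2}\ell(w^{C_2}))$. Granting this, the displayed $\Hom$ becomes $\End_{\MTM}(\underline{\mc{L}}_1)$, which $\realmtm$ carries isomorphically onto $\End_{\mc{D}}(\mc{L}_1)$ (the same algebra on both sides, identity to identity), and the morphism $\Phi$ corresponding under adjunction to $\id_{\underline{\mc{L}}_1}$ realises to the $\mc{D}_{\widetilde{X}}$-morphism attached to $\id_{\mc{L}_1}$, which generates $\Hom_{\widetilde{\mc{D}}}(\widetilde{\mc{M}}_{C_1,\eta},\widetilde{\mc{M}}_{C_2,\eta})$ over the central $U(\mf{h})$ and so specialises at $\lambda$ to the nonzero canonical map $\phi$ — as required. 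To prove the key claim I would work directly inside $\MTM$, \emph{not} via Riemann--Hilbert, since the exponential $\mc{D}$-module of Remark \ref{rem: the exponential D-module} is irregular: using the $N$-orbit stratification of $\overline{\widetilde{C}(w^{C_2})}$, the affineness of Bruhat cells together with the $H$-bundle $\pi$ and the map $\overline{\eta}$, and the explicit description of standard $\eta$-twisted Harish--Chandra sheaves from \cite{TwistedSheaves, Romanov}, one computes $\mathcal{H}^0$ of the costalk — it is again a rank-one exponential-twisted object pulled back from $C(w^{C_1})$ — while purity of $\underline{\mc{L}}_2$ and the weight estimates for $!$-restrictions of pure twistor modules (as in \cite[\S4]{BBJantzen}) pin its weight to $\dim\widetilde{C}(w^{C_2})-\dim\widetilde{C}(w^{C_1})=\ell(w^{C_2})-\ell(w^{C_1})$ (the $\dim H$ contributions cancelling). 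This boundary computation, performed for the irregular exponential twist and entirely within the twistor formalism, is the point that \cite[\S5]{BBJantzen} treats only briefly in the Verma case, and is where essentially all the work lies.
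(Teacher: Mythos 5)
Your strategy --- reduce via the adjunction $i_{\widetilde{C}(w^{C_1})!}\dashv i_{\widetilde{C}(w^{C_1})}^!$ to a costalk computation on the small stratum --- is genuinely different from the paper's proof, and it has a real gap at exactly the point you flag as ``where essentially all the work lies.'' The key claim, that $\mathcal{H}^0\bigl(i_{\widetilde{C}(w^{C_1})}^!\underline{\widetilde{\mc{M}}}_{C_2,\eta}\bigr)$ is rank one and pure of weight $\ell(w^{C_2})-\ell(w^{C_1})$, cannot be extracted from ``purity of $\underline{\mc{L}}_2$ and weight estimates for $!$-restrictions of pure twistor modules'': the object you are $!$-restricting is $i_{\widetilde{C}(w^{C_2})!}\underline{\mc{L}}_2$, which is a $!$-extension and hence mixed, not pure; and even for pure inputs the standard weight yoga ($f_!,f^*$ preserve weights $\leq w$, while $f_+,f^!$ preserve weights $\geq w$) gives only one-sided bounds after composing a $!$-pushforward with a $!$-restriction --- never purity. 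Purity of this costalk at a stratum of arbitrary codimension in $\overline{\widetilde{C}(w^{C_2})}$ is essentially equivalent in strength to what you are trying to prove (in the $\eta=0$ case it amounts to purity of the top local cohomology of the big cell near a deep Schubert stratum), so as written the argument is incomplete at its core. There is also a secondary loose end: even granting the key claim, you must check that $\Gamma(\widetilde{X},-)_\lambda\circ\realmtm$ applied to your $\Phi$ is nonzero, not merely that $\Phi\neq 0$; the appeal to ``generates the Hom over the central $U(\mf{h})$'' needs the Hom to be free of rank one over the relevant central subalgebra with nonvanishing specialisation at $\lambda$.

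The paper avoids the deep-stratum costalk entirely: it inducts on length, reducing to adjacent cosets $w^{C_1}=w$, $w^{C_2}=ws$ for a simple reflection $s$, and then uses the product decomposition $N_{ws}\times P_s/N\xrightarrow{\sim}\widetilde{C}(ws)\cup\widetilde{C}(w)$ to factor both twistor modules as external products $\underline{\mc O}_{N_{ws},\eta}\boxtimes(-)$; the desired morphism is $\id\boxtimes\iota$ with $\iota$ the known codimension-one, $\eta=0$ embedding from Beilinson--Bernstein. This simultaneously splits off the irregular exponential factor and reduces the only genuine geometric input to the rank-one case. If you wish to salvage your route, the costalk purity would itself have to be proved by the same kind of inductive rank-one reduction, at which point the adjunction framing buys little.
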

\begin{proof} The proof is similar to \cite[Corollary 5.3.4]{BB81}. 

By induction, we may can assume that $w^{C_1}=w$ and $w^{C_2}=ws$ for a simple reflection $s$. We now explain how to further reduce to the case $w=e$ and $\eta=0$ which is shown in \cite[Corollary 5.3.4]{BB81}.
Let $N_w=N\cap wsN^-(ws)^{-1}$ and $P_s\subset G$ the parabolic subgroup associated to $s.$ Then multiplication yields an isomorphism
$$N_{ws}\times P_s/N\stackrel{\sim}{\to}\widetilde{C}(ws)\cup \widetilde{C}(w), (n,pN/N)\mapsto nwpN/N$$
With respect to this isomorphism
we obtain for $C_1',C_2'\in W_\eta\backslash W$ the cosets of $e$ and $s$, respectively,
\begin{align*}
    \underline{\widetilde{\mc{M}}}_{C_1,\eta}&=\underline{\mc O}_{ N_{ws},\eta}\boxtimes \underline{\widetilde{\mc{M}}}_{C_1',0}\text{ and }
    \underline{\widetilde{\mc{M}}}_{C_2,\eta}=\underline{\mc O}_{ N_{ws},\eta}\boxtimes \underline{\widetilde{\mc{M}}}_{C_2',0}.
\end{align*}
Here, the first factor $\underline{\mc O}_{ N_{ws},\eta}$ is the pullback of the exponential twistor module under $\eta: N_{ws}\to \mathbb{A}^1$. The second factors are the pushforwards of the constant twistor modules. We obtain our desired morphism as $\id\boxtimes \iota$, where $0\neq \iota$ is the embedding of $\underline{\widetilde{\mc{M}}}_{C_1',0}$ into $\underline{\widetilde{\mc{M}}}_{C_2',0}(\tfrac{1}{2}).$
\end{proof}
Passing to $\lambda$-invariant global sections and using the strictness of the weight filtration for maps of mixed twistor modules, we hence obtain
$$M(w^{C_1}\lambda,\eta)_{i}= M(w^{C_1}\lambda,\eta)\cap M(w^{C_2}\lambda,\eta)_{i-\ell(w^{C_2})+\ell(w^{C_1})}$$
which is the second part of Theorem \ref{thm: Jantzen conjecture Whittaker}.
\bibliographystyle{alpha}
\bibliography{Jantzen}
\end{document}